\title
{Asymptotic Euler-Maclaurin formula over lattice polytopes}
\author
{
Tatsuya Tate\thanks{Research partially supported by 
JSPS Grant-in-Aid for Scientific Research (No. 21740117).}\\
Graduate School of Mathematics \\
Nagoya University \\
Furo-cho, Chikusa-ku, \\
Nagoya, 464--8602, Japan\\
Email: tate@math.nagoya-u.ac.jp}
\date{\empty}
\newcommand{\supp}{{\operatorname{supp\,}}}
\newcommand{\ispa}[1]{\langle \,#1 \,\rangle } 
\newcommand{\comp}{{\scriptstyle \circ}}
\newcommand{\ol}{\overline}
\newcommand{\mf}{\mathfrak}
\newcommand{\mb}{\mathbb}
\newcommand{\ccal}{\mathcal{C}}
\newcommand{\dcal}{\mathcal{D}}
\newcommand{\ecal}{\mathcal{E}}
\newcommand{\fcal}{\mathcal{F}}
\newcommand{\hcal}{\mathcal{H}}
\newcommand{\scal}{\mathcal{S}}
\newcommand{\vcal}{\mathcal{V}}
\newcommand{\re}{{\rm Re}\,}
\newtheorem{theorem}{{\bf Theorem}}[section]
\newtheorem{maintheo}{{\bf Theorem}}
\newtheorem{cor}[theorem]{{\bf Corollary}}
\newtheorem{lem}[theorem]{{\bf Lemma}}
\newtheorem{prop}[theorem]{{\bf Proposition}}
\newenvironment{example}{\medskip\noindent{\it Example:\/} }{\medskip}
\newenvironment{rem}{\medskip\noindent{\it Remark:\/} }{\medskip}
\newtheorem{defin}[theorem]{{\bf Definition}}
\numberwithin{equation}{section}
\newenvironment{proof}%
{\def\psymbol{{\it Proof. }\enspace}
\noindent{\psymbol}}%
\def\qed{$\square$}
\begin{document}

\maketitle

\renewcommand{\labelenumi}{{\rm (\arabic{enumi})}}
\setcounter{section}{-1}

\begin{abstract}
Formulas for the Riemann sums over lattice polytopes determined by the lattice points in the polytopes 
are often called Euler-Maclaurin formulas. 
An asymptotic Euler-Maclaurin formula, by which we mean an asymptotic expansion formula 
for Riemann sums over lattice polytopes, was first obtained by Guillemin-Sternberg \cite{GS}. 
Then, the problem is to find a concrete formula for each term of the expansion. 
In this paper, an asymptotic Euler-Maclaurin formula of the Riemann sums over general lattice polytopes is given. 
The formula given here is an asymptotic form of the so-called local Euler-Maclaurin formula of Berline-Vergne \cite{BeV}. 
For Delzant polytopes, our proof given here is independent of the local Euler-Maclaurin formula. 
Furthermore, a concrete description of differential operators which appear in each term of the 
asymptotic expansion for Delzant lattice polytopes is given. 
By using this description, when the polytopes are Delzant lattice, a concrete formula for each term of the expansion in two dimension 
and a formula for the third term of the expansion in arbitrary dimension are given. 
\end{abstract}

\section{Introduction}
\label{INTRO}

In this paper, we consider asymptotic behavior of the Riemann sums over lattice polytopes, 
\begin{equation}
\label{RS0}
R_{N}(P;\varphi):=\frac{1}{N^{\dim(P)}}\sum_{\gamma \in (NP) \cap \mb{Z}^{m}}\varphi(\gamma/N), 
\end{equation}
where $P$ is a lattice polytope in $\mb{R}^{m}$, which means that each vertex has integer coordinates, 
and $\varphi$ is a smooth function on $P$. 
Formulas for $R_{N}(P;\varphi)$, which are often called Euler-Maclaurin formulas, 
are extensively investigated in combinatorics and geometry of toric varieties. 
If we take $\varphi=1$, the Riemann sum $R_{N}(P;1)$ is reduced to 
the so-called Ehrhart polynomial
\[
E_{P}(N):=\sharp (NP) \cap \mb{Z}^{m}=N^{\dim(P)}R_{N}(P;1), 
\]
which is closely related to the Todd class of a toric variety corresponding to the polytope $P$. 
In this context, geometry of toric varieties is a suitable and powerful tool to analyze the function $E_{P}(N)$. 
Indeed, as in \cite{F}, one can show that $E_{P}(N)$ is a polynomial in $N$ by using the Hirzebruch-Riemann-Roch theorem. 
The problems concerning (exact) Euler-Maclaurin formulas and Ehrhart polynomials are investigated by various authors, 
for example \cite{CS}, \cite{BeV}, \cite{BrV}, \cite{KP}. 
See \cite{KSW2} and references therein for various results on these topics. 

Before explaining some of the results closely related to the present paper, we state one of our theorems. 
\begin{maintheo}
\label{lAEMT}
Let $P$ be a lattice polytope in $\mb{R}^{m}$.
For each face $f$ of $P$ and non-negative integer $n$ with $\dim (f) \geq \dim(P)-n$, 
there exists a homogeneous differential operator $D_{n}(P;f)$ of order $n-\dim(P)+\dim(f)$ with rational constant coefficients 
which involves derivatives only in directions orthogonal to the face $f$ such that for each 
smooth function $\varphi$ on $P$, we have the following asymptotic Euler-Maclaurin formula$:$
\begin{equation}
\label{lAEM}
R_{N}(P;\varphi) \sim \sum_{n \geq 0}N^{-n}
\sum_{f \in \fcal(P),\,\dim(f) \geq \dim(P)-n}\int_{f}D_{n}(P;f)\varphi \quad (N \to \infty), 
\end{equation}
where $\fcal(P)$ denotes the set of faces of $P$. The integration in the right hand side is performed with respect to the measure 
on the affine hull $\ispa{f}$ of $f$ which is the parallel translation of the Lebesgue measure 
on the subspace $L(f)$ parallel to $\ispa{f}$ defined by the lattice $L(f) \cap \Lambda$. 
\end{maintheo}
In this section, we explain some of the previous works on the Euler-Maclaurin formula closely 
related to Theorem \ref{lAEMT} and mention other results obtained in the present paper.

An exact Euler-Maclaurin formula for Delzant polytopes was originally obtained by 
Khovanskii-Pukhlikov \cite{KP}, and Brion-Vergne \cite{BrV} generalized it to simple polytopes 
without using the theory of toric varieties. 
One of their results can be stated as (assuming that $P$ is a Delzant polytope)
\begin{equation}
\label{eEM}
R_{N}(P;\varphi)={\rm Todd}(P;\partial/N\partial h)
\left.\int_{P_{h}}\varphi(x)\,dx\right|_{h=0}, 
\end{equation}
where $\varphi$ is a polynomial, $h=(h_{1},\ldots,h_{d}) \in \mb{R}^{d}$ is a small parameter 
with $d$ the number of faces of $P$ of codimension one, 
${\rm Todd}(z)=\frac{z}{1-e^{-z}}$ is an analytic function around the origin, called the Todd function, 
\[
{\rm Todd}(P;\partial/N\partial h)=\prod_{i=1}^{d}{\rm Todd}(\partial/N\partial h_{i})
\]
is a differential operator (of infinite order), and when the polytope $P$ is given by $P=\{x\,;\,\ispa{u_{i},x} \geq c_{i},i=1,\ldots,d\}$, 
then $P_{h}=\{x\,;\,\ispa{u_{i},x} \geq c_{i}-h_{i},i=1,\ldots,d\}$. 
Note that Brion-Vergne \cite{BrV} obtained the same formula for simple polytopes 
with a modification of the differential operator ${\rm Todd}(P;\partial/N\partial h)$.

In \cite{BeV}, Berline-Vergne obtained an effective formula for $R_{N}(P;\varphi)$ (still $\varphi$ being assumed to be polynomial), 
which they call a local Euler-Maclaurin formula. This formula is of the form (setting $N=1$ for simplicity)
\begin{equation}
\label{eLEM}
R_{1}(P;\varphi)=\sum_{f}\int_{f}D(P,f)\varphi,
\end{equation}
where the sum runs over all faces $f$ of $P$, $D(P,f)$ is a differential operator (of infinite order) with rational constant 
coefficients on $\mb{R}^{m}$ which involves derivatives only in directions perpendicular to the face $f$. 
One of remarkable points is that the formula \eqref{eLEM} of Berline-Vergne holds for any rational polytopes, 
which means that each vertex of the polytope has rational coordinates. 
They constructed a meromorphic function $\mu(\mf{a})$ for any affine rational polyhedral cone $\mf{a}$
and use a sort of inclusion-exclusion property (which is called a valuation property) of $\mu$ to 
show that it is analytic near the origin, and they define the symbol of the operator $D(P,f)$ by using $\mu$. 

The operators $D_{n}(P;f)$ in our formula \eqref{lAEM} is, by definition, the homogeneous parts of the operator $D(P,f)$ in \eqref{eLEM}. 
Thus, one can think the formula \eqref{lAEM} as an asymptotic form of the local Euler-Maclaurin formula \eqref{eLEM} due to Berline-Vergne. 
As we point out in Subsection \ref{Heu}, one can deduce \eqref{lAEM} by using one of results in \cite{BeV} directly and formally. 
However, the method mentioned in Subsection \ref{Heu} is formal, and we use a different method to prove Theorem \ref{lAEMT}. 
Moreover, any transparent formula for the homogeneous parts of $D(P,f)$ is, in general, not known. 
We will see that, when $P$ is a Delzant lattice polytope, the operators $D_{n}(P;f)$ can be, to some extent, 
expressible concretely (Definition \ref{opcone}, Theorem \ref{BVvsT}). Note that our formula \eqref{lAEM} is valid 
for any smooth function $\varphi$ on $P$. Our construction of the operator $D_{n}(P;f)$ makes us to obtain 
concrete formula for Delzant lattice polytopes in two dimension (Corollary \ref{2Dasympt}). 
A part of our construction of these operators $D_{n}(P;f)$ uses an induction procedure, 
and they are still complicated. This complication comes from the ``angles'' at each face of the polytopes, 
and hence it would be rather natural. The complication involving the ``angles'' is embodied 
in an integration by parts procedure.

In this paper, by the name asymptotic Euler-Maclaurin formula, we mean formulas of asymptotic expansion of 
the Riemann sum $R_{N}(P;\varphi)$. 
In one dimension ($m=1$ and $P=[0,1]$), the following asymptotic Euler-Maclaurin formula is well know. 
\begin{equation}
\label{cAEM}
\begin{split}
\frac{1}{N}\sum_{k=1}^{N}\varphi(k/N)
&=R_{N}([0,1];\varphi)-\frac{\varphi(0)}{N} \\
 \sim \int_{0}^{1}\varphi(x)\,dx+&
\frac{1}{2N}(\varphi(1)-\varphi(0))+
\sum_{n \geq 1}\frac{(-1)^{n-1}B_{n}}{(2n)!}
\left(
\varphi^{(2n-1)}(1)-\varphi^{(2n-1)}(0)
\right)N^{-2n}, 
\end{split}
\end{equation}
where $\varphi$ is any smooth function on $[0,1]$, and $b_{n}$ are the coefficients of the 
Taylor expansion of the Todd function: 
\[
{\rm Todd}(-z)=\sum_{n=0}^{\infty}\frac{b_{n}}{n!}z^{n}, 
\]
and $B_{n}=(-1)^{n-1}b_{2n}$ ($n \geq 1$) are the Bernoulli numbers.

A higher dimensional analogue of \eqref{cAEM} was given by Guillemin-Sternberg (\cite{GS}). 
Namely, Guillemin-Sternberg obtained the asymptotic Euler-Maclaurin formula of the form (assuming that $P$ is Delzant)
\begin{equation}
\label{AEM}
R_{N}(P;\varphi) \sim {\rm Todd}(P;\partial/\partial Nh)
\left.\int_{P_{h}}\varphi(x)\,dx\right|_{h=0}. 
\end{equation}
This formula also holds true for simple lattice polytopes under a modification. 
Note that this formula is, at least its appearance, similar to the Brion-Vergne formula \eqref{eEM}. 
The proof of \eqref{AEM} given in \cite{GS} is different from the proof of \eqref{eEM} given in \cite{BrV}, 
and it does not use geometry of toric varieties. There are some applications of the above formula 
for spectral analysis on toric K\"{a}hler manifolds. In fact, in \cite{GW}, 
the asymptotic Euler-Maclaurin formula obtained in \cite{GS}, combined 
with an asymptotic expansion of `twisted Mellin transform' studied in \cite{W}, is applied 
to analyze a spectral measure on $\mb{C}^{m}$ which is, in a GIT setting, related to 
the pair $(X,L)$ where $X$ is a toric manifold corresponding to a 
Delzant polytope and $L$ is a Hermitian line bundle on $X$. (See also \cite{Ch} where 
the same spectral measure as in \cite{GW} is discussed.)

One more asymptotic Euler-Maclaurin formula was brought to us by Zelditch \cite{Z}. 
The formula obtained in \cite{Z} is stated as 
\begin{equation}
\label{BB}
R_{N}(P;\varphi) \sim \int_{P}\varphi\,dx +\frac{1}{2N}\int_{\partial P}\varphi(x)\,d\sigma +\sum_{n \geq 2}
N^{-n}\int_{P}\ecal_{n}(P)\varphi (x)\,dx, 
\end{equation}
where $P$ is a Delzant lattice polytope, $\ecal_{n}(P)$ is a differential operator (of finite order), and $d\sigma$ is the Leray measure 
on the boundary $\partial P$. 
In \cite{Z}, Zelditch introduced the notion of Bergman-Bernstein measures (this name is taken from \cite{T}) 
and obtained its asymptotic expansion. Then, integration (over the toric K\"{a}hler manifold corresponding to the 
Delzant polytope $P$) of the asymptotic expansion yields the formula \eqref{BB}. 
In \cite{Z}, the formula \eqref{BB} is called a `metric expansion' to distinguish it 
from the Euler-Maclaurin formula of the form \eqref{AEM},  
since the differential operators $\ecal_{n}(P)$ depend on the choice of a Hermitian 
metric on a line bundle over the toric manifold. But, the Riemann sum itself does not depend on 
such a metric. A point is that such a metric dependence would be disappeared 
after an integration by parts. Indeed, in \cite{Z}, the second term is computed by using an 
integration by parts identity due to Donaldson \cite{D}. 

As is mentioned in \cite{Z}, 
comparison of asymptotic Euler-Maclaurin formula and the metric expansion of the form \eqref{BB}
will give some further identities in the lower order terms. 
One of our motivation is to give another asymptotic Euler-Maclaurin formula which is computable to some extent. 
Indeed, we have a concrete formula for the third term of the expansion when the polytope is Delzant. 
See Corollary \ref{thirdT} in Subsection \ref{Comp3}. 
Thus, if one can compute the differential operator $\ecal_{2}(P)$ in \eqref{BB} in terms of curvatures, 
then one will obtain an integration by parts identity in the third term in \eqref{BB}, which might be useful 
to geometry of toric manifolds.

An idea of proof of Theorem \ref{lAEMT} is to reduce the problem to that for unimodular cones, 
which are cones generated by a part of an integral basis, by using a subdivision of a rational cones into 
unimodular cones (see \cite{F}, Section 2.6) and a canonical decomposition of 
the characteristic functions of polytopes (see the equations \eqref{Eu}, \eqref{decoC} in Section \ref{RPC}). 
The asymptotic Euler-Maclaurin formula of Riemann sums over unimodular cones can be deduced by a method  in \cite{GS} 
(see also \cite{AM}, \cite{KSW}, \cite{KSW2}). However, we deduce it here by a quite different method. 
This method is rather similar to the Bergman-Bernstein approach in \cite{Z}. 
But, we work on unimodular cones instead of polytopes themselves. 
Thus, we use the Szasz measures introduced in Section \ref{SZASZF} 
instead of Bernstein or Bergman-Bernstein measures discussed in \cite{Z} or \cite{T}. 
More concretely, an asymptotic property of the Szasz functions is used to show Proposition \ref{uC} in Section \ref{ASCONE}, 
which is an asymptotic Euler-Maclaurin formula for unimodular cones. 
Proposition \ref{uC} can be deduced directly from Theorem 3.2 in \cite{GS}, and  
one can consider that the Proposition \ref{uC} is a starting point for the subsequent sections. 
Thus, one might be able to perform similar computations in sections after Section \ref{ASCONE} at least for 
simple polytopes, by using Theorem 3.3 in \cite{GS} instead of Proposition \ref{uC}. 
However, the asymptotic behavior of Szasz functions would be a general interest in its own right.  
Furthermore, there would be a possibility of using a version of Szasz functions to get 
asymptotics of the Riemann sum over general rational cones without using a subdivision of cones into unimodular cones, 
if one could resolve a problem on `rare events' along with the lines in \cite{T}. 
(See also {\it Remark} after the proof of Theorem \ref{AEMTh} on this point.) 
In one dimension, we compute explicitly each term of the expansion for twisted Riemann sum 
by using this approach. This computation uses the twisted version of the Szasz function, and it shows that 
coefficients in the Taylor expansion of the `twisted' Todd function can be represented by the Stirling numbers of the second kind 
(in particular, the equation \eqref{Bom}), 
which is a generalization of a well-known formula among Bernoulli numbers, Catalan numbers and the Stirling numbers of the 
second kind (see \eqref{Catalan} or \cite{GKP}). Thus, this approach might have 
some advantages also in higher dimension. 
These are the reasons why we use the approach with the Szasz functions in this paper.

We here mention that an asymptotic expansion of the Szasz function was first obtained in \cite{Fe}. 
In \cite{Fe}, Feng also obtained an asymptotic formula of the Riemann sum over the positive orthant $\mb{R}^{m}_{+}$ 
in the same strategy as ours. 
However, concrete formulas for each term of the 
asymptotic expansion are not discussed fully in \cite{Fe}. We give an explicit formula for 
each term of the expansion of the Szasz function in Section \ref{SZASZF}. 
(The main purpose in \cite{Fe} was to give a non-compact analogue of Bergman-Bernstein approximation in \cite{Z}. 
Indeed the Szasz function, defined in Section \ref{SZASZF} in the present paper, is closely related to 
the Bergman kernel for the Bargmann-Fock space as explained in \cite{Fe}.)

We close Introduction with some comments on the organization of this paper. 
We collect some of the notation used in this paper in Subsection \ref{NOT1}, 
and then, we review and define the Berline-Vergne operators $D_{n}(P;f)$ in Subsection \ref{BVOP}. 
As we mentioned above, a heuristic argument to find a formula \eqref{lAEM} is given in Subsection \ref{Heu}. 
In Subsection \ref{RBVDO}, we prove a uniqueness theorem on the expression of each term of the asymptotic 
expansion of the form \eqref{lAEM} (Theorem \ref{BerVer}). 
In Section \ref{SZASZF}, we study asymptotic behavior of Szasz functions. 
Some computations for the twisted Riemann sum in one dimension is given in Subsection \ref{ONED}. 
In Subsections \ref{DEFSF}, \ref{AsymSF}, we define and study Szasz functions and their asymptotic behavior. 
Section \ref{ASCONE} is devoted to the study of asymptotic behavior of the Riemann sums over unimodular cones.  
First, we prove an asymptotic expansion formula (Proposition \ref{uC}) by using 
the asymptotic property of the Szasz functions studied in Section \ref{SZASZF}. 
Asymptotic formula obtained in Proposition \ref{uC} 
uses differential operators in direction transversal to each face of the unimodular cone. 
Then, one can perform further integration by parts. This is done in Subsection \ref{IBP}. 
In Subsection \ref{DOC}, we define differential operators obtained by 
the integration by parts procedure discussed in Subsection \ref{IBP} which is used to 
renormalize each term of the expansion in Proposition \ref{uC}. 
The fact that the operators so defined coincide with the Berline-Vergne operators 
is proved also in this subsection (Theorem \ref{BVvsT}). 
In Section \ref{DRCONE}, we prove the asymptotic Euler-Maclaurin formula for general pointed rational 
cones by using the Berline-Vergne operators and the subdivision of pointed rational cones 
into a finite number of unimodular cones. 
Finally, in Section \ref{RPC}, we prove our main Theorem \ref{lAEMT}, which is reformulated in Theorem \ref{AEMTh}, 
and a uniqueness result (Theorem \ref{UofP}), and give some explicit computation.

\vspace{10pt}

\noindent{\bf Acknowledgment.}\hspace{5pt} The author would like to thank to 
Dr. Micheal Stolz who informed him about the work of O.~Szasz \cite{S}. 
He would also like to thank to Prof. Steve Zelditch for his helpful comments on the 
earlier version of the paper.

\section{Berline-Vergne operators and heuristic argument}
\label{BVOPH}

In this section, we review the symbol of differential operators defined in \cite{BeV}. 
Then, we give a heuristic argument to obtain an asymptotic Euler-Maclaurin formula of the form \eqref{lAEM}. 
Furthermore, we deduce a uniqueness theorem on expression of coefficients in asymptotic Euler-Maclaurin formula  
of the form \eqref{lAEM}.

\subsection{Notation}
\label{NOT1}

Let $X$ be a finite dimensional vector space over $\mb{R}$, and let $\Lambda$ be a lattice in $X$. 
Such a pair $(X,\Lambda)$ is called a rational vector space. 
The dual space $X^{*}$ of a rational space $(X,\Lambda)$ is a rational space 
with the dual lattice $\Lambda^{*}$ of $\Lambda$. 
A point $x \in X$ is said to be rational if $qx \in \Lambda$ for some $q \in \mb{Z} \setminus \{0\}$. 
The set of rational points in $X$ is denoted by $X_{\mb{Q}}$. 
A basis of $\Lambda$ over $\mb{Z}$ is called an integral basis of $\Lambda$. 
For each rational vector space $(X,\Lambda)$, we fix a Lebesgue measure on $X$ normalized so that 
the measure of the fundamental domain of the action of $\Lambda$ on $X$ has measure $1$. 
A subspace $L$ in $X$ is said to be rational if $L \cap \Lambda$ is a lattice in $L$. 
We fix a Lebesgue measure on a rational subspace $(L,L \cap \Lambda)$ as above. 
An affine subspace $A$ is said to be rational if $A$ is a parallel translation of 
a rational subspace. (Note that a rational affine subspace $A$ is allowed to be a translation of 
a rational subspace by a point which is not rational.) 
For a rational affine subspace $A$, we fix a Lebesgue measure 
on $A$ which is a translation of the fixed Lebesgue measure on the rational subspace 
parallel to $A$. 
Any integration on a subset in a rational affine subspace is performed by using the Lebesgue measure 
normalized in this way. For each vector $u \in X$, let $\nabla_{u}$ denote the derivative 
in the direction $u$. 

For each non-empty subset $S$ in $X$, let $L(S)$ be the subspace spanned by the vectors $y-x$ with $x,y \in S$, 
which is parallel to the affine hull, denoted by $\ispa{S}$, of $S$. 
If $S \subset X_{\mb{Q}}$, then $L(S)$ is a rational subspace in $X$. 
Let $L$ be a rational subspace in a rational space $(X,\Lambda)$. 
The natural projection from $X$ onto $X/L$ is denoted by $\pi_{L}:X \to X/L$. 
If $L$ is a subspace in $X$, let $L^{\perp} \subset X^{*}$ denote the annihilator of $L$. 
The quotient space $X/L$ of $X$ by a rational subspace $L$ is again a rational space with the lattice $\pi_{L}(\Lambda)$. 

An inner product $Q$ on a rational space $(X,\Lambda)$ is said to be rational 
if $Q(x,y) \in \mb{Q}$ for each $x,y \in X_{\mb{Q}}$. 
Let $Q$ be a rational inner product on $(X,\Lambda)$. 
The rational inner product on $(X^{*},\Lambda^{*})$ 
induced by the inner product $Q$ on $X$ is also denoted by $Q$. 
Let $L$ be a subspace in $X$. 
The orthogonal complement of $L$ in $X$ is denoted by $L^{\perp_{Q}}$. 
Note that we have a natural identification $(X/L)^{*} \cong L^{\perp}$. 
The orthogonal projection from $X^{*}$ onto $(X/L)^{*} \cong L^{\perp}$ is denoted by 
$p_{L}:X^{*} \to (X/L)^{*}$. When $L$ is rational, the rational space $X/L$ is equipped with 
the rational inner product obtained by identifying $X/L$ with $L^{\perp_{Q}}$. 
Note that, with this identification, the lattice $\pi_{L}(\Lambda)$ of $X/L$ is identified with the 
orthogonal projection $p_{L}(\Lambda)$ of $\Lambda$, where the orthogonal projection 
from $X$ onto $L^{\perp_{Q}}$ is also denoted by $p_{L}:X \to L^{\perp_{Q}}$, which 
is different from the lattice $L^{\perp_{Q}} \cap \Lambda$ in $L^{\perp_{Q}}$. 

A subset $P$ in a rational space $(X,\Lambda)$ is called a rational polyhedron 
if $P$ is an intersection of a finite number of half spaces each of which is bounded by a rational affine hyperplane. 
Let $P$ be a rational polyhedron. Then the set of faces of $P$ is denoted by $\fcal(P)$, 
and, for non-negative integer $k$, the set of faces of $P$ of dimension $k$ is denoted by $\fcal(P)_{k}$. 
We set $\vcal(P)=\fcal(P)_{0}$, the set of vertices of $P$. 
A face of codimension one is called a facet. 
For each $f \in \fcal(P)$, we set $\pi_{f}=\pi_{L(f)}$, the natural projection from $X$ onto $X/L(f)$. 
When, a rational inner product on $X$ is fixed, we set $p_{f}=p_{L(f)}$, the orthogonal 
projection from $X^{*}$ onto $(X/L(f))^{*}$. 
A rational polyhedron $C$ in $X$ is called a rational cone if $C$ is a cone generated 
by a finite number of elements in $\Lambda$. Note that a rational cone $C$ might contain straight lines. 
The largest subspace contained in the rational cone $C$ is $C \cap (-C)$, which is a rational subspace in $X$. 
If $C \cap (-C)=\{0\}$, then the rational cone $C$ is said to be pointed. 
If a rational cone $C$ is generated by a subset of 
an integral basis of $\Lambda$, then $C$ is said to be unimodular. 
A subset $\mf{a}$ of $X$ is called a rational affine cone if $\mf{a}$ is of the form $\mf{a}=s +C$ where 
$s \in X_{\mb{Q}}$ and $C$ is a rational cone. If $C$ is pointed, then $\mf{a}$ is also said to be pointed.

\subsection{The Berline-Vergne operators}
\label{BVOP}

In this subsection, we recall the construction of operators given in \cite{BeV}. 
Let $(X,\Lambda)$ be a rational space with a rational inner product $Q$. 
For each rational polyhedron $P$ in $X$, we set 
\begin{equation}
\label{SandI}
S(P)(\xi)=\sum_{\gamma \in P \cap \Lambda}e^{\ispa{\xi,\gamma}},\quad 
I(P)=\int_{P}e^{\ispa{\xi,x}} 
\end{equation}
if the sum and the integral converge absolutely, where $\xi \in X^{*}$. 
These functions are defined as meromorphic functions on $X^{*}$. 
Let $f$ be a face of a rational polyhedron $P$ in $X$. 
Let $C_{P}(f)$ be the cone generated by the vectors of the form $y-x$ with $y \in P$, $x \in f$. 
This is actually a rational cone in $X$ with $C_{P}(f) \cap (-C_{P}(f))=L(f)$. 
Then, the pointed affine cone $\mf{t}(P,f):=\pi_{f}(\ispa{f}+C_{P}(f))$ in $X/L(f)$ is called the transverse cone of $P$ along $f$. 

For any rational quotient $W=X/L$ of $X$ by a rational subspace $L$, 
let $\ccal(W)$ denote the set of all rational affine cones in $W$. 
Let $\hcal(W^{*})$ denote the ring of analytic functions with rational Taylor coefficients defined in a neighborhood of $0$ in $W^{*}$ 
with respect to an (and hence all) integral basis of the dual lattice of the lattice $\pi_{L}(\Lambda)$ in $W=X/L$. 

Then, it is shown in Theorem 20 in \cite{BeV} that there is a unique family of maps $\mu_{W}$, indexed by rational quotient spaces $W$ of $X$, 
from $\ccal(W)$ to $\hcal(W^{*})$ such that the following conditions hold: 
\begin{enumerate}
\item If $W=\{0\}$, then $\mu_{W}(\{0\})=1$. 
\item If the affine cone $\mf{a} \in \ccal(W)$ contains a straight line, then $\mu_{W}(\mf{a})=0$. 
\item For any $\mf{a} \in \ccal(W)$, one has 
\begin{equation}
\label{ind1}
S(\mf{a})(\xi)=\sum_{F \in \fcal(\mf{a})}
\mu_{W/L(F)}(\mf{t}(\mf{a},F))(\xi)I(F)(\xi),\quad \xi \in W^{*}.   
\end{equation}
\end{enumerate}
Moreover, one of main theorems in \cite{BeV} is that, for each rational polyhedron $P$ in $W=X/L$, 
one has 
\begin{equation}
\label{ind2}
S(P)(\xi)=\sum_{f \in \fcal(P)}\mu_{X/L(f)}(\mf{t}(P,f))(\xi)I(f)(\xi), \quad \xi \in W^{*}. 
\end{equation}
(See Theorem 21 in \cite{BeV}.) 
Note that the functions $\mu_{X/L(f)}$ in \eqref{ind2} (and also in \eqref{ind1}) 
is the lift to $W^{*}$ of functions defined on $(W/L(f))^{*}$ through the orthogonal projection $p_{f}:W^{*} \to (W/L(f))^{*}$. 
Let $\mf{a}$ be a pointed rational affine cone in the rational quotient $X/L$ of $X$. 
For any non-negative integer $k$, let $\mu_{X/L}^{k}(\mf{a})$ denote the homogeneous 
polynomial of degree $k$ on $(X/L)^{*}$ which is the homogeneous part of 
the Taylor expansion of the analytic function $\mu_{X/L}(\mf{a})$ near $0 \in (X/L)^{*}$. 
We set $\mu_{X}^{k}(\mf{a})=p_{L}^{*}\mu_{X/L}^{k}(\mf{a})$, which is a homogeneous 
polynomial of degree $k$ on $X^{*}$.

\begin{defin}
\label{opconeG}
Let $(X,\Lambda)$ be a rational space with a rational inner product $Q$. 
For any rational polyhedron $P$ in $X$, any face $f$ of $P$ and any non-negative integer $n$ such that $n-\dim(P)+\dim(f) \geq 0$, 
we define the homogeneous differential operator $D_{n}^{X}(P;f)$ on $X$ with rational constant coefficients 
of order $n-\dim(P)+\dim(f)$, which involves derivatives only in directions perpendicular to the subspace $L(f)$, 
as the differential operator whose symbol is given by 
$\mu_{X}^{n-\dim(P)+\dim(f)}(\mf{t}(P,f))=p_{f}^{*}\mu_{X/L(f)}^{n-\dim(P)+\dim(f)}(\mf{t}(P,f))$.
We call the operators $D_{n}^{X}(P;f)$ the Berline-Vergne operators. 
\end{defin}
We note that, when $C$ is a pointed rational cone in $X$ and $F$ is a face of $C$, 
then $\mf{t}(C,F)=\pi_{F}(C)$, and hence we have $D_{n}^{X}(C;F)=D_{n}^{X}(\pi_{F}(C);0)$. 
Let $P$ be a lattice polytope in $X$, which means that each vertex is an element in $\Lambda$, 
and let $f \in \fcal(P)$. Then, we have $\mf{t}(P,f)=\pi_{f}(v)+\pi_{f}(C_{P}(f))$ where $v \in f \cap \Lambda$. 
Since the function $\mu_{X/L(f)}$ is invariant under translation by elements in the lattice (Theorem 21 in \cite{BeV}), 
we have $D_{n}^{X}(P;f)=D_{n}^{X}(\pi_{f}(C_{P}(f));0)$.

\subsection{Heuristic arguments}
\label{Heu}

In this subsection, we give a heuristic argument to find the formula \eqref{lAEM} 
by using the result \eqref{ind2} in \cite{BeV}. 
Let $(X,\Lambda)$ be a rational space. Let $P$ be a lattice polytope in $X$. 
For simplicity, assume that $m:=\dim(P)=\dim(X)$. 
For each $f \in \fcal(P)$, we set $\mu(P,f):=p_{f}^{*}\mu_{X/L(f)}(\mf{t}(P,f))$ which 
is a meromorphic function on $X^{*}$ analytic in a neighborhood of the origin.  
Now let us compute the Riemann sum $R_{N}(P;\varphi)$ by using \eqref{ind2}. 
Let $\varphi$ be a smooth function on $P$. Since $P$ is compact, one may assume that $\varphi \in C_{0}^{\infty}(X)$. 
Normalize the Lebesgue measure $d\xi$ on $X^{*}$ so that it satisfy the Fourier inversion formula
\[
\varphi(x)=(2\pi)^{-m}\int_{X^{*}}e^{i\ispa{\xi,x}}\hat{\varphi}(\xi)\,d\xi,\quad 
\hat{\varphi}(\xi)=\int_{X}e^{-i\ispa{\xi,x}}\varphi(x). 
\] 
Inserting the above for $x=\gamma/N$ with $\gamma \in NP \cap \Lambda$ 
into the definition of $R_{N}(P;\varphi)$ and using the formula \eqref{ind2}, we have 
\[
R_{N}(P;\varphi)=\frac{1}{(2\pi N)^{m}}\sum_{f}\int_{X^{*}}
\mu(NP,Nf)(i\xi/N)I(Nf)(i\xi/N)\widehat{\varphi}(\xi)\,d\xi,  
\]
But, since $P$ is a lattice polytope, we have $\mu(NP,Nf)=\mu(P,f)$ (see \cite{BeV}, Remark 29). 
Changing the variable $x \mapsto x/N$, we have $I(Nf)(i\xi/N)=N^{\dim(f)}I(f)(i\xi)$. 
Thus we have 
\[
R_{N}(P;\varphi)=
\frac{1}{(2\pi N)^{m}}
\sum_{f}N^{\dim(f)}\int_{X^{*}}
\mu(P,f)(i\xi/N)I(f)(i\xi)\widehat{\varphi}(\xi)\,d\xi. 
\]
Formally, substituting the Taylor expansion 
\[
\mu(P,f)(i\xi/N)=\sum_{k \geq 0}\mu^{k}(\mf{t}(P,f))(i\xi)N^{-k}
\]
into the above formula, we could have 
\begin{equation}
\label{formal}
R_{N}(P;\varphi) \mbox{``$\sim$''}\sum_{n \geq 0}N^{-n}
\sum_{f \in \fcal(P)\,;\,\dim(f) \geq m-n}
\int_{f}D_{n}^{X}(P;f)\varphi, 
\end{equation}
where $D_{n}^{X}(P;f)$ is defined in Definition \ref{opconeG}. 
However, the above computation is formal because we do not know much about global properties of the functions $\mu(P,f)$. 
Even if we could prove the formula \eqref{formal} along with the method explained above, we do not know much about homogeneous parts of 
its Taylor expansion. 
One of our purposes in this paper is to give an effective formula for the operator $D_{n}^{X}(P;f)$ given in Definition \ref{opconeG}, 
at least for Delzant lattice polytopes, by a method different from the above strategy.

\subsection{A uniqueness property}
\label{RBVDO}

In this subsection, we discuss a uniqueness property of an expression of each term of the asymptotic 
expansion of $R_{N}(C;\varphi)$ for unimodular cones $C$. 
Let $C$ be a unimodular cone in a rational space $(X,\Lambda)$ with a rational inner product $Q$. 
Then, note that, for each face $F$ of $C$, we have $\mf{t}(C,F)=\pi_{F}(C)$.
Note also that, we give a rational inner product in each rational quotient space $X/L$ 
by identifying $X/L$ with $L^{\perp_{Q}}$.

\begin{theorem}
\label{BerVer}
Suppose that, for any rational space $(X,\Lambda)$ with a rational inner product $Q$, 
any rational subspace $L$ of $X$, any unimodular cone $C$ in $X/L$ and any non-negative integer $n$ such that 
$n \geq \dim(C)$, there exists a homogeneous differential operator $\dcal_{n}^{X}(C)$ on $X$ of order $n-\dim(C)$ with symbol $\nu_{n}^{X}(C)$ such that 
\begin{enumerate}
\item If $C \subset X/L$, then $\nu_{n}^{X}(C)=p_{L}^{*}\nu_{n}^{X/L}(C)$ where $p_{L}:X \to L^{\perp_{Q}} \cong (X/L)^{*}$ denote 
the orthogonal projection. 
\item If $C \subset X$ with $\dim(C) < \dim(X)$, then $\nu_{n}^{X}(C)=\!\,^{t}\iota_{C}^{*}\nu_{n}^{L(C)}(C)$, where $\,\!^{t}\iota_{C}:X^{*} \to L(C)^{*}$ 
is the transpose of the inclusion $\iota_{C}:L(C) \hookrightarrow X$. 
\item When $\dim(X)=0$, we have $\dcal_{0}^{X}(\{0\})=1$, $\dcal_{n}^{X}(\{0\})=0$ $(n \geq 1)$. 
When $\dim(X)=1$ and $C=\mb{R}_{+}u$ with a generator $u$ of $\Lambda$, we have $\dcal_{n}^{X}(C)=-\frac{b_{n}}{n!}\nabla_{u}^{n-1}$ $(n \geq 1)$. 
\item For any unimodular cone $C \subset X$, any $F \in \fcal(C)$, any $n \in \mb{Z}_{+}$ with $\dim(F) \geq \dim(C)-n$ 
and any Schwartz function $\varphi \in \scal(X)$ on $X$, the following holds$:$
\begin{equation}
\label{Ascone}
R_{N}(C;\varphi) \sim \sum_{n \geq 0}N^{-n}\sum_{F \in \fcal(C)\,;\,\dim(F) \geq \dim(C)-n}
\int_{F}\dcal_{n}^{X}(\pi_{F}(C))\varphi \quad (N \to \infty). 
\end{equation}
\end{enumerate}
Then, we have 
\begin{equation}
\label{BVft}
\nu_{n}^{X}(C)=\mu_{X}^{n-\dim(C)}(C)
\end{equation}
for any such $X$, $C$ and $n$ satisfying $n-\dim(C) \geq 0$. 
\end{theorem}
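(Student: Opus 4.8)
The strategy is an induction on $d=\dim C$, whose only external inputs are the one-dimensional Euler-Maclaurin formula \eqref{cAEM} and the uniqueness of asymptotic expansions in $N$. By property (1) together with the definition $\mu_{X}^{k}(\mf{a})=p_{L}^{*}\mu_{X/L}^{k}(\mf{a})$, it suffices to prove \eqref{BVft} when the cone lies in a rational space $X$ itself ($L=0$); and by property (2) together with the fact (see \cite{BeV}) that $\mu$ depends only on the linear span of the cone, one reduces further to $C$ full-dimensional in $X$, so $\dim X=d$. The cases $d=0$ and $d=1$ are exactly property (3): evaluating the Berline-Vergne recursion \eqref{ind1} for $\{0\}$ and for $\mf{a}=\mb{R}_{+}u\subset\mb{R}u$ (using $S(\mb{R}_{+})(\xi)=(1-e^{\xi})^{-1}$, $I(\mb{R}_{+})(\xi)=-\xi^{-1}$ and $I(\{0\})=1$) gives $\mu_{X}^{0}(\{0\})=1$, $\mu_{X}^{k}(\{0\})=0$ for $k\ge1$, and $\mu_{X}(\mb{R}_{+}u)(\xi)=-\sum_{n\ge1}\frac{b_{n}}{n!}\xi^{n-1}$, so that the operator with symbol $\mu_{X}^{n-1}(\mb{R}_{+}u)$ is $-\frac{b_{n}}{n!}\nabla_{u}^{n-1}$, as in (3).

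For the inductive step, fix $C$ full-dimensional in $X$ with $\dim X=d$ and assume \eqref{BVft} for every unimodular cone of dimension $<d$ in every rational quotient. For a face $F\in\fcal(C)$ with $F\ne\{0\}$ the transverse cone $\mf{t}(C,F)=\pi_{F}(C)$ is a unimodular cone in $X/L(F)$ of dimension $d-\dim F<d$, so by the inductive hypothesis and property (1) the operator $\dcal_{n}^{X}(\pi_{F}(C))$ coincides with the Berline-Vergne operator $D_{n}^{X}(C;F)$. Substituting into \eqref{Ascone} and splitting off the vertex $F=\{0\}$ (where $\pi_{\{0\}}(C)=C$, so its contribution is $(\dcal_{n}^{X}(C)\varphi)(0)$, present only when $n\ge d$) gives, for every $\varphi\in\scal(X)$,
\[
R_{N}(C;\varphi)\sim\sum_{n\ge0}N^{-n}\Bigl[(\dcal_{n}^{X}(C)\varphi)(0)+\sum_{\substack{F\in\fcal(C),\ F\ne\{0\}\\ \dim F\ge d-n}}\int_{F}D_{n}^{X}(C;F)\varphi\Bigr],
\]
with the first summand omitted for $n<d$. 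By uniqueness of asymptotic expansions the bracket equals the coefficient $a_{n}(\varphi)$ of $N^{-n}$ in $R_{N}(C;\varphi)$, which is intrinsic to $(C,\varphi)$ and, written as above, depends continuously on $\varphi\in\scal(X)$. Since a homogeneous constant-coefficient differential operator is determined by the functional $\varphi\mapsto(\,\cdot\,\varphi)(0)$ on $\scal(X)$, this determines $\dcal_{n}^{X}(C)$ uniquely; hence it remains only to check that the Berline-Vergne operators satisfy the same relation, i.e. that $a_{n}(\varphi)=\sum_{F\in\fcal(C)}\int_{F}D_{n}^{X}(C;F)\varphi$ for all $\varphi$ -- equivalently, that \eqref{Ascone} holds with $\dcal_{n}^{X}=D_{n}^{X}$ for this single cone $C$.

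This last point is the crux. Because $C$ is unimodular, a unimodular change of coordinates of $(X,\Lambda)$ -- under which $R_{N}$, the lattice and the operators $D_{n}^{X}(C;F)$ are all equivariant -- reduces us to $X=\mb{R}^{d}$, $\Lambda=\mb{Z}^{d}$, $C=\mb{R}_{+}^{d}$. For a product test function $\varphi(x)=\prod_{j=1}^{d}\varphi_{j}(x_{j})$ one has $R_{N}(\mb{R}_{+}^{d};\varphi)=\prod_{j}R_{N}(\mb{R}_{+};\varphi_{j})$, and the half-line form of \eqref{cAEM} gives $R_{N}(\mb{R}_{+};\varphi_{j})\sim\sum_{F'\in\fcal(\mb{R}_{+})}\int_{F'}\sum_{n}N^{-n}D_{n}^{\mb{R}}(\mb{R}_{+};F')\varphi_{j}$. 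Multiplying these expansions, and using $\fcal(\mb{R}_{+}^{d})=\{\mb{R}_{+}^{S}:S\subseteq\{1,\dots,d\}\}$ together with the factorization $\mu_{\mb{R}^{d}/\mb{R}^{S}}(\mb{R}_{+}^{S^{c}})=\prod_{j\notin S}\mu_{\mb{R}}(\mb{R}_{+})$ of the transverse-cone symbols -- which follows from \eqref{ind1} for $\mb{R}_{+}^{d}$ and the one-dimensional identity $\mu_{\mb{R}}(\mb{R}_{+})(\xi)=(1-e^{\xi})^{-1}+\xi^{-1}$ -- one obtains $R_{N}(\mb{R}_{+}^{d};\varphi)\sim\sum_{n}N^{-n}\sum_{F}\int_{F}D_{n}^{X}(C;F)\varphi$ for all product $\varphi$. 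Both sides are continuous functionals of $\varphi\in\scal(\mb{R}^{d})$ and products span a dense subspace, so the identity holds for all $\varphi$; this is \eqref{Ascone} with $\dcal_{n}^{X}=D_{n}^{X}$, and combined with the uniqueness above it yields $\dcal_{n}^{X}(C)=D_{n}^{X}(C;\{0\})$, i.e. $\nu_{n}^{X}(C)=\mu_{X}^{n-d}(C)$, completing the induction. The main obstacle is precisely this last step -- recovering the Riemann sum over the standard orthant from the Berline-Vergne operators -- which requires both the factorization of $\mu$ over the orthant and the density-plus-continuity passage from product test functions to arbitrary Schwartz functions; the remainder is bookkeeping over faces and dimensions.
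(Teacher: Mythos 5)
Your induction skeleton (reduce via (1) and (2) to a full-dimensional cone, settle $\dim X\le 1$ by (3), and use the inductive hypothesis to identify all non-vertex terms of \eqref{Ascone} with Berline--Vergne operators, so that only the vertex operator remains to be pinned down) is exactly the paper's, and the observation that a constant-coefficient operator is determined by $\varphi\mapsto (D\varphi)(0)$ is fine. The gap is in the step you yourself call the crux: proving that the Berline--Vergne operators satisfy \eqref{Ascone} for the given cone. Your argument for this reduces $C$ to the standard orthant by a unimodular change of coordinates and then factors everything over the coordinates. But the operators $D_{n}^{X}(C;F)$, and the functions $\mu_{X/L(F)}(\mf{t}(C,F))$, depend on the rational inner product $Q$ (through the orthogonal projections $p_{F}$ in the Berline--Vergne recursion), and a unimodular change of coordinates transports $Q$ to an inner product on $\mb{R}^{d}$ that is in general \emph{not} the standard one. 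The factorization $\mu_{\mb{R}^{d}/\mb{R}^{S}}(\mb{R}_{+}^{S^{c}})=\prod_{j\notin S}\mu_{\mb{R}}(\mb{R}_{+})$ holds only when the generators of the orthant are $Q$-orthogonal; for a general rational inner product the transverse-cone symbols contain cross terms (compare the paper's two-dimensional vertex operator \eqref{2Dcone2}, where $D_{2}(C;0)$ involves $c_{1}+c_{2}=Q(e_{1},e_{2})/Q(e_{2},e_{2})+Q(e_{1},e_{2})/Q(e_{1},e_{1})$, which vanishes only in the orthogonal case). Since the theorem asserts $\nu_{n}^{X}(C)=\mu_{X}^{n-\dim(C)}(C)$ for the \emph{given} $Q$, establishing the expansion for the standard inner product does not suffice. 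A secondary weakness: passing the asymptotic expansion from product test functions to all of $\scal(X)$ by ``continuity plus density'' is not automatic, because an asymptotic expansion is a statement uniform in $N$; you would need remainder bounds controlled by fixed Schwartz seminorms, which you do not supply.

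The paper avoids all of this by testing hypothesis (4) against a single well-chosen function: pick $\xi\in X^{*}$ with $\ispa{\xi,x}<0$ on $C$ and a Schwartz extension of $e_{\xi}|_{C}$. Then $S(C)(\xi/N)=N^{m}R_{N}(C;e_{\xi})$, and the Berline--Vergne recursion \eqref{ind1} together with $I(F)(\xi/N)=N^{\dim(F)}I(F)(\xi)$ gives an \emph{exact} expansion of $S(C)(\xi/N)$ in powers of $N$ whose coefficients are $\mu_{X}^{n-m+\dim(F)}(\pi_{F}(C))(\xi)I(F)(\xi)$. Equating these with the coefficients produced by \eqref{Ascone}, and using the inductive hypothesis on the faces $F\neq\{0\}$, yields $\nu_{n}^{X}(C)(\xi)=\mu_{X}^{n-m}(C)(\xi)$ directly, with no need to verify the Berline--Vergne expansion for general Schwartz functions. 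You should replace your orthant-factorization step by this exponential-test-function comparison; as written, your proof establishes the claim only for unimodular cones whose generators are $Q$-orthogonal.
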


\begin{proof}
First, we note that, the symbols of the Berline-Vergne operators satisfy the assumption (2) in the statement (Proposition 13 in \cite{BeV}). 

We prove the assertion by induction on the dimension of $X$. 
Consider the case where $\dim X=1$. Take a generator $u$ of the lattice $\Lambda$ and 
identify $u$ with $1$ in $\mb{Z}$. Let $C=\mb{R}_{+}u$. 
Then, as is computed in \cite{BeV}, we have 
\[
\mu_{X}(C)(\xi)=\frac{1}{\ispa{\xi,u}}+\frac{1}{1-e^{\ispa{\xi,u}}}=-\sum_{n=1}^{\infty}\frac{b_{n}}{n!}\ispa{\xi,u}^{n-1},\quad \xi \in X^{*}. 
\]
We also have $\mu_{\{0\}}(\{0\})=1$. 
From this, we have $\mu_{X}^{n-1}(C)(\xi)=-\frac{b_{n}}{n!}\ispa{\xi,u}^{n-1}$ ($n \geq 1$), $\mu_{X}^{0}(\{0\})=1$, $\mu_{X}^{n}(\{0\})=0$ ($n \geq 1$). 
By the assumption (3), this shows the assertion when $\dim(X)=1$. 

Next, assume that, for any rational space $(X,\Lambda)$ with $\dim(X) \leq m-1$, 
any unimodular cone $C$ in a rational quotient $X/L$ and any non-negative integer $n$ such that $n \geq \dim(C)$, 
the equation \eqref{BVft} holds. 
Let $X$ be an $m$-dimensional rational space, and let $C \subset X$ be a unimodular cone. 
If $\dim(C) < m$, then by the assumption (2) and the induction hypothesis, we have \eqref{BVft}. 
Thus, we assume that $\dim(C)=m$. 
Let $F \in \fcal(C)$. If $\dim(F)>0$, then, by the assumption (1), we have 
$\nu_{n}^{X}(\pi_{F}(C))=p_{F}^{*}\nu_{n}^{X/L(F)}(\pi_{F}(C))$. 
Since $\dim(X/L(F)) \leq m-1$ and $\pi_{F}(C)$ is a unimodular cone in $X/L(F)$, we can use 
the induction hypothesis, and hence the latter function coincides with $p_{F}^{*}\mu_{X/L(F)}^{n-m+\dim(F)}(\pi_{F}(C))=\mu_{X}^{n-m+\dim(F)}(\pi_{F}(C))$. 
To prove $\nu_{n}^{X}(C)=\mu_{X}^{n-m}(C)$ for $n \geq m$, 
take $\xi \in X^{*}$ such that $\ispa{\xi,x}<0$ for each $x \in C$. 
Then, for any $N >0$, we have 
\[
S(C)(\xi/N)=N^{m}R_{N}(C;e_{\xi}),\quad e_{\xi}(x)=e^{\ispa{\xi,x}}. 
\]
Note that there is a $\varphi \in \scal(X)$ such that $\varphi(x)=e_{\xi}(x)$ for $x \in C$. 
Thus, by the assumption (4), we have 
\begin{equation}
\label{ind11}
S(C)(\xi/N)\sim \sum_{n \geq 0}N^{m-n}\sum_{F \in \fcal(C)\,;\,\dim(F) \geq m -n}\nu_{n}^{X}(\pi_{F}(C))(\xi)I(F)(\xi) 
\end{equation}
as $N \to \infty$. 
By \eqref{ind1} and the identity $I(F)(\xi/N)=N^{\dim(F)}I(F)(\xi)$, we have
\[
S(C)(\xi/N)=\sum_{n \geq 0}N^{m-n}\sum_{F \in \fcal(C)\,;\,\dim(F) \geq m-n}\mu_{X}^{n-m+\dim(F)}(\pi_{F}(C))(\xi)I(F)(\xi) 
\]
for every sufficiently large $N$. 
Let $n \geq m$. By using the induction hypothesis, 
the coefficient of $N^{m-n}$ in the above can be written as 
\begin{equation}
\label{ind22}
\mu_{X}^{n-m}(C)(\xi)+\sum_{F \in \fcal(C)\,;\,0 \neq \dim(F) \geq m-n}\nu_{n}^{X}(\pi_{F}(C))(\xi)I(F)(\xi). 
\end{equation}
Equating \eqref{ind22} with the coefficient of $N^{m-n}$ in \eqref{ind11} shows $\mu_{X}^{n-m}(C)=\nu_{n}^{X}(C)$. 
\end{proof}

\section{Szasz functions and their asymptotic behavior}
\label{SZASZF}

In this section, we define Szasz functions over unimodular cones and investigate their asymptotic behavior. 
First of all, let us compute in one dimension, which illustrate the general case.

\subsection{Computation in one dimension}
\label{ONED}

The Szasz function associated with a function $\varphi$ on $\mb{R}$, originally introduced and discussed in \cite{S}, is defined by 
\begin{equation}
\label{Sf}
S_{N}(\varphi)(x)=\sum_{k=0}^{\infty}\ell_{k}(Nx)\varphi(k/N),\quad \ell_{k}(x)=\frac{x^{k}}{k!}e^{-x},\quad x \in \mb{R}. 
\end{equation}
Szasz introduced the function $S_{N}(\varphi)$ as an analogue of the Bernstein polynomial
\[
B_{N}(\varphi)(x)=\sum_{k=0}^{N}m_{N}^{k}(x)\varphi(k/N),\quad 
m_{N}^{k}(x)={N \choose k}x^{k}(1-x)^{N-k}. 
\]
Indeed, these two functions are related through Poisson's law of rare events
\[
\lim_{N \to \infty}m_{N}^{k}(x/N)=\ell_{k}(x). 
\]
For us, an important property of the Szasz function $S_{N}(\varphi)$ is the following: 
\[
\int_{0}^{\infty}S_{N}(\varphi)(x)\,dx=\frac{1}{N}\sum_{k=0}^{\infty}\varphi(k/N)=:R_{N}([0,+\infty);\varphi)
\]
for any $\varphi \in \scal(\mb{R})$. We put 
\[
R_{N}((-\infty,0];\varphi):=\frac{1}{N}\sum_{k=0}^{\infty}\varphi(-k/N). 
\]
Then, once we obtain the asymptotic expansion of $S_{N}(\varphi)$ as $N \to \infty$ with a 
suitable reminder estimate, then integrating it on $[0,\infty)$ will give the asymptotic expansion of $R_{N}([0,+\infty);\varphi)$. 
But then we have the formula 
\begin{equation}
\label{alt1}
R_{N}([0,1];\varphi)=R_{N}([0,+\infty);\varphi) +R_{N}((-\infty,0];T_{1}\varphi)-R_{N}(\mb{R};\varphi), 
\end{equation}
where we set $T_{1}\varphi (x)=\varphi(1+x)$. 
In this formula, note that we have $R_{N}(\mb{R};\varphi)=\int_{\mb{R}}\varphi(x)\,dx+O(N^{-\infty})$ (see \cite{GS} or see Lemma \ref{notuC}). 
We also have $R_{N}((-\infty,0];T_{1}\varphi)=R_{N}([0,+\infty);\psi)$, where we set $\psi(x)=\varphi(1-x)$, 
and hence the asymptotics of $R_{N}([0,+\infty);\varphi)$ will give the classical asymptotic 
Euler-Maclaurin formula \eqref{cAEM}. 
Thus, to obtain \eqref{cAEM}, it is enough to consider $R_{N}([0,+\infty);\varphi)$. 
In one dimension, we can consider a bit more general situation. 
We choose a positive integer $q \geq 1$ and a $q^{{\rm th}}$ root of unity $\omega$. 
We consider the twisted Riemann sum 
\begin{equation}
\label{twist}
R_{N}^{\omega}(\varphi):=\frac{1}{N}\sum_{k=0}^{\infty}\omega^{k}\varphi(k/N),  
\end{equation}
where $\varphi \in C_{0}^{\infty}(\mb{R})$. The twisted Riemann sum $R_{N}^{\omega}(\varphi)$ is 
discussed in \cite{GS} and the asymptotic formula 
\begin{equation}
\label{twistEM}
R_{N}^{\omega}(\varphi) \sim 
\sum_{n \geq 1}(-1)^{n-1}b_{n}^{\omega}\frac{\varphi^{(n-1)}(0)}{N^{n}}
\end{equation}
was obtained, where the coefficients $b_{n}^{\omega}$ is defined by the Taylor expansion of the function 
\begin{equation}
\label{twistTodd}
\tau_{\omega}(s):=\frac{s}{1-\omega e^{-s}}=\sum_{n \geq 1}b_{n}^{\omega}s^{n},\quad b_{1}^{\omega}=\frac{1}{1-\omega}. 
\end{equation}
The formula \eqref{twistEM} is used in \cite{GS} to obtain asymptotic Euler-Maclaurin formula for simple polytopes. 
Now, to obtain the asymptotic expansion of the twisted Riemann sum $R_{N}^{\omega}(\varphi)$ along with 
our strategy, we use the twisted version of the Szasz function, which is defined by 
\begin{equation}
\label{twistSz}
S_{N}^{\omega}(\varphi)(x)=\sum_{k=0}^{\infty}\omega^{k}\ell_{k}(Nx)\varphi(k/N). 
\end{equation}
From the definition, we have 
\begin{equation}
\label{twSR}
\int_{0}^{\infty}S_{N}^{\omega}(\varphi)(x)\,dx=R_{N}^{\omega}(\varphi). 
\end{equation}
To state a result on asymptotic expansion of the twisted Szasz function $S_{N}^{\omega}(\varphi)$, we need to 
prepare some properties of the Stirling numbers of the second kind and related polynomials. 

The Stirling numbers of the second kind, denoted by $S(n,k)$ where $n,k$ are integers satisfying $0 \leq k \leq n$, 
are defined by the following recursion formula: 
\begin{equation}
\label{stirling}
\begin{gathered}
S(0,0)=1,\quad S(n,0)=0,\quad S(n,n)=1\ \ (n \geq 1),\\
S(n+1,k)=kS(n,k)+S(n,k-1)\ \ (1 \leq k \leq n). 
\end{gathered}
\end{equation}
For example, we have $S(n,1)=1$ $(n \geq 1)$ and $S(n,n-1)={n \choose 2}$ $(n \geq 2)$. 
For convenience, we set $S(n,k)=0$ for $0 \leq n <k$. For any integer $n,k$ with $0 \leq k \leq n$, 
we define the polynomial $p(n,k;z)$ in $z \in \mb{C}$ of degree $k$ by 
\begin{equation}
\label{stpoly}
p(n,k;z):=\sum_{t=0}^{k}{n \choose t}(-1)^{t}S(n-t,k-t)z^{k-t}. 
\end{equation}
Some of $p(n,k;z)$ are computed as follows. 
\begin{equation}
\label{special1}
\begin{gathered}
p(0,0;z)=1,\quad p(n,0;z)=0,\quad p(n,n;z)=(z-1)^{n}\ \ (n \geq 1) \\
p(n,1;z)=z,\quad p(n,n-1;z)={n \choose 2}z(z-1)^{n-2}\ \ (n \geq 2). 
\end{gathered}
\end{equation}

\begin{lem}
\label{spoly}
\begin{enumerate}
\item For any non-negative integer $n$, we have 
\[
e^{z}\sum_{k=0}^{n}S(n,k)z^{k}=\sum_{k=0}^{\infty}\frac{k^{n}}{k!}z^{k}. 
\]
\item The polynomials $p(n,k;z)$ satisfy the following recursion formula$:$ 
\[
p(n+1,k;z)=(z-1)p(n,k-1;z)+kp(n,k;z)+np(n-1,k-1;z),\quad 1 \leq k \leq n. 
\]
\item For $[n/2]+1 \leq k \leq n$, the polynomial $p(n,k;z)$ is divisible by $(z-1)^{2k-n}$. 
In particular, we have $p(n,k;1)=0$ for $[n/2]+1 \leq k \leq n$. 
\end{enumerate}
\end{lem}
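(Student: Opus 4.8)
Proof plan.

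The plan is to prove the three parts in order, since each uses the previous one. For part (1), I would start from the generating function of the Stirling numbers of the second kind. Recall that $S(n,k)$ counts partitions of an $n$-set into $k$ blocks, so $\sum_{k=0}^{n} S(n,k)\, z^k \binom{k}{?}$... more to the point, the standard identity $x^n = \sum_{k} S(n,k)\, x(x-1)\cdots(x-k+1)$ together with $\sum_{k\ge 0} \binom{x}{k} z^k = (1+z)^x$ is not quite what I want; instead I would verify directly that $\sum_{k \ge 0} \frac{k^n}{k!} z^k = e^z \sum_{j=0}^n S(n,j) z^j$ by induction on $n$ using the operator $z\frac{d}{dz}$: applying $z\frac{d}{dz}$ to $e^z\sum_j S(n,j)z^j$ produces $z e^z \sum_j S(n,j) z^j + e^z \sum_j j S(n,j) z^j$, and matching coefficients of $z^k$ against $e^z \sum_k S(n+1,k) z^k$ reproduces exactly the recursion $S(n+1,k) = k S(n,k) + S(n,k-1)$ in \eqref{stirling}. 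The base case $n=0$ is $\sum_{k\ge 0} z^k/k! = e^z$, i.e. $S(0,0)=1$.

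For part (2), the cleanest route is to feed the definition \eqref{stpoly} of $p(n,k;z)$ into the Stirling recursion. Write $p(n+1,k;z) = \sum_{t=0}^k \binom{n+1}{t}(-1)^t S(n+1-t,k-t) z^{k-t}$, use Pascal's rule $\binom{n+1}{t} = \binom{n}{t} + \binom{n}{t-1}$ and the Stirling recursion $S(n+1-t,k-t) = (k-t)S(n-t,k-t) + S(n-t,k-t-1)$ (valid in the relevant range, with the convention $S(a,b)=0$ for $a<b$ and the extra care at the boundary terms $t=0$ and $t=k$), and then reassemble the four resulting sums. The sum with the factor $(k-t)$ splits as $k\sum_t(\cdots) - \sum_t t(\cdots)$; the first piece gives $k\,p(n,k;z)$, and I expect the $-\sum_t t(\cdots)$ piece to combine with part of the Pascal expansion to produce the $n\, p(n-1,k-1;z)$ term after a reindexing $t \mapsto t-1$ and using $t\binom{n}{t} = n\binom{n-1}{t-1}$. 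The remaining terms, carrying the $z^{k-t}$ versus $z^{k-1-t}$ discrepancy, should assemble into $(z-1)p(n,k-1;z)$, the $-1$ coming precisely from the $(-1)^t$ sign bookkeeping. This is a bookkeeping computation; the one genuinely delicate point is handling the boundary indices $t=0,1,k-1,k$ where the general recursions degenerate, and checking them against the explicit special values in \eqref{special1}.

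For part (3), I would argue by induction on $n$, using the recursion from part (2). Fix $k$ with $[n/2]+1 \le k \le n$ (equivalently $2k-n \ge 1$, $2k-n \ge 2$ when we want vanishing of value and derivative at $1$, etc.), and examine each term on the right of $p(n,k;z) = (z-1)p(n-1,k-1;z) + k\,p(n-1,k;z) + (n-1)p(n-2,k-1;z)$ (reindexing the recursion down by one). For the first term, $p(n-1,k-1;z)$ is divisible by $(z-1)^{2(k-1)-(n-1)} = (z-1)^{2k-n-1}$ by the inductive hypothesis (one checks $k-1$ lies in the admissible range $[(n-1)/2]+1 \le k-1 \le n-1$ when $2k-n\ge 1$, modulo the parity boundary which I would treat using the explicit formula $p(n,n;z)=(z-1)^n$ and $p(n,n-1;z)=\binom n2 z(z-1)^{n-2}$ as additional base cases), so the product is divisible by $(z-1)^{2k-n}$. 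For the second term, $p(n-1,k;z)$ is divisible by $(z-1)^{2k-(n-1)} = (z-1)^{2k-n+1}$, hence by $(z-1)^{2k-n}$. For the third term, $p(n-2,k-1;z)$ is divisible by $(z-1)^{2(k-1)-(n-2)} = (z-1)^{2k-n}$ directly. So every term is divisible by $(z-1)^{2k-n}$, completing the induction; setting $z=1$ and using $2k-n \ge 1$ gives $p(n,k;1)=0$. I expect the main obstacle to be not any single step but the careful handling of the range conditions and parity boundary cases in parts (2) and (3) — in particular making sure that when I invoke the inductive divisibility statement for $p(n-1,k-1;z)$ and $p(n-1,k;z)$ the indices genuinely satisfy the hypotheses, and patching the edge cases with the explicit polynomials listed in \eqref{special1}.
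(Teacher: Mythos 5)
Your plan is correct and follows essentially the same route as the paper: part (1) by induction on $n$ via the Stirling recursion, part (2) by combining Pascal's rule with the recursion \eqref{stirling} and reassembling the sums (the paper's bookkeeping matches yours, with the $-p(n,k-1;z)$ from Pascal and the $zp(n,k-1;z)$ from the Stirling step producing the $(z-1)p(n,k-1;z)$ term), and part (3) by induction using the recursion from (2), with the parity boundary case (where the required exponent drops to $0$ or $1$) treated separately exactly as you anticipate. No gaps beyond the bookkeeping you already flag.
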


\begin{proof}
(1) is proved easily by using induction on $n$ and the recurrence formula for the 
Stirling numbers $S(n,k)$ of the second kind. 
To prove (2), let $1 \leq k \leq n$. By using the relation ${n+1 \choose t}={n \choose t}+{n \choose t-1}$ for $1 \leq t \leq n$,  
we have 
\[
p(n+1,k;z)=\sum_{t=0}^{k}{n \choose t}(-1)^{t}S(n+1-t,k-t)z^{k-t}-p(n,k-1;z). 
\]
Denote $S$ the sum above. Then, by the recursion formula \eqref{stirling}, we have 
\[
\begin{split}
S&=\sum_{t=0}^{k}{n \choose t}(-1)^{t}(k-t)S(n-t,k-t)z^{k-t}+\sum_{t=0}^{k-1}{n \choose t}(-1)^{t}S(n-t,k-1-t)z^{k-t} \\
&=kp(n,k;z)-n\sum_{t=1}^{k}{n-1 \choose t-1}(-1)^{t}S(n-t,k-t)z^{k-t}+zp(n,k-1;z). 
\end{split}
\]
Minus the sum in the middle of the above equals $np(n-1,k-1;z)$, and hence (2) is proved. 

Let us prove (3). Since the statement is obvious from \eqref{special1} for $n=1,2$, we assume that, for some $n \geq 2$, 
$p(m,k;z)$ is divisible by $(z-1)^{2k-m}$ for each $1 \leq m \leq n$ and $[m/2]+1 \leq k \leq m$, 
and use the induction on $n$. So, we take $l$ with $[(n+1)/2]+1 \leq l \leq n+1$. 
If $l=n+1$, $p(n+1,n+1;z)=(z-1)^{n+1}$ and hence (3) is clear. Thus, we assume that $[(n+1)/2]+1 \leq l \leq n$. 
By the induction hypothesis, $p(n,l;z)$ is divisible by $(z-1)^{2l-n}$. 
We have $[(n-1)/2]+1 = [(n+1)/2]$ and hence, by induction hypothesis, $p(n-1,l-1;z)$ is divisible by $(z-1)^{2l-n-1}$. 
If $[n/2]=l-1$, then $n$ is even and $2l-n-1=1$, and hence, by the recurrence relation (2), $p(n+1,l;z)$ is 
divisible by $(z-1)$. Otherwise, we have $[n/2]+1 \leq l-1$, and hence $p(n,l-1;z)$ is 
divisible by $(z-1)^{2l-n-2}$. Then, again by (2), $p(n+1,l;z)$ is divisible by $(z-1)^{2l-n-1}$. 
\end{proof}

Now, we can state the asymptotic expansion of the twisted Szasz functions 
$S_{N}^{\omega}(\varphi)$ by using the polynomials $p(n,k;z)$ as follows. 

\begin{prop}
\label{tSzA}
Let $\varphi \in \scal(\mb{R})$. Let $\omega$ be a $q^{{\rm th}}$ root of unity. 
Then, for any positive integer $n$ and positive number $K$ such that $n < K < 2n$, 
there exists a constant $C_{K,n}>0$ such that we have 
\begin{equation}
\label{as11}
S_{N}^{\omega}(\varphi)(x)=\sum_{\mu=0}^{2n-1}\frac{\varphi^{(\mu)}(x)}{\mu!}
N^{-\mu}J_{\mu}^{\omega}(Nx)+S_{2n,N}^{\omega}(x),\quad x > 0, 
\end{equation}
where 
the function $S_{2n,N}^{\omega}(x)$ satisfies the following estimate$:$
\begin{equation}
\label{er}
|S_{2n,N}^{\omega}(x)| \leq C_{K,n}N^{-n}(1+x)^{n-K},\quad x>0, \ N>0.  
\end{equation}
The function $J_{\mu}^{\omega}(x)$ is given by 
\begin{equation}
\label{jo}
J_{\mu}^{\omega}(x)=e^{-(1-\omega)x}\sum_{k=0}^{\mu}p(\mu,k;\omega)x^{k}.  
\end{equation}
When $\omega=1$, the function $J_{\mu}^{1}(x)$ is a polynomial in $x$ of degree at most $[\mu/2]$. 
\end{prop}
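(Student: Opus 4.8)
\emph{Proof plan.} The plan is to substitute a Taylor expansion of $\varphi(k/N)$ about $x$ into $S_N^\omega(\varphi)(x)=\sum_{k\ge0}\omega^k\ell_k(Nx)\varphi(k/N)$, carried to order $2n$, then identify the resulting coefficient functions and estimate the tail. Concretely, I write $\varphi(k/N)=\sum_{\mu=0}^{2n-1}\frac{\varphi^{(\mu)}(x)}{\mu!}(k/N-x)^\mu+r_{2n}(k,x)$ with Lagrange remainder $|r_{2n}(k,x)|\le\frac{|k/N-x|^{2n}}{(2n)!}\sup_\xi|\varphi^{(2n)}(\xi)|$, the supremum over $\xi$ between $x$ and $k/N$. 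Since $\varphi$ is Schwartz, all the $k$-sums in sight converge absolutely (the $\mu$-th absolute moment of the Poisson$(Nx)$ weights $\ell_k(Nx)$ is finite), so interchanging the convergent $k$-sum with the finite $\mu$-sum is harmless; the $\mu$-th main term becomes $\frac{\varphi^{(\mu)}(x)}{\mu!}N^{-\mu}\sum_{k\ge0}\omega^k\ell_k(Nx)(k-Nx)^\mu$, and one \emph{defines} $S_{2n,N}^\omega(x):=\sum_{k\ge0}\omega^k\ell_k(Nx)r_{2n}(k,x)$, so that \eqref{as11} holds by construction once the moment sums are identified.

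The first step is thus to verify that $J_\mu^\omega(y):=\sum_{k\ge0}\omega^k\ell_k(y)(k-y)^\mu$ is the function in \eqref{jo}. Expanding $(k-y)^\mu$ by the binomial theorem and applying Lemma \ref{spoly}(1) to each inner sum, in the form $\sum_{k\ge0}\frac{(\omega y)^k}{k!}k^j=e^{\omega y}\sum_{i=0}^jS(j,i)(\omega y)^i$, yields $J_\mu^\omega(y)=e^{-(1-\omega)y}\sum_{j,i}\binom{\mu}{j}(-1)^{\mu-j}S(j,i)\omega^i y^{\mu-j+i}$; collecting the coefficient of $y^l$ and comparing with the definition \eqref{stpoly} of $p(\mu,l;z)$ gives $J_\mu^\omega(y)=e^{-(1-\omega)y}\sum_{l=0}^\mu p(\mu,l;\omega)y^l$. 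This is pure bookkeeping. When $\omega=1$ the exponential prefactor disappears and $J_\mu^1(y)=\sum_{l=0}^\mu p(\mu,l;1)y^l$; Lemma \ref{spoly}(3) forces $p(\mu,l;1)=0$ for $l>[\mu/2]$, so $J_\mu^1$ is a polynomial of degree $\le[\mu/2]$. In particular $J_{2n}^1$ has degree $\le n$, is nonnegative on $[0,\infty)$ (a sum of nonnegative terms), and vanishes at $0$ since $p(2n,0;1)=0$; hence $0\le J_{2n}^1(y)\le C_n(y+y^n)$ for $y\ge0$, a bound used again below.

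The substance of the proposition is the remainder bound \eqref{er}, which I will establish uniformly in $x>0$ and $N\ge1$ (the range relevant to the $N\to\infty$ expansion). As $|\omega|=1$, $|S_{2n,N}^\omega(x)|\le\sum_{k\ge0}\ell_k(Nx)|r_{2n}(k,x)|$ with $(\ell_k(Nx))_k$ the Poisson$(Nx)$ law, and I split this sum according to whether $k/N$ is comparable to $x$. On the near range $x/2\le k/N\le2x$ the interval between $x$ and $k/N$ lies in $[x/2,2x]$, so the rapid decay of $\varphi^{(2n)}$ gives $\sup|\varphi^{(2n)}|\le C_M(1+x)^{-M}$ for any chosen $M$; bounding the near-sum of $\ell_k(Nx)(k-Nx)^{2n}$ by $J_{2n}^1(Nx)\le C_n\big((Nx)+(Nx)^n\big)$ and using $N\ge1$ gives a contribution $\le CN^{-2n}(1+x)^{-M}\big((Nx)+(Nx)^n\big)\le CN^{-n}(1+x)^{n-M}$, which is $\le CN^{-n}(1+x)^{n-K}$ once $M\ge K$. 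On the far range one loses the smallness of $\varphi^{(2n)}$, but $|k-Nx|>Nx/2$ there, and a Chernoff-type Poisson tail estimate (combined with the fact that the $4n$-th central moment of Poisson$(\lambda)$ is $O(\lambda^{2n})$ and Cauchy--Schwarz) gives $\sum_{|k-Nx|>Nx/2}\ell_k(Nx)(k-Nx)^{2n}\le C_J(1+Nx)^{-J}$ for every $J$; combining with the crude Lagrange bound $|r_{2n}(k,x)|\le\|\varphi^{(2n)}\|_\infty|k/N-x|^{2n}/(2n)!$ and using $N\ge1$ (so $(1+Nx)^{-J}\le(1+x)^{-J}$) bounds the far part by $CN^{-2n}(1+x)^{-J}\le CN^{-n}(1+x)^{n-K}$ once $J\ge K-n$. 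Adding the two pieces yields \eqref{er}. The structural reason one expands to order $2n$ to obtain an error of size $N^{-n}$ is precisely that the $2n$-th central moment of Poisson$(\lambda)$ grows only like $\lambda^n$ — a half-order cancellation — and the range $n<K<2n$ records the interplay of this moment growth with the polynomial decay drawn from $\varphi^{(2n)}$.

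The main obstacle is \eqref{er}, and within it the demand that a single bound work for all $x>0$ at once: $x$ small, where the Poisson parameter $Nx$ can be $O(1)$ and the far tail is not exponentially small but the central moments are themselves tiny; and $x$ large, where the bound must actually decay in $x$, decay that is supplied on the near range by the Schwartz property of $\varphi^{(2n)}$ and on the far range by the moment-tail decay — all while keeping the powers of $N$ straight. The remaining ingredients — the Taylor splitting, the algebraic identification of $J_\mu^\omega$ via Lemmas \ref{spoly}(1) and \ref{spoly}(3), and the degree statement for $\omega=1$ — are routine.
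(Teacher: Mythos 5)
Your proposal is correct, and it reproduces the paper's argument for everything except the error estimate \eqref{er}: the Taylor expansion in $k/N-x$, the interchange of the finite $\mu$-sum with the absolutely convergent $k$-sum, the identification of $J_\mu^\omega$ through Lemma \ref{spoly}(1) and the definition \eqref{stpoly}, and the degree bound via Lemma \ref{spoly}(3) are all exactly as in the paper. Where you genuinely diverge is in how the remainder is controlled. The paper keeps the \emph{integral} form of the Taylor remainder, $R_{2n}(k/N,x)=\int_0^1(1-t)^{2n-1}\varphi^{(2n)}(x+t(k/N-x))\,dt$, and exploits the fact that both $x$ and $k/N$ lie in $[0,\infty)$, so that $x+t(k/N-x)=(1-t)x+t(k/N)\geq(1-t)x$; the Schwartz decay then gives $|\varphi^{(2n)}(x+t(k/N-x))|\leq C(1-t)^{-K}x^{-K}$, and the singular factor $(1-t)^{-K}$ is absorbed by $(1-t)^{2n-1}$ precisely because $K<2n$. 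This produces the single uniform bound $|R_{2n}(k/N,x)|\leq Cx^{-K}$ valid for \emph{all} $k$ simultaneously, so the entire error is dominated by $Cx^{-K}N^{-2n}J_{2n}^1(Nx)$ with no splitting of the $k$-sum and no tail estimates. You instead take the Lagrange form of the remainder and split into a near range $k/N\in[x/2,2x]$ (where the Schwartz decay of $\varphi^{(2n)}$ applies) and a far range (handled by Cauchy--Schwarz, the polynomial growth of Poisson central moments, and a Chernoff tail bound). Both routes are sound. The paper's is shorter and makes the hypothesis $K<2n$ do visible work; yours never actually uses $K<2n$, so it proves the estimate for any $K>n$ at the cost of importing standard probabilistic tail bounds, and your explicit restriction to $N\geq 1$ is honest --- the paper's own bound is likewise only uniform for $N$ bounded below, which is all the asymptotic statement needs.
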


\begin{proof}
Let $x>0$. Substituting the Taylor expansion 
\[
\begin{gathered}
\varphi(k/N)=\sum_{0 \leq \mu \leq 2n-1}\frac{\varphi^{(\mu)}(x)}{\mu!}(k/N-x)^{\mu}+
\frac{(k/N-x)^{2n}}{(2n-1)!}R_{2n}(k/N,x), \\
R_{2n}(k/N,x)=\int_{0}^{1}(1-t)^{2n-1}\varphi^{(2n)}(x+t(k/N-x))\,dt, 
\end{gathered}
\]
we have 
\[
S_{N}^{\omega}(\varphi)(x)=\sum_{\mu=0}^{2n-1}\frac{\varphi^{(\mu)}(x)}{\mu!N^{\mu}}J_{\mu}^{\omega}(Nx)+S_{2n,N}^{\omega}(x), 
\]
where $J_{\mu}^{\omega}(x)$ and $S_{2n,N}(x)$ are given by 
\[
\begin{gathered}
J_{\mu}^{\omega}(x)=\sum_{k=0}^{\infty}\omega^{k}\ell_{k}(x)(k-x)^{\mu},\\
S_{2n,N}^{\omega}(x)=\frac{1}{(2n-1)!N^{2n}}\sum_{k=0}^{\infty}\omega^{k}\ell_{k}(Nx)(k-Nx)^{2n}R_{2n}(k/N,x). 
\end{gathered}
\]
By using Lemma \ref{spoly}, (1) and the definition \eqref{Sf} of the function $\ell_{k}(x)$, 
it is easy to show the formula \eqref{jo} for $J_{\mu}^{\omega}(x)$. 
We set $S_{2n,N}^{\omega}(x)=\frac{1}{(2n-1)!N^{2n}}S_{2n,N}(x)$. 
Take $K$ as in the statement, and choose $C>0$ so that $|\varphi^{(2n)}(y)| \leq C(1+|y|)^{-K}$ for any $y \in \mb{R}$. 
Then, we have $|x+t(k/N-x)| \geq (1-t)x$ for any $t \in [0,1]$, $x \geq 0$, $k \geq 0$, and hence 
$|R_{2n}(k/N,x)| \leq C_{K,n}x^{-K}$, $x > 1$, $k \geq 0$. 
When $0\leq x \leq 1$, $|R_{2n}(k/N,x)|$ is bounded uniformly in $N$. 
Thus, we have $|S_{2n,N}(x)| \leq Cx^{-K}J_{2n}^{1}(Nx)$ for $x>1$. 
When $0 \leq x \leq 1$, we have $|S_{2n,N}(x)| \leq CN^{-2n}J_{2n}^{1}(Nx)$. 
But, by Lemma \ref{spoly}, (3) and the formula \eqref{jo}, $J_{2n}^{1}(x)$ is a polynomial 
in $x$ of degree at most $n$. Therefore, we obtain \eqref{er}. 
\end{proof}

In general, for any $\tau \in \mb{C}$ with $\re(\tau)>0$ and any $n >0$, we have 
\[
\int_{0}^{\infty}e^{-\tau Nx}\varphi(x)\,dx =\sum_{j=1}^{n-1}\frac{\varphi^{(j-1)}(0)}{(\tau N)^{j}} +O(N^{-n}). 
\]
Taking $K>0$ in Proposition \ref{tSzA} so that $n+1<K<2n$ and integrating \eqref{as11}, we conclude the following. 

\begin{prop}
When $\omega \neq 1$ is the $q^{{\rm th}}$ root of unity, we have 
\begin{equation}
\label{twEMn1}
R_{N}^{\omega}(\varphi) \sim \sum_{n \geq 1}c_{n}^{\omega}\frac{\varphi^{(n-1)}(0)}{N^{n}},\quad
c_{n}^{\omega}=\sum_{\alpha=0}^{n-1}\sum_{k=0}^{\alpha}
\frac{(n-k-1)!}{\alpha!(n-\alpha-1)!}\frac{p(\alpha,\alpha-k;\omega)}{(1-\omega)^{n-k}}. 
\end{equation}
When $\omega=1$, we have 
\begin{equation}
\label{EMn2}
R_{N}([0,\infty);\varphi) \sim \int_{0}^{\infty}\varphi(x)\,dx +\sum_{n \geq 1}c_{n}\frac{\varphi^{(n-1)}(0)}{N^{n}}, \quad
c_{n}=\sum_{\alpha=n}^{2n}\frac{(\alpha-n)!}{\alpha!}(-1)^{\alpha-n+1}p(\alpha,\alpha-n), 
\end{equation}
where we set 
\begin{equation}
\label{pnk}
p(n,k):=p(n,k;1)=\sum_{t=0}^{k}
{n \choose t}(-1)^{t}S(n-t,k-t),\quad 0 \leq k \leq n.  
\end{equation}
\end{prop}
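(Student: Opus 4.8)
The plan is to obtain both asymptotic expansions by integrating the pointwise expansion of the twisted Szasz function from Proposition \ref{tSzA} over the half-line and invoking the identity \eqref{twSR}, $R_N^\omega(\varphi)=\int_0^\infty S_N^\omega(\varphi)(x)\,dx$. Fix the target order $n$ (we may assume $n\geq 2$, which suffices for an asymptotic statement) and choose $K$ with $n+1<K<2n$. By the remainder bound \eqref{er} the tail contributes $\int_0^\infty|S_{2n,N}^\omega(x)|\,dx\leq C_{K,n}N^{-n}\int_0^\infty(1+x)^{n-K}\,dx=O(N^{-n})$, the integral being finite since $n-K<-1$. So everything reduces to integrating the $2n$ main terms $\frac{\varphi^{(\mu)}(x)}{\mu!}N^{-\mu}J_\mu^\omega(Nx)$, $0\leq\mu\leq 2n-1$; termwise integration is legitimate because, by \eqref{jo}, $|J_\mu^\omega(Nx)|$ is dominated by a polynomial in $Nx$ (as $|e^{-(1-\omega)Nx}|\leq 1$) while $\varphi^{(\mu)}$ is Schwartz. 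Substituting \eqref{jo} reduces each main term to a sum over $0\leq k\leq\mu$ of $\frac{p(\mu,k;\omega)}{\mu!}N^{k-\mu}\int_0^\infty\varphi^{(\mu)}(x)\,x^k e^{-(1-\omega)Nx}\,dx$.

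For $\omega\neq 1$ one has $\re(1-\omega)=1-\re\omega>0$, so I apply the Watson-type identity stated just before the proposition, with $\tau=1-\omega$, to the Schwartz function $\psi_{\mu,k}(x):=x^k\varphi^{(\mu)}(x)$, getting the asymptotic series $\sum_{j\geq 1}\psi_{\mu,k}^{(j-1)}(0)(1-\omega)^{-j}N^{-j}$. The Leibniz rule gives $\psi_{\mu,k}^{(j-1)}(0)=\binom{j-1}{k}k!\,\varphi^{(\mu+j-1-k)}(0)$, which vanishes unless $j\geq k+1$. Tracking the power $N^{k-\mu-j}$, the pair $(\mu,k)$ feeds into order $N^{-n}$ only through $j=n-\mu+k$ (which forces $\mu\leq n-1$ and automatically $j\geq k+1$), with contribution $\frac{p(\mu,k;\omega)}{\mu!}\cdot\frac{\binom{n-\mu+k-1}{k}k!}{(1-\omega)^{n-\mu+k}}\varphi^{(n-1)}(0)$ — the surviving derivative being $\varphi^{(n-1)}(0)$ in every case. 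Summing over $0\leq k\leq\mu\leq n-1$, rewriting $\binom{n-\mu+k-1}{k}k!=(n-\mu+k-1)!/(n-\mu-1)!$, and relabelling the inner index $k\mapsto\mu-k$ yields exactly \eqref{twEMn1}.

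For $\omega=1$ the exponential is absent and, by Proposition \ref{tSzA} together with Lemma \ref{spoly}(3), $J_\mu^1$ is a polynomial in $x$ of degree $\leq[\mu/2]$, so the $k$-sum stops at $[\mu/2]$. The term $\mu=0$ gives $\int_0^\infty\varphi(x)\,dx$, the leading term of \eqref{EMn2}. For $\mu\geq 1$ and $0\leq k\leq[\mu/2]<\mu$, straightforward repeated integration by parts (all boundary terms at $\infty$ vanishing since $\varphi$ is Schwartz, and those at $0$ because of the surviving power of $x$) gives $\int_0^\infty\varphi^{(\mu)}(x)x^k\,dx=(-1)^{k+1}k!\,\varphi^{(\mu-k-1)}(0)$. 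Hence the $(\mu,k)$ term lands in order $N^{-n}$ exactly when $\mu-k=n$; writing $\mu=n+k$, the constraint $k\leq[\mu/2]$ becomes $0\leq k\leq n$, i.e. $n\leq\mu\leq 2n$, and after the substitution $\alpha=n+k$ the total coefficient of $N^{-n}$ is precisely $c_n$ as in \eqref{EMn2}.

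I expect the only real obstacle to be the combinatorial bookkeeping: identifying, in each of the two cases, exactly which indices $(\mu,k)$ (equivalently which order $j$ in the Watson expansion when $\omega\neq 1$) contribute to order $N^{-n}$, verifying that the associated derivative of $\varphi$ is always $\varphi^{(n-1)}(0)$, and performing the index substitution that collapses the resulting double sum into the closed forms \eqref{twEMn1} and \eqref{EMn2}. The minor analytic points — convergence of the remainder integral for $K>n+1$ and termwise integrability of the main terms — are routine, and no input beyond Proposition \ref{tSzA}, Lemma \ref{spoly}, and the elementary Watson-type identity is needed.
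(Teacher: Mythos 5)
Your proposal is correct and follows essentially the same route as the paper's one-line proof: integrate the expansion \eqref{as11} of Proposition \ref{tSzA} termwise, control the remainder via \eqref{er}, and use the Watson-type identity with $\tau=1-\omega$ for $\omega\neq 1$ (resp.\ direct integration by parts for $\omega=1$); your index bookkeeping reproducing \eqref{twEMn1} and \eqref{EMn2} checks out. The only adjustment needed is to invoke Proposition \ref{tSzA} one order deeper (with $n$ replaced by $n+1$, say) when extracting the coefficient of $N^{-n}$: with your stated truncation the main sum stops at $\mu=2n-1$, so the $\mu=2n$, $k=n$ contribution to $c_{n}$ in \eqref{EMn2} would be missing, and the remainder $O(N^{-n})$ is of the same size as the term being identified.
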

Note that a direct computation and the well-known formula for the relation among the Bernoulli numbers, 
Catalan numbers $\frac{1}{n+1}{2n \choose n}$, and the Stirling numbers (\cite{GKP}) shows
\begin{equation}
\label{Catalan}
c_{n}=-\frac{(n+1)}{n!}{2n \choose n}^{-1}\sum_{l=0}^{n}\frac{(-1)^{l}}{l+1}{2n \choose n+l}S(n+l,l)=-\frac{b_{n}}{n!}, 
\end{equation}
which shows that, for $\omega=1$, we have 
\begin{equation}
R_{N}([0,\infty);\varphi) \sim \int_{0}^{\infty}\varphi(x)\,dx - \sum_{n \geq 1}\frac{b_{n}}{n!}\varphi^{(n-1)}(0)N^{-n}, 
\end{equation}
from which we have \eqref{cAEM}. For $\omega \neq 1$, we compare each term of the asymptotics \eqref{twistEM}, \eqref{twEMn1} to get 
\begin{equation}
\label{Bom}
b_{n}^{\omega}=(-1)^{n-1}c_{n}^{\omega}
=\sum_{\alpha=0}^{n-1}\sum_{k=0}^{\alpha}(-1)^{k+1}\frac{(n-k-1)!}{\alpha!(n-\alpha-1)!}\frac{p(\alpha,\alpha-k;\omega)}{(\omega-1)^{n-k}}. 
\end{equation}

\subsection{Definition of Szasz functions}
\label{DEFSF}

Let $C$ be a unimodular cone in $X$. 
Since the Riemann sum $R_{N}(C;\varphi)$ depends only on the restriction of $\varphi$ to $L(C)$, 
replacing $(X,\Lambda)$ by $(L(C),L(C) \cap \Lambda)$ if necessary, we assume, for a moment, 
that $\dim(C)=\dim(X)$. 
Then, $C$ is written in the form 
\[
C=\sum_{e \in E}\mb{R}_{+}e,  
\]
where $E$ is an integral basis of $\Lambda$, and $\mb{R}_{+}$ denotes the set of non-negative real numbers. 
For abstract two sets $S$ and $T$, let $S^{T}$ be the set of all functions from $T$ to $S$. 
The whole space $X$ is identified with $\mb{R}^{E}$. Since $E$ is an integral basis, $\Lambda$ is identified with $\mb{Z}^{E}$. 
Note that, $C$ and $C \cap \Lambda$ are identified with $\mb{R}_{+}^{E}$ and $\mb{Z}_{+}^{E}$, 
respectively, where $\mb{Z}_{+}$ denotes the set of non-negative integers. 
For any $\alpha \in \mb{Z}_{+}^{E}$ and $x \in X$, we set 
\[
\alpha!=\prod_{e \in E}\alpha(e)!,\quad x^{\alpha}=\prod_{e \in E}x(e)^{\alpha(e)}, 
\]
where $x(e)$ is the value of $x$ at $e \in E$ when we identify $X=\mb{R}^{E}$. 
For each $\gamma \in \mb{Z}_{+}^{E}$, we define the function $\ell_{\gamma}$ on $X$ by 
\begin{equation}
\label{coeffS}
\ell_{\gamma}(x)=\frac{x^{\gamma}}{\gamma!}e^{-\sum_{e \in E}x(e)}. 
\end{equation}
Then, the function $\ell_{\gamma}$ is non-negative, integrable on $C$ and satisfies
\begin{equation}
\label{ellP}
\int_{C}\ell_{\gamma}(x)\,dx=1\quad (\gamma \in \mb{Z}_{+}^{E}),\quad 
\sum_{\gamma \in \mb{Z}_{+}^{E}}\ell_{\gamma}(x)=1\quad (x \in X). 
\end{equation}

\begin{defin}
\label{SzDef}
We define the Szasz measure $\scal(x)=d\scal_{x}$ on $X=\mb{R}^{E}$, parametrized by $x \in X$, by 
\[
\scal(x)=d\scal_{x}:=\sum_{\gamma \in \mb{Z}_{+}^{E}}\ell_{\gamma}(x)\delta_{\gamma}. 
\]
\end{defin}
By the second property of \eqref{ellP}, the measure $d\scal_{x}$ is a probability measure on $C$. 
For each $N \in \mb{N}$, the $N$-th dilated convolution powers, denoted by $d\scal_{x}^{N}$, of $d\scal_{x}$ is given by 
\[
d\scal_{x}^{N}:=(D_{1/N})_{*}(\scal(x)\ast \cdots \ast \scal(x))=\sum_{\gamma \in \mb{Z}_{+}^{E}}\ell_{\gamma}(Nx)\delta_{\gamma/N},  
\]
where $D_{1/N}:X \to X$ is the dilation $D_{1/N}(x)=x/N$, $x \in X$. 
\begin{defin}
\label{SzDef2}
We define the Szasz function $S_{N}(\varphi)$ associated to a function $\varphi$ on $X$, by
\begin{equation}
\label{ds}
S_{N}(\varphi)(x):=\int_{C}\varphi(z)d\scal_{x}^{N}(z)=\sum_{\gamma \in \mb{Z}^{E}_{+}}
\ell_{\gamma}(Nx)\varphi(\gamma/N) 
\end{equation}
if the sum in the right hand side converges absolutely. 
\end{defin}
By \eqref{ellP}, the Szasz function $S_{N}(\varphi)$ satisfies that 
\begin{equation}
\label{SzaszRiem}
R_{N}(C;\varphi):=\frac{1}{N^{\dim (C)}}\sum_{\gamma \in C \cap \Lambda}\varphi(\gamma/N)=\int_{C}S_{N}(\varphi)(x)\,dx 
\end{equation}
if the sum converges absolutely. 

\subsection{Asymptotics of Szasz functions}
\label{AsymSF}

For each $\mu,\nu \in \mb{Z}_{+}^{E}$ with $\mu \geq \nu$, we define 
\begin{equation}
\label{mdimP}
p_{E}(\mu,\nu)=\prod_{e \in E}p(\mu(e),\nu(e)),  
\end{equation}
where $p(n,k)$ is an integer defined by \eqref{pnk}. 
For each $\mu \in \mb{Z}^{E}_{+}$, we set $\nabla^{\mu}=\prod_{e \in E}\nabla_{e}^{\mu(e)}$ and $|\mu|=\sum_{e \in E}\mu(e)$. 
Then, a relevant asymptotic formula for the Szasz function $S_{N}(\varphi)$ is given as follows.

\begin{prop}
\label{aSzasz}
For each positive integer $r$ and positive number $K$ with $r<K<2r$, 
there exists a positive constant $C_{r,K}$ such that we have 
\begin{equation}
\label{asymptS1}
S_{N}(\varphi)(x)=\sum_{\mu \in \mb{Z}^{E}_{+}\,;\,|\mu| \leq 2r-1}
\frac{\nabla^{\mu}\varphi(x)}{\mu!N^{|\mu|}}J_{\mu}(Nx) +S_{2r,N}(x), 
\end{equation}
where the function $S_{2r,N}(x)$ satisfies the following estimate; 
\begin{equation}
\label{error1}
|S_{2r,N}(x)| \leq C_{r,K}N^{-r}(1+|x|)^{r-K},\quad x \in C,   
\end{equation}
where the norm $|x|$ of $x \in X$ is defined by $|x|^{2}=\sum_{e \in E}x(e)^{2}$, 
and the function $J_{\mu}(x)$ is a polynomial in $x$ of degree at most $[|\mu|/2]$ given by 
\begin{equation}
\label{functJ}
J_{\mu}(x)=\sum_{\nu \in \mb{Z}_{+}^{E}\,;\,\nu \leq [\mu/2]}
p_{E}(\mu,\nu)x^{\nu}, 
\end{equation}
where $[\mu/2] \in \mb{Z}_{+}^{E}$ is defined by $[\mu/2](e)=[\mu(e)/2]$.  
\end{prop}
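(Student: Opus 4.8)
The plan is to follow the one-dimensional argument in the proof of Proposition~\ref{tSzA} closely, exploiting the fact that the weight $\ell_{\gamma}$, the monomial $(\gamma-Nx)^{\mu}$, and the cone $C=\mb{R}_{+}^{E}$ all factor coordinatewise over $E$; this reduces every nontrivial point to the one-dimensional facts already established (Proposition~\ref{tSzA} with $\omega=1$, and Lemma~\ref{spoly}) together with some bookkeeping.

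First I would insert into \eqref{ds} the multi-index Taylor expansion of $\varphi$ about $x$ with integral remainder of order $2r$,
\[
\varphi(\gamma/N)=\sum_{|\mu|\le 2r-1}\frac{\nabla^{\mu}\varphi(x)}{\mu!}(\gamma/N-x)^{\mu}+2r\sum_{|\mu|=2r}\frac{(\gamma/N-x)^{\mu}}{\mu!}\int_{0}^{1}(1-t)^{2r-1}\nabla^{\mu}\varphi\big(x+t(\gamma/N-x)\big)\,dt,
\]
and use $(\gamma/N-x)^{\mu}=N^{-|\mu|}(\gamma-Nx)^{\mu}$. Since $\varphi\in\scal(X)$ and $\sum_{\gamma}\ell_{\gamma}\equiv 1$ (with the Poisson-type tail decay of $\ell_{\gamma}(Nx)$), all the resulting series converge absolutely and may be rearranged; the polynomial part produces the leading terms of \eqref{asymptS1} with $J_{\mu}(y)=\sum_{\gamma\in\mb{Z}_{+}^{E}}\ell_{\gamma}(y)(\gamma-y)^{\mu}$, and $S_{2r,N}(x)$ is by definition the remaining sum.

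Next I would identify $J_{\mu}$ and read off its degree. Because $\ell_{\gamma}(y)=\prod_{e\in E}\tfrac{y(e)^{\gamma(e)}}{\gamma(e)!}e^{-y(e)}$ and $(\gamma-y)^{\mu}=\prod_{e\in E}(\gamma(e)-y(e))^{\mu(e)}$, the sum over $\gamma\in\mb{Z}_{+}^{E}$ factors into a product over $e$ of one-dimensional sums, each equal to the function $J^{1}_{\mu(e)}$ of Proposition~\ref{tSzA}. By that proposition together with Lemma~\ref{spoly}(3), $J^{1}_{n}(t)=\sum_{k=0}^{n}p(n,k)t^{k}$ is a polynomial of degree at most $[n/2]$, since $p(n,k)=0$ for $k>[n/2]$. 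Multiplying over $e$ gives $J_{\mu}(y)=\sum_{\nu\le[\mu/2]}p_{E}(\mu,\nu)y^{\nu}$, which is exactly \eqref{functJ}, of degree at most $\sum_{e}[\mu(e)/2]\le[|\mu|/2]$.

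The hard part will be the remainder estimate \eqref{error1}. Fix $C>0$ with $|\nabla^{\mu}\varphi(z)|\le C(1+|z|)^{-K}$ for all $z\in X$ and $|\mu|=2r$. The one genuinely multi-dimensional input is that, for $x\in C=\mb{R}_{+}^{E}$, $\gamma\in\mb{Z}_{+}^{E}$ and $t\in[0,1]$, the $e$-th coordinate of $x+t(\gamma/N-x)$ equals $(1-t)x(e)+t\gamma(e)/N\ge(1-t)x(e)\ge 0$, so $|x+t(\gamma/N-x)|\ge(1-t)|x|$ uniformly in $\gamma$ and $N$. Hence, for every $\gamma$,
\[
\Big|\int_{0}^{1}(1-t)^{2r-1}\nabla^{\mu}\varphi\big(x+t(\gamma/N-x)\big)\,dt\Big|\le C\int_{0}^{1}s^{2r-1}(1+s|x|)^{-K}\,ds\le C'(1+|x|)^{-K},
\]
where the substitution $s=1-t$ was used and the last inequality follows by splitting the integral at $s=1/|x|$; this is the only place where $K<2r$ is needed, namely to keep $\int_{0}^{1}s^{2r-1-K}\,ds$ finite. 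Pulling this $\gamma$-independent bound out of the sum and using the weighted AM--GM inequality $|(\gamma-Nx)^{\mu}|\le\sum_{e}\tfrac{\mu(e)}{2r}|\gamma(e)-Nx(e)|^{2r}$ together with the fact that $\sum_{k}\ell_{k}(\lambda)(k-\lambda)^{2r}=J^{1}_{2r}(\lambda)$ is a polynomial in $\lambda$ of degree at most $r$ (Proposition~\ref{tSzA}), one obtains $\sum_{\gamma}\ell_{\gamma}(Nx)\,|(\gamma-Nx)^{\mu}|\le C(1+N|x|)^{r}$. Combining and using $1+N|x|\le N(1+|x|)$ for $N\ge 1$,
\[
|S_{2r,N}(x)|\le C N^{-2r}(1+|x|)^{-K}(1+N|x|)^{r}\le C N^{-r}(1+|x|)^{r-K},
\]
which is \eqref{error1}. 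The main obstacle is organizing this last estimate so that the factor $(1+|x|)^{-K}$ coming from the Taylor remainder and the factor $(1+N|x|)^{r}$ coming from the Poisson moments combine to give precisely $N^{-r}(1+|x|)^{r-K}$; everything else is the routine coordinatewise reduction to the one-dimensional Proposition~\ref{tSzA}.
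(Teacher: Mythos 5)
Your proposal is correct and follows essentially the same route as the paper's proof: Taylor expansion with integral remainder, coordinatewise factorization of $\ell_{\gamma}$ and $(\gamma-Nx)^{\mu}$ to reduce $J_{\mu}$ to the one-dimensional case of Proposition \ref{tSzA} and Lemma \ref{spoly}, and the remainder estimate via $|x+t(\gamma/N-x)|\ge(1-t)|x|$ combined with the polynomial growth of the even moments $J_{2\mu}(Nx)$. The only cosmetic differences are that you bound the remainder integral by splitting at $s=1/|x|$ where the paper uses $(1+(1-t)|x|)^{-K}\le(1-t)^{-K}|x|^{-K}$ directly, and you control the moment sum by weighted AM--GM where the paper bounds it by $\sum_{|\mu|=r}J_{2\mu}(Nx)$; both variants use $K<2r$ in the same place and yield the same estimate.
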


\begin{proof}
The proof is the same as that for Proposition \ref{tSzA}. 
Inserting the Taylor expansion 
\[
\begin{gathered}
\varphi(z)=\sum_{\mu \in \mb{Z}_{+}^{E}\,;\,|\mu| \leq 2r-1}\frac{\nabla^{\mu}\varphi(x)}{\mu!}(z-x)^{\mu}
+\sum_{|\mu|=2r}\frac{1}{\mu!}R_{2r,\mu}(z,x)(z-x)^{\mu}, \\
R_{2r,\mu}(z,x)=2r\int_{0}^{1}(1-t)^{2r-1}\nabla^{\mu}\varphi (x+t(z-x))\,dt
\end{gathered}
\]
with $z=\gamma/N$ into the definition \eqref{ds} of the Szasz function $S_{N}(\varphi)$, we have 
\[
S_{N}(\varphi)(x)=\sum_{\mu \in \mb{Z}_{+}^{E}\,;\,|\mu| \leq 2r-1}
\frac{\nabla^{\mu}\varphi (x)}{\mu!N^{|\mu|}}J_{\mu}(Nx)
+S_{2r,N}(x),
\]
where the functions $J_{\mu}(x)$, $S_{2r,\mu}(x)$ are given by 
\[
\begin{gathered}
J_{\mu}(x)=\sum_{\gamma \in \mb{Z}^{E}_{+}}\ell_{\gamma}(Nx)(\gamma-x)^{\mu},\\
S_{2r,N}(x)=\sum_{|\mu|=2r}\frac{1}{\mu!N^{|\mu|}}\sum_{\gamma \in \mb{Z}^{E}_{+}}
\ell_{\gamma}(Nx)R_{2r,\mu}(\gamma/N,x)(\gamma-Nx)^{\mu}. 
\end{gathered}
\]
The formula \eqref{functJ} is easily obtained by the relation 
\[
\sum_{\gamma \in \mb{Z}_{+}^{E}}\frac{\gamma^{\nu}}{\gamma!}x^{\gamma}=e^{\sum_{e \in E}x(e)}
\sum_{\alpha \leq \nu}S_{E}(\nu,\alpha)x^{\alpha},  
\]
which follows from Lemma \ref{spoly}, (1), where $S_{E}(\nu,\alpha)$ is given by 
\begin{equation}
\label{SE}
S_{E}(\nu,\alpha)=\prod_{e \in E}S(\nu(e),\alpha(e)). 
\end{equation}
Next, we estimate the term $S_{2r,N}(x)$. Note that $x$ and $\gamma/N$ are in $C$. 
Thus, we have $|x+t(\gamma/N-x)| \geq (1-t)|x|$ for each $0 \leq t \leq 1$. 
We choose a positive constant $C_{r,K}$ such that $|\nabla^{\mu}\varphi (x)| \leq C_{r,K}(1+|x|)^{-K}$ 
for each $\mu \in \mb{Z}_{+}^{E}$ with $|\mu|=2r$. 
Hence, if $|x| \geq 1$ and $|\mu|=2r$, we have 
\begin{equation}
\label{ss}
|\nabla^{\mu}\varphi(x+t(\gamma/N-x))|  \leq C_{r,K}(1+|x+t(\gamma/N-x)|)^{-K} 
 \leq C_{r,K}(1-t)^{-K}|x|^{-K} 
\end{equation}
Thus, for $|x| \geq 1$, 
we have $|R_{2r,\mu}(\gamma/N,x)| \leq C_{r,K}|x|^{-K} \leq C_{r,K}(1+|x|)^{-K}$,  
where $C_{r,K}$ is a constant. Therefore, we obtain 
\[
|S_{2r,N}(x)| \leq \frac{C_{r,K}}{N^{2r}}(1+|x|)^{-K}
\sum_{\gamma \in \mb{Z}^{E}_{+}}\ell_{\gamma}(Nx)\sum_{|\mu|=2r}\frac{1}{\mu!}|(\gamma -Nx)^{\mu}| 
\leq \frac{C_{r,K}}{N^{2r}}(1+|x|)^{-K}
\sum_{|\mu|=r}J_{2\mu}(Nx). 
\]
As is mentioned above, the function $J_{2\mu}(x)$ with $|\mu|=r$ is a polynomial in $x$ of degree at most  $r$. 
Thus, we have $|J_{\mu}(x)| \leq C_{\mu}|x|^{r}$ where $C_{\mu}$ does not depend on $x$. 
Therefore, we obtain the estimate \eqref{error1}. When $|x| \leq 1$, we estimate $R_{2r,\mu}(\gamma/N,x)$ as 
$|R_{2r,\mu}(\gamma/N,x)| \leq C_{r,K}$, and hence $S_{2r,N}(x)$ is bounded by $C_{r,K}N^{-r} \leq C_{r,K}N^{-r}(1+|x|)^{r-K}$, which 
completes the proof.
\end{proof}

\section{Asymptotic Euler-Maclaurin formula over unimodular cones}
\label{ASCONE}

In this section, we deduce asymptotic Euler-Maclaurin formula of the Riemann sum 
over unimodular cones in a rational space $(X,\Lambda)$. 
At first, we deduce it by using Proposition \ref{aSzasz}. 
The result coincide a well-known result due to Guillemin-Sternberg \cite{GS}. 
We don't need to use a rational inner product on $X$ so far. 
Then, we renormalize each term of the expansion using an integration by parts procedure 
to find explicit form of Berline-Vergne operators. 
This step involves a rational inner product.

\subsection{An Euler-Maclaurin formula for unimodular cones}
\label{GSEM}

As before, let $C$ be a unimodular cone in $X$ with $\dim(C)=\dim(X)$ and let $E$ be the integral basis of $\Lambda$ generating $C$. 
For each $I \subset E$, let $|I|$ be the number of elements in $I$. 
For such $I$, we regard $\mb{Z}_{+}^{I}$ as a subset of $\mb{Z}_{+}^{E}$ consisting of $\alpha \in \mb{Z}_{+}^{E}$ 
with the property that $\alpha(e)=0$ for each $e \in E \setminus I$. Clearly we have $\mb{Z}_{+}^{I} \subset \mb{Z}_{+}^{J}$ if $I \subset J$. 
For any $e \in E$, we define $\lambda_{e} \in \mb{Z}_{+}^{E}$ by 
$\lambda_{e}(e)=1$, $\lambda_{e}(v)=0$, $v \in E \setminus \{e\}$. 
Then, we obviously have $\alpha=\sum_{e \in E}\alpha(e)\lambda_{e}$ for each $\alpha \in \mb{Z}_{+}^{E}$. 
For $\emptyset \neq I \subset E$, we set $\mb{Z}_{>0}^{I}=\{\alpha \in \mb{Z}_{+}^{I}\,;\,\alpha(e) \neq 0,\,e \in I\}$. 
For $I=\emptyset$, we set $\mb{Z}_{>0}^{\emptyset}=\{0\}$. 
Each $I \subset E$ corresponds to a face $C(I)$ of $C$ defined by 
\begin{equation}
\label{facesC}
C(I):=\sum_{e \in E \setminus I}\mb{R}_{+}e, \quad C(E):=\{0\}, 
\end{equation}
and for each face $F$ of $C$, there is a unique $I_{F} \subset E$ such that $F=C(I_{F})$. 
Thus, we identify subsets in $E$ and faces of $C$. Note that $F \subset G$ if and only if $I_{G} \subset I_{F}$. 
For each $\mu,\nu \in \mb{Z}_{+}^{E}$ with $\nu \leq \mu$ and $\emptyset \neq I \subset E$, we set 
\begin{equation}
p_{I}(\mu,\nu):=\prod_{e \in I}p(\mu(e),\nu(e)). 
\end{equation}
If $\mu,\nu \in \mb{Z}_{+}^{I}$, we clearly have $p_{J}(\mu,\nu)=p_{I}(\mu,\nu)$ for each $J$ with $I \subset J$ because $p(0,0)=1$. 
For each $\nu \in \mb{Z}_{+}^{I}$, we set
\begin{equation}
\label{mdimPP}
p_{I}(\nu)=\sum_{\mu \in \mb{Z}_{+}^{I}\,;\,\nu \leq \mu \leq 2\nu}
(-1)^{|\mu|}\frac{(\mu-\nu)!}{\mu!}p_{I}(\mu,\mu-\nu). 
\end{equation}
Then, we have $p_{J}(\nu)=p_{I}(\nu)$ if $\nu \in \mb{Z}_{+}^{I}$ and $I \subset J$. 
Note that $p_{I}(\nu)=\prod_{e \in I}p(\nu(e))$, where we have $p(n)=(-1)^{n-1}c_{n}=(-1)^{n}b_{n}/n!$ as in \eqref{EMn2}, \eqref{Catalan}. 

For each non-negative integer $n$ and a subset $I$ of $E$ with $|I| \leq n$, we define a homogeneous 
differential operator $L_{n}(C;I)$ of order $n-|I|$ on $X$ with constant coefficients by 
\begin{equation}
\label{diffop1}
L_{n}(C;I)=(-1)^{n}\sum_{\nu \in \mb{Z}_{>0}^{I}\,;\,|\nu|=n}p_{I}(\nu)\nabla^{\nu-e(I)}, \qquad
e(I)=\sum_{e \in I}\lambda_{e}\quad (n \geq 1), 
\end{equation}
and $L_{0}(C;\emptyset)=1$. 
When $n \geq 1$ we set $L_{n}(C;I)=0$ for $|I| >n$ or $I=\emptyset$. 

\begin{prop}
\label{uC}
For each $\varphi \in \scal(X)$, we have 
\begin{equation}
\label{aRiemC1}
R_{N}(C;\varphi) \sim \sum_{n \geq 0}N^{-n}\sum_{I \subset E\,;\,|I| \leq n}
(-1)^{|I|}\int_{C(I)}
L_{n}(C;I)\varphi. 
\end{equation}
\end{prop}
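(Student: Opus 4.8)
The plan is to combine the integrated-Szasz-function identity \eqref{SzaszRiem} with the asymptotic expansion of the Szasz function in Proposition \ref{aSzasz}, and then to identify the resulting integral over $C$ of a polynomial-times-derivative expression with a sum over faces of $C$ of integrals of the renormalized operators $L_{n}(C;I)$. First I would fix $r$, choose $K$ with $r<K<2r$, and integrate \eqref{asymptS1} over $C=\mb{R}_{+}^{E}$. The remainder term is controlled by \eqref{error1}: since $|S_{2r,N}(x)| \leq C_{r,K}N^{-r}(1+|x|)^{r-K}$ and $r-K<-r \leq -\dim(C)$ when $K$ is taken large enough (I would in fact require $K>r+\dim(C)$), the integral $\int_{C}(1+|x|)^{r-K}\,dx$ converges, giving $\int_C S_{2r,N}(x)\,dx=O(N^{-r})$; since $r$ is arbitrary this produces a genuine asymptotic expansion. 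So modulo $O(N^{-r})$ we get
\begin{equation}
\label{plan1}
R_{N}(C;\varphi) = \sum_{\mu \in \mb{Z}_{+}^{E}\,;\,|\mu| \leq 2r-1}\frac{N^{-|\mu|}}{\mu!}\int_{C}\nabla^{\mu}\varphi(x)\,J_{\mu}(Nx)\,dx + O(N^{-r}).
\end{equation}

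The next step is to expand $J_{\mu}(Nx)=\sum_{\nu \leq [\mu/2]}p_{E}(\mu,\nu)N^{|\nu|}x^{\nu}$ using \eqref{functJ}, so that the $\mu$-term contributes $\sum_{\nu \leq [\mu/2]}N^{-|\mu|+|\nu|}\frac{p_{E}(\mu,\nu)}{\mu!}\int_{C}x^{\nu}\nabla^{\mu}\varphi(x)\,dx$. Grouping by the power $n:=|\mu|-|\nu|$ of $N^{-1}$ reorganizes the sum; since $|\nu| \leq |\mu|/2$ one has $|\mu| \leq 2n$, so only finitely many $(\mu,\nu)$ contribute to each order $n$, consistent with \eqref{plan1} being an honest asymptotic series. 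The heart of the argument is then an iterated integration by parts in each coordinate direction $e \in E$, moving the factor $x^{\nu}$ past the derivatives $\nabla^{\mu}$ and down to the boundary strata of the orthant. Because $C=\mb{R}_{+}^{E}$ is a product, this factors as a one-dimensional computation for each $e$: $\int_{0}^{\infty}x^{\nu(e)}\partial_{e}^{\mu(e)}\psi\,dx_{e}$, integrated by parts $\mu(e)$ times, produces a bulk term $\int_{0}^{\infty}\partial_{e}^{\mu(e)-\nu(e)}$-type contributions together with boundary contributions at $x_{e}=0$ of the form $\partial_{e}^{\mu(e)-\nu(e)-1}\psi|_{x_e=0}$ and lower; the set of coordinates put to zero is exactly an $I \subset E$, giving rise to integration over the face $C(I)$ (see \eqref{facesC}), with the sign $(-1)^{|I|}$ coming from the parity of the boundary terms, and with the combinatorial weight that collapses — via the definitions \eqref{mdimPP} of $p_{I}(\nu)$ and \eqref{diffop1} of $L_{n}(C;I)$ — to precisely $(-1)^{|I|}\int_{C(I)}L_{n}(C;I)\varphi$. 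I would carry this out carefully for $\dim X=1$ first, where it reproduces \eqref{EMn2}, and then invoke the product structure; the one-dimensional bulk-plus-boundary bookkeeping, organized by $n=|\mu|-|\nu|$, is exactly what the definition of $p_I(\nu)$ and the shift $\nabla^{\nu-e(I)}$ in \eqref{diffop1} are designed to encode.

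The main obstacle I anticipate is the bookkeeping in the integration by parts: keeping track of which boundary strata $C(I)$ arise, with the correct multiplicities and signs, and verifying that after reindexing by $n$ the accumulated coefficients match the closed-form definition \eqref{mdimPP} of $p_{I}(\nu)$. Two subtleties need care. First, one must check that the boundary terms at infinity vanish — this uses $\varphi \in \scal(X)$ and the Gaussian-type decay implicit in $\ell_\gamma$, though in the present formulation we are integrating a polynomial $J_\mu(Nx)$ against a Schwartz function, so the boundary terms at $x_e \to \infty$ vanish outright. Second, the derivative order in \eqref{diffop1} is $n-|I|$, so one must confirm that after the $\mu(e)$ integrations by parts in direction $e \in I$, exactly $\nu(e)-1$ of the remaining derivatives survive on the face (contributing $\nabla_e^{\nu(e)-1}$, matching $\nu-e(I)$), while the $\nu(e)$ factors of $x_e$ have been consumed — this is the one-dimensional identity underlying \eqref{EMn2}. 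Once the one-dimensional case is nailed down, the multidimensional statement follows by the product structure of $C$, $\ell_\gamma$, $J_\mu$, and $p_E$, together with the summation identity $p_I(\nu)=\prod_{e\in I}p(\nu(e))$ already recorded after \eqref{mdimPP}.
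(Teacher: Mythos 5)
Your proposal is correct and follows essentially the same route as the paper: integrate the expansion of Proposition \ref{aSzasz} over $C$ with $K>r+\dim(C)$ to control the remainder, expand $J_{\mu}(Nx)$ and regroup by $n=|\mu|-|\nu|$, and then integrate by parts coordinate-by-coordinate to push terms onto the faces $C(I)$ with sign $(-1)^{|I|}$. The paper merely organizes the integration by parts in two cleaner stages --- first $\int_{C}x^{\nu}\nabla^{\mu}\varphi=(-1)^{|\nu|}\nu!\int_{C}\nabla^{\mu-\nu}\varphi$ (no boundary terms, since $x^{\nu}$ vanishes to the right order on each facet), then $\int_{C}\nabla^{\nu}\varphi=(-1)^{|I|}\int_{C(I)}\nabla^{\nu-e(I)}\varphi$ for $\nu\in\mb{Z}_{>0}^{I}$ --- which is exactly the one-boundary-term-per-coordinate bookkeeping you describe.
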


\begin{proof}
We use Proposition \ref{aSzasz}. We take $r \in \mb{N}$ and $K>0$ so that $r+\dim(X) <K<2r$. 
By the estimate \eqref{error1}, one can integrate the asymptotic expansion \eqref{asymptS1} over $C$. 
Then, by \eqref{SzaszRiem} and \eqref{asymptS1}, we have 
\[
R_{N}(C;\varphi) =
\sum_{\mu,\nu\,;\,|\mu| \leq 2r-1,\nu \leq [\mu/2]}
\frac{1}{\mu!}N^{-|\mu-\nu|}p_{E}(\mu,\nu)\int_{C}x^{\nu}\nabla^{\mu}\varphi +O(N^{-r}).
\]
Integrating by parts, we have $\displaystyle \int_{C}x^{\nu}\nabla^{\mu}\varphi =(-1)^{|\nu|}\nu!\int_{C}\nabla^{\mu-\nu}\varphi$, 
and hence, substituting this into the formula for $R_{N}(C;\varphi)$ above, we obtain
\begin{equation}
\label{As0}
\begin{split}
R_{N}(C;\varphi)&=\sum_{k=0}^{r-1}
N^{-k}\int_{C}L_{k}(C)\varphi +O(N^{-r}), \\
L_{k}(C)&=(-1)^{k}\sum_{\nu \in \mb{Z}_{+}^{E},\,|\nu|=k}p_{E}(\nu)\nabla^{\nu} 
=\sum_{\nu,\mu,\,|\nu|=k,\,\nu \leq \mu \leq 2\nu}(-1)^{|\mu|+k}\frac{(\mu-\nu)!}{\mu!}p_{E}(\mu,\mu-\nu)\nabla^{\nu}. 
\end{split}
\end{equation}
To integrate by parts further in the right hand side, note that 
we have $\mb{Z}_{+}^{E}=\bigcup_{I \subset E}\mb{Z}_{>0}^{I}$, 
which is a disjoint union. For $\nu \in \mb{Z}_{>0}^{I}$, we have 
\[
\int_{C}\nabla^{\nu}\varphi=(-1)^{|I|}\int_{C(I)}\nabla^{\nu-e(I)}\varphi. 
\]
From this, we obtain
\[
\int_{C}L_{k}(C)\varphi=
\sum_{I \subset E\,;\,|I| \leq k}(-1)^{|I|}
\int_{C(I)}L_{k}(C;I)\varphi, 
\]
which shows the assertion. 
\end{proof}

Next, we consider cones containing straight lines. Let $E$ be an integral basis of $\Lambda$, 
and let $I \subset E$.  
Consider the cone $C$ in $X$ of the form
\begin{equation}
\label{genC}
C=\sum_{e \in I}\mb{R}_{+}e+L, 
\end{equation}
where $L$ is a subspace in $X$ spanned by vectors $e \in E \setminus I$. 
If $I=E$, then $L=\{0\}$ and in this case $C$ is a unimodular cone discussed above. 
When $I =\emptyset$, we set $C=X$. 

\begin{lem}
\label{notuC}
Let $E$ be an integral basis of $\Lambda$, and let $C$ be a cone of the form $\eqref{genC}$ with $I \subset E$. 
Then, for each $\varphi \in C_{0}^{\infty}(X)$, we have 
\begin{equation}
\label{NSCas}
R_{N}(C;\varphi)=R_{N}(\pi_{L}(C);(\pi_{L})_{*}\varphi)+O(N^{-\infty}), 
\end{equation}
where $(\pi_{L})_{*}\varphi$ is a compactly supported smooth function on $X/L$ defined by 
\[
(\pi_{L})_{*}\varphi (x)=\int_{\pi_{L}^{-1}(x)}\varphi,\quad x \in X/L. 
\]
\end{lem}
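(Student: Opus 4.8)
The plan is to reduce the statement to a product of one-dimensional computations. Since $C = \sum_{e \in I}\mb{R}_{+}e + L$ with $L = \operatorname{span}\{e : e \in E\setminus I\}$, and $E$ is an integral basis, the lattice $\Lambda$ splits as $\mb{Z}^{I}\oplus(\Lambda\cap L)$, and $X$ splits accordingly as $\mb{R}^{I}\oplus L$. The projection $\pi_{L}:X \to X/L$ identifies $X/L$ with $\mb{R}^{I}$ equipped with the lattice $\mb{Z}^{I}$, and $\pi_{L}(C) = \sum_{e\in I}\mb{R}_{+}e$ is a unimodular cone in $X/L$. First I would write out both Riemann sums explicitly in these coordinates: writing $\gamma\in C\cap\Lambda$ as $\gamma = (\gamma',\gamma'')$ with $\gamma'\in\mb{Z}_{+}^{I}$ and $\gamma''\in \Lambda\cap L$, and $x = (x',x'')\in\mb{R}^{I}\times L$, we have
\[
R_{N}(C;\varphi) = \frac{1}{N^{\dim C}}\sum_{\gamma'\in\mb{Z}_{+}^{I}}\sum_{\gamma''\in\Lambda\cap L}\varphi(\gamma'/N,\gamma''/N).
\]

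The key step is to handle the sum over the full lattice $\Lambda\cap L$ in the $L$-directions, for each fixed $\gamma'$. Here I would invoke the standard fact — already cited in the excerpt as available from \cite{GS}, and also recoverable by an elementary Poisson-summation or Fourier-analytic argument — that for a Schwartz (or compactly supported smooth) function $\psi$ on a rational vector space with its normalized Lebesgue measure, the full-lattice Riemann sum satisfies $\frac{1}{N^{\dim}}\sum_{\gamma\in\Lambda}\psi(\gamma/N) = \int\psi\,dx + O(N^{-\infty})$. Applying this in the $L$-variable uniformly, with the remainder controlled using rapid decay of $\varphi$ and its derivatives (this is where compact support of $\varphi$ is used), I would get
\[
\frac{1}{N^{\dim L}}\sum_{\gamma''\in\Lambda\cap L}\varphi(\gamma'/N,\gamma''/N) = \int_{L}\varphi(\gamma'/N,x'')\,dx'' + O(N^{-\infty}) = (\pi_{L})_{*}\varphi(\gamma'/N) + O(N^{-\infty}),
\]
with the $O(N^{-\infty})$ bound uniform in $\gamma'$ and summable against the remaining $\gamma'$-sum after dividing by $N^{|I|}$. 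Substituting back and recognizing the outer sum as $R_{N}(\pi_{L}(C);(\pi_{L})_{*}\varphi)$ gives the claim; I should also check that $(\pi_{L})_{*}\varphi \in C_{0}^{\infty}(X/L)$, which is immediate since integration over the fibers of a linear projection preserves smoothness and compactness of support.

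The main obstacle is making the uniformity of the $O(N^{-\infty})$ estimate in the $L$-sum precise enough that it survives the outer summation over $\gamma'\in\mb{Z}_{+}^{I}$ and the normalization by $N^{-|I|}$. Concretely, one needs a bound of the form $|\text{error}(\gamma')| \le C_{M}N^{-M}(1+|\gamma'/N|)^{-M}$ for every $M$, so that $N^{-|I|}\sum_{\gamma'}|\text{error}(\gamma')| = O(N^{-M+|I|})$; this follows from tracking constants in the Poisson-summation argument together with the decay of $\varphi$, but it is the only nonroutine point. Everything else — the coordinate splitting, the definition of the pushforward, and the recognition of the resulting sum — is formal bookkeeping given the structure of unimodular cones established earlier in the section.
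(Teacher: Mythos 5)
Your argument is correct, but it is a genuinely different route from the paper's. The paper does not iterate the sum: it first translates $\varphi$ by a lattice vector $v\in\Lambda\cap L$ so that $\supp(T_{v}\varphi)$ meets $C$ only inside the positive orthant, whence $R_{N}(C;\varphi)=R_{N}(\mb{R}_{+}^{E};T_{v}\varphi)$; it then applies the already-established expansion \eqref{As0} for the unimodular cone $\mb{R}_{+}^{E}$ and matches the resulting terms with those of $R_{N}(\pi_{L}(C);\pi_{*}\varphi)$ by observing that every term of $L_{n}(\mb{R}_{+}^{E})$ containing a derivative in an $E\setminus I$ direction integrates to zero (no boundary contribution, by the choice of $v$) and that the surviving derivatives commute with the fiber integration $\pi_{*}$. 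Your proof instead splits $\Lambda=\mb{Z}^{I}\oplus(\Lambda\cap L)$, freezes $\gamma'$, and applies Poisson summation in the $L$-variable; the uniformity issue you flag is handled even more simply than you suggest, since compact support of $\varphi$ makes the fiberwise error vanish identically for $\gamma'/N$ outside a fixed compact set, so only $O(N^{|I|})$ values of $\gamma'$ contribute and a uniform $C_{M}N^{-M}$ bound on each suffices. Your approach is more elementary and self-contained (it does not use Proposition \ref{uC} or the Szasz-function machinery), while the paper's reuses the expansion it has already built and avoids Fourier analysis. One caution: the paper's remark that $R_{N}(\mb{R};\varphi)=\int\varphi+O(N^{-\infty})$ ``(see \cite{GS} or see Lemma \ref{notuC})'' points back at the very lemma you are proving, so you must not lean on that citation for the full-lattice fact; your independent Poisson-summation derivation is what breaks the circularity, and it should be stated as such.
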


\begin{proof}
For simplicity, we write $\pi=\pi_{L}:X \to X/L$ for the natural projection. 
Take $\varphi \in C_{0}^{\infty}(X)$. For any $v \in X$, we set $T_{v}\varphi (x) :=\varphi(x+v)$. 
Let $M$ be the subspace spanned by $I$ so that $X=M \oplus L$. 
We identify $L$ with $\mb{R}^{E \setminus I}$ and $M$ with $\mb{R}^{I}$ in a natural way. 
Then, we can choose $v \in \mb{Z}^{E\setminus I}$ 
so that $\supp(T_{v}\varphi) \subset M +\mb{R}_{>0}^{E \setminus I}$. 
Clearly we have $R_{N}(C;\varphi)=R_{N}(\mb{R}_{+}^{E};T_{v}\varphi)$, where we note that 
$\mb{R}_{+}^{E}$ is a unimodular cone in $X$. Therefore, by \eqref{As0}, we have 
\[
R_{N}(C;\varphi) \sim \sum_{n \geq 0}N^{-n}\int_{\mb{R}_{+}^{E}}L_{n}(\mb{R}_{+}^{E})T_{v}\varphi,
\]
where the differential operator $L_{n}(\mb{R}_{+}^{E})$ is given in \eqref{As0}. 
Note that $\pi(C)$ is a unimodular cone in $X/L$ with respect to the lattice $\pi(\Lambda)$ generated 
by the integral basis $\pi(I)$ of $\pi(\Lambda)$. 
Since $v \in \mb{Z}^{E \setminus I}$, we have $\pi_{*}T_{v}\varphi=\pi_{*}\varphi$. 
Therefore, according to Proposition \ref{uC}, we only need to show that 
\begin{equation}
\label{reduced1}
\int_{\mb{R}_{+}^{E}}L_{n}(\mb{R}_{+}^{E})\psi\,dx
=\int_{\pi(C)}L_{n}(\pi(C))\pi_{*}\psi\,dx 
\end{equation}
for any $\psi \in C_{0}^{\infty}(X)$ with $\supp(\psi) \subset M+\mb{R}_{>0}^{E \setminus I}$. 
If $\nu \in \mb{Z}_{+}^{E}$ has some $e \in E \setminus I$ such that $\nu(e) \geq 1$, then, 
since $\supp(\psi) \cap \mb{R}_{+}^{E \setminus \{e\}}=\emptyset$, we have $\displaystyle \int_{\mb{R}_{+}^{E}}\nabla^{\nu}\psi=0$ 
and hence 
\[
\int_{\mb{R}_{+}^{E}}L_{n}(\mb{R}_{+}^{E})\psi\,dx
=\int_{\mb{R}_{+}^{E}}\tilde{L}_{n}\psi,\quad 
\tilde{L}_{n}=(-1)^{n}\sum_{\nu \in \mb{Z}_{+}^{I}\,;\,|\nu|=n}p_{I}(\nu)\nabla^{\nu}. 
\]
If we denote $\nabla^{\nu}_{\pi}=\prod_{e \in I}\nabla_{\pi(e)}^{\nu(e)}$ for each $\nu \in I$, 
then, by the definition of the function $\pi_{*}\psi$ on $X/L$, we have $\nabla^{\nu}_{\pi}\pi_{*}\psi=\pi_{*}\nabla^{\nu}\psi$ for each $\nu \in \mb{Z}_{+}^{I}$. 
Since $\supp(\psi) \subset M+\mb{R}_{+}^{E \setminus I}$, we obtain, for $\nu \in \mb{Z}_{+}^{I}$, 
\[
\int_{\pi(C)}\nabla_{\pi}^{\nu}\pi_{*}\psi
=\int_{\pi(C)}\pi_{*}\nabla^{\nu}\psi
=\int_{\mb{R}_{+}^{E}}\nabla^{\nu}\psi.  
\]
From this and the definition of $L_{n}(\pi(C))$, we obtain \eqref{reduced1}. 
\end{proof}

\begin{rem}
As is mentioned in Introduction, Proposition \ref{uC} is deduced directly from Theorem 3.2 in \cite{GS}. 
Lemma \ref{notuC} is also obtained in \cite{GS}. 
\end{rem}

\subsection{Integration by parts}
\label{IBP}

In Proposition \ref{uC} and Lemma \ref{notuC} in the previous subsection, 
we have derived an asymptotic formula for the Riemann sums over unimodular cones and their variants. 
In each term in these asymptotic formulas, integration over faces of homogeneous differential operators $L_{n}(C;I)$ 
defined in \eqref{As0} appears. The differential operators $L_{n}(C;I)$ involve derivatives only in directions transversal to the face $C(I)$. 
However, these derivatives are not `perpendicular' to the face $C(I)$, and hence we can perform further integration by parts. 
If one performs integration by parts in \eqref{aRiemC1}, then one will find the differential operators 
which involves derivatives only in directions perpendicular to faces. However, we need to perform this procedure 
systematically to define the operators all at once.  
This step is one of the main points which makes the final formula complicated.

In the rest of this paper, we fix a rational inner product $Q$ on the rational space $(X,\Lambda)$. 
Let $E$ be an integral basis of $\Lambda$. 
For each $I \subset E$, we set 
\begin{equation}
\label{ratsubsp}
X(I)=\bigoplus_{e \in E \setminus I}\mb{R}e \cong \mb{R}^{E \setminus I},\quad 
X(E)=\{0\}.   
\end{equation}
Note that $X=X(\emptyset)$, and if $I \subset J$, then $X(J) \subset X(I)$ and hence $X(I)^{\perp_{Q}} \subset X(J)^{\perp_{Q}}$. 
As before, for each $I \subset E$, we define the unimodular cone $C(I)$ in $X$ by \eqref{facesC}. 
For each $\alpha \in \mb{Z}_{+}^{E}$, we set $\nabla^{\alpha}=\prod_{e \in E}\nabla_{e}^{\alpha(e)}$.

\begin{prop}
\label{Uibp}
There exists a family 
\[
\{L(E;I,J;\alpha)\,;\,\emptyset \neq I \subset J \subset E, \alpha \in \mb{Z}_{+}^{I}, |J| \leq |\alpha| +|I|\}
\]
of homogeneous differential operators $L(E;I,J;\alpha)$ of order $|\alpha|-|J|+|I|$ 
on $X$ with rational constant coefficients which involves derivatives only in directions perpendicular to the 
rational subspace $X(J)$ such that for each $(I,\alpha)$ with $\emptyset \neq I \subset E$, $\alpha \in \mb{Z}_{+}^{I}$, we have 
\begin{equation}
\label{intbyp1}
\int_{C(I)}\nabla^{\alpha}\varphi
=\sum_{J\,;\,I \subset J,\,|J| \leq |\alpha|+|I|}
(-1)^{|J|-|I|}
\int_{C(J)}L(E;I,J;\alpha)\varphi
\end{equation}
for any $\varphi \in \scal(X)$. 
Furthermore, fix $\alpha$ and $\emptyset \neq I \subset E$ with $\alpha \in \mb{Z}_{+}^{I}$.  
Suppose that a family $\{L(J)\,;\,I \subset J \subset E, |J| \leq |\alpha| + |I|\}$ of 
homogeneous differential operators with constant coefficients of order $|\alpha| - |J| + |I|$ 
which involves derivatives only in directions 
perpendicular to $X(J)$ satisfy the equation $\eqref{intbyp1}$ for any $\varphi \in \scal(X)$. 
Then, we have $L(J)=L(E;I,J;\alpha)$. 
\end{prop}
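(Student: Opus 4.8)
The plan is to construct the operators $L(E;I,J;\alpha)$ by a recursion on $|\alpha|$ which, at each step, splits one transversal derivative $\nabla_{e}$ ($e\in I$) into a part that is genuinely perpendicular to $X(I)$ and a part tangent to the face $C(I)$, the latter being disposed of by an integration by parts that produces boundary terms on the faces $C(I\cup\{v\})$; the uniqueness assertion is then proved separately, after existence, by localizing test functions near the relative interiors of the faces.

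For existence, fix a linear order on $E$ and induct on $|\alpha|$, the statement at stage $n$ being that the desired family exists (and obeys the recursion below for $n\ge 1$) for every pair $(I,\alpha)$ with $\emptyset\neq I\subseteq E$ and $\alpha\in\mb{Z}_{+}^{I}$, $|\alpha|=n$. For $n=0$ only $J=I$ occurs and we take $L(E;I,I;0)=1$. For $|\alpha|\ge 1$ let $e\in I$ be the least index with $\alpha(e)\ge 1$, and decompose the basis vector $e$ orthogonally with respect to $Q$ as $e=p_{X(I)}(e)+t_{e}$, where $p_{X(I)}\colon X\to X(I)^{\perp_{Q}}$ is the orthogonal projection and $t_{e}\in X(I)=\bigoplus_{v\in E\setminus I}\R v$, say $t_{e}=\sum_{v\in E\setminus I}c_{v}^{e}v$; the coefficients $c_{v}^{e}$ are rational because $Q$ is a rational inner product and $E\subset\Lambda$. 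By linearity of directional derivatives and the fact that constant-coefficient operators commute,
\[
\int_{C(I)}\nabla^{\alpha}\varphi=\int_{C(I)}\nabla^{\alpha-\lambda_{e}}\bigl(\nabla_{p_{X(I)}(e)}\varphi\bigr)+\sum_{v\in E\setminus I}c_{v}^{e}\int_{C(I)}\nabla_{v}\bigl(\nabla^{\alpha-\lambda_{e}}\varphi\bigr),\qquad \varphi\in\scal(X).
\]
To the first integral we apply the induction hypothesis to the Schwartz function $\nabla_{p_{X(I)}(e)}\varphi$ with the same index set $I$; for each $v$ in the sum we integrate by parts in the $v$-direction, $\int_{C(I)}\nabla_{v}\psi=-\int_{C(I\cup\{v\})}\psi$ (the boundary contribution at infinity vanishes for Schwartz $\psi$), and apply the induction hypothesis with the enlarged index set $I\cup\{v\}$. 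Collecting the terms landing on a fixed face $C(J)$ gives
\[
L(E;I,J;\alpha)=L(E;I,J;\alpha-\lambda_{e})\circ\nabla_{p_{X(I)}(e)}+\sum_{v\in J\setminus I}c_{v}^{e}\,L(E;I\cup\{v\},J;\alpha-\lambda_{e}),
\]
with the first term omitted when $|J|=|\alpha|+|I|$. One checks directly that each side has order $|\alpha|-|J|+|I|$; that it involves only derivatives perpendicular to $X(J)$ (here $I\subseteq J$ gives $X(J)\subseteq X(I)$, hence $X(I)^{\perp_{Q}}\subseteq X(J)^{\perp_{Q}}$, so $\nabla_{p_{X(I)}(e)}$ is itself perpendicular to $X(J)$, while the inductively produced operators are so by construction); that the coefficients remain rational; and that the signs fit, since the integration by parts turns the inductive sign $(-1)^{|J|-|I\cup\{v\}|}$ into $(-1)^{|J|-|I|}$.

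For uniqueness it suffices, by subtracting two admissible families, to show that if homogeneous constant-coefficient operators $M(J)$ (for $I\subseteq J$, $|J|\le|\alpha|+|I|$), each using derivatives only perpendicular to $X(J)$, satisfy $\sum_{J}(-1)^{|J|-|I|}\int_{C(J)}M(J)\varphi=0$ for all $\varphi\in\scal(X)$, then every $M(J)=0$. I prove this by induction on $|J|$. Assume $M(J)=0$ for all $J$ with $I\subseteq J\subsetneq J_{0}$. In the splitting $X=X(J_{0})\oplus X(J_{0})^{\perp_{Q}}$, write points as $(y,z)$ and take $\varphi(y,z)=f(y)g(z)$ with $f\in C_{0}^{\infty}(X(J_{0}))$ supported in the relative interior $\{\sum_{e\in E\setminus J_{0}}t_{e}e\,;\,t_{e}>0\}$ of $C(J_{0})$ and with $\int f\ne 0$, and $g\in\scal(X(J_{0})^{\perp_{Q}})$ arbitrary. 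The point is that, because $E$ is a basis, $C(J)$ meets a sufficiently small neighbourhood of a given compact subset of the relative interior of $C(J_{0})$ only when $J\subseteq J_{0}$; so in the vanishing sum only the faces with $I\subseteq J\subseteq J_{0}$ contribute, those with $J\subsetneq J_{0}$ drop out by the induction hypothesis, and there remains $\int_{C(J_{0})}M(J_{0})\varphi=0$. Since $M(J_{0})$ differentiates only in $z$, this says $\bigl(\int f\bigr)\,[M(J_{0})(\partial_{z})g](0)=0$, hence $[M(J_{0})(\partial_{z})g](0)=0$ for every test function $g$; a constant-coefficient differential operator that annihilates the value at the origin of every test function is zero, so $M(J_{0})=0$.

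The substance of the argument, and the only delicate point, is the bookkeeping in the existence step: the recursion enlarges $I$ while shrinking $\alpha$, so the induction must run over all admissible pairs $(I,\alpha)$ simultaneously, and one must propagate three properties in tandem through the two kinds of term produced at each stage --- the order of each operator, the perpendicularity of its derivatives to $X(J)$, and the matching of the signs $(-1)^{|J|-|I|}$. The apparent dependence of the construction on the chosen least index $e$ is harmless precisely because of the uniqueness statement, so no separate well-definedness argument is required.
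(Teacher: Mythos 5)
Your existence argument is, step for step, the paper's own construction: the orthogonal splitting $e=u(I;e)+\sum_{v}c(I;e,v)v$, the single integration by parts $\int_{C(I)}\nabla_{v}\psi=-\int_{C(I\cup\{v\})}\psi$, and the resulting recursion are exactly \eqref{decoI} and \eqref{opdef2}, and your bookkeeping of order, perpendicularity, rationality and signs is correct. Your uniqueness argument, however, departs from the paper (which passes to symbols and proves uniqueness of the solution of the polynomial identity \eqref{symbol1} by restricting $\xi$ to the subspaces $X(J_{0})^{\perp}$), and it is there that you have a genuine gap.

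The problem is the support of your test function. You take $\varphi=f\otimes g$ with respect to the orthogonal splitting $X=X(J_{0})\oplus X(J_{0})^{\perp_{Q}}$, with $f$ compactly supported in the relative interior of $C(J_{0})$ but $g$ an arbitrary Schwartz function on $X(J_{0})^{\perp_{Q}}$. Then $\supp\varphi$ is (essentially) the unbounded cylinder $\supp f+X(J_{0})^{\perp_{Q}}$, not ``a sufficiently small neighbourhood of a compact subset of the relative interior of $C(J_{0})$'', and this cylinder does meet faces $C(J)$ with $J\not\subseteq J_{0}$ as soon as the generators are not $Q$-orthogonal. Concretely, in $\mb{R}^{3}$ with basis $e_{1},e_{2},e_{3}$ and $Q(e_{2},e_{3})>0$, take $I=\{e_{1}\}$, $J_{0}=\{e_{1},e_{2}\}$, $J_{1}=\{e_{1},e_{3}\}$: the face $C(J_{1})=\mb{R}_{+}e_{2}$ projects $Q$-orthogonally onto all of $\mb{R}_{+}e_{3}\supseteq\supp f$, so $\int_{C(J_{1})}M(J_{1})\varphi$ is generically nonzero and cannot be discarded. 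Since $|J_{1}|=|J_{0}|$, it is not covered by your induction hypothesis either, so what you actually obtain at this stage is $\bigl(\int f\bigr)[M(J_{0})g](0)$ plus uncontrolled contributions from all $J$ with $|J|\ge|J_{0}|$, $J\neq J_{0}$, and the conclusion $M(J_{0})=0$ does not follow. The fix is small: take $g\in C_{0}^{\infty}$ supported in a ball of radius $\varepsilon$ about $0$ in $X(J_{0})^{\perp_{Q}}$, so that $\supp\varphi$ lies within distance $\varepsilon$ of the compact set $\supp f$; for $\varepsilon$ small this genuinely avoids every closed cone $C(J)$ with $J\not\subseteq J_{0}$, the surviving terms are those with $I\subseteq J\subseteq J_{0}$, the proper ones vanish by the induction hypothesis on $|J|$, and the final step still yields $M(J_{0})=0$ because a constant-coefficient operator with $[M(J_{0})g](0)=0$ for all $g\in C_{0}^{\infty}(B_{\varepsilon})$ is zero. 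With that repair your localization argument is a valid, and arguably more elementary, alternative to the paper's symbol computation.
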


Note that a differential operator on $X$ is said to have rational coefficients 
if it has rational coefficients with respect to an (and hence all) integral basis of $\Lambda$. 
We first prove the existence of such family of differential operators. 

\vspace{10pt}

\noindent{{\it Proof of the existence in {\rm Proposition \ref{Uibp}}.}}\hspace{5pt}
For a given $\emptyset \neq I \subset E$ and $e \in I$, we decompose $e$ along with the 
orthogonal decomposition $X=X(I)^{\perp_{Q}} \oplus X(I)$, which is denoted by 
\begin{equation} 
\label{decoI}
e=u(I;e)+\sum_{v \in E \setminus I}c(I;e,v)v,\quad u(I;e) \in X(I)^{\perp_{Q}} \cap X_{\mb{Q}},\quad c(I;e,v) \in \mb{Q},\quad e \in I. 
\end{equation}
We set $u(E;e)=e$ for each $e \in E$.
We construct the operators $L(E;I,J;\alpha)$ inductively as follows. 

\vspace{5pt}

\noindent{(0)} When $|\alpha|=0$, then $|J|=|I|$ and $I \subset J$ implies $I=J$. In this case, we set 
\begin{equation}
\label{opdef0}
L(E;I,I;0)=1. 
\end{equation}

\vspace{5pt}

\noindent{(1)} We take $\emptyset \neq I \subset J \subset E$ and $\alpha \in \mb{Z}_{+}^{I}$ with $|\alpha|=1$ and 
$|J| \leq |\alpha| +|I|$. In this case, $J=I$ or $J=I \cup \{v\}$ with $v \in E \setminus I$ and $\alpha=\lambda_{e}$ with $e \in I$. 
We then define $L(E;I,J;\lambda_{e})$ by 
\begin{equation}
\label{opdef1}
\left\{
\begin{array}{ll}
L(E;I,I;\lambda_{e})=\nabla_{u(I;e)} & (\mbox{when $I=J$}), \\
L(E;I,I \cup\{v\};\lambda_{e})=c(I;e,v) & (\mbox{when $J=I \cup\{v\}$ with $v \in E \setminus I$}). 
\end{array}
\right.
\end{equation}
Then, by the identity
\[
\int_{C(I)}\nabla_{v}\varphi=-\int_{C(I \cup \{v\})}\varphi,\quad v \in E \setminus I,\quad \varphi \in C_{0}^{\infty}(X), 
\]
it is easy to show that the operators $L(E;I,I \cup \{v\};\lambda_{e})$ satisfy
\begin{equation}
\label{intbyp0}
\int_{C(I)}\nabla_{e}\varphi =\int_{C(I)}L(E;I,I;\lambda_{e})\varphi +\sum_{v \in E \setminus I}(-1)\int_{C(I \cup \{v\})}L(E;I,I \cup \{v\};\lambda_{e})\varphi. 
\end{equation}

\vspace{5pt}

\noindent{(2)} Suppose that, for a positive integer $n \geq 2$, 
we have defined differential operators $L(E;I,J;\beta)$ satisfying the formula \eqref{intbyp1}
for any $\varphi \in C_{0}^{\infty}(X)$ for each $I$, $J$, $\beta$ with 
$\emptyset \neq I \subset J \subset E$, $\beta \in \mb{Z}_{+}^{I}$ satisfying $|\beta| \leq n-1$, $|J| \leq |\beta|+|I|$.

\vspace{5pt}

\noindent{(3)} For $\emptyset \neq I \subset J \subset E$ and $\alpha \in \mb{Z}_{+}^{I}$ satisfying 
$|\alpha|=n$ and $|J| \leq n+|I|$, we take $e \in I$ such that $\alpha(e) \geq 1$. Then, 
we can decompose $\alpha$ as 
\begin{equation}
\label{decoA}
\alpha=\lambda_{e}+\beta,\quad \beta \in \mb{Z}_{+}^{I}, \quad |\beta|=n-1. 
\end{equation}
We define $L(E;I,J;\alpha)$ by the formula 
\begin{equation}
\label{opdef2}
L(E;I,J;\alpha)=\left\{
\begin{array}{l}
L(E;I,I;\beta)\nabla_{u(I;e)} 
\hspace{20pt} (\mbox{when $J=I$}), \\
L(E;I,J;\beta)\nabla_{u(I;e)}+\sum_{v \in J \setminus I}c(I;e,v)L(E;I\cup\{v\},J;\beta) \\
\hspace{20pt} (\mbox{when $|I|+1 \leq |J| \leq |I|+|\alpha|-1$}), \\
\sum_{v \in J \setminus I}c(I;e,v)L(E;I\cup\{v\},J;\beta) 
\hspace{20pt} (\mbox{when $|J|=|I|+|\alpha|$}). 
\end{array}
\right.
\end{equation}
A direct computation shows that the differential operators $L(E;I,J;\alpha)$ satisfy \eqref{intbyp1}. 
\hfill$\square$

\vspace{10pt}

Next, we proceed to prove the uniqueness of such family $\{L(E;I,J;\alpha)\}$. 
Fix $\alpha$ and $\emptyset \neq I \subset E$ such that $\alpha \in \mb{Z}_{+}^{I}$. 
If $\alpha=0$, then clearly the operator $L(E;I,I;0)$ satisfying \eqref{intbyp1} is just the constant $1$. 
If $I=E$, then for any $\alpha \in \mb{Z}_{+}^{E}$, the operator $L(E;E,E;\alpha)$ is uniquely determined as $L(E;E,E;\alpha)=\nabla^{\alpha}$. 
Thus, in the following, we assume $\alpha \neq 0$ and $\emptyset \neq I \subsetneq E$. 

If the homogeneous differential operators $\{L(J)\}_{I \subset J \subset E\,;\,|J| \leq |\alpha|+|I|}$ with constant coefficients 
of order $|\alpha|-|J|+|I|$ which involves derivatives only in directions perpendicular to $X(J)$ satisfy the 
equation \eqref{intbyp1} for any $\varphi \in \scal(X)$, then their symbols $\sigma(L(J))$ must satisfy the equation 
\begin{equation}
\label{symbol1}
\begin{split}
&\xi^{\alpha}=\sum_{J\,;\,I \subset J,|J| \leq |\alpha|+|I|}
\sigma(L(J))(\xi)\prod_{e \in J \setminus I}\ispa{\xi,e},\qquad 
\xi^{\alpha}=\prod_{e \in E}\ispa{\xi,e}^{\alpha(e)}, \\
&\sigma(L(J))(\xi)=\sigma(L(J))(p_{J}(\xi)),\qquad \xi \in X^{*}, 
\end{split}
\end{equation}
where the symbol $\sigma(D)$ of a differential operator $D$ on $X$ (with constant coefficients) is 
a polynomial function on $X^{*}$ characterized by $\sigma(D)(\xi)=e_{-\xi}De_{\xi}$, $e_{\xi}(x)=e^{\ispa{\xi,x}}$, $x \in X$, $\xi \in X^{*}$. 
In \eqref{symbol1}, $p_{J}$ denotes the orthogonal projection from $X^{*}$ onto 
the annihilator $X(J)^{\perp}$ of $X(J)$. 
Therefore, to prove the uniqueness in the statement of Proposition \ref{Uibp}, 
it is enough to show the uniqueness of the family of homogeneous polynomials $\{\sigma(L(J))\}$ satisfying \eqref{symbol1}. 
First of all, consider the following expression. 
\begin{equation}
\label{symbolS0}
\sigma(I,I;\alpha)(\xi)=p_{I}(\xi)^{\alpha},  
\end{equation}
\begin{equation}
\label{symbolS1}
\begin{gathered}
\sigma(I,J;\alpha)(\xi)=\frac{p_{J}(\xi)^{\alpha}-\sum_{i=0}^{k-1}\sigma_{i}(I;\alpha)(p_{J}(\xi))}{\prod_{e \in J \setminus I}\ispa{p_{J}(\xi),e}}, 
\quad I \subset J, |J|=k+|I|,\\
\sigma_{i}(I;\alpha)(\xi)=\sum_{J\,;\,I \subset J,|J|=|I|+i}
\sigma(I,J;\alpha)(\xi)\prod_{e \in J \setminus I}\ispa{\xi,e},  
\end{gathered}
\end{equation}
where $k$ is an integer satisfying $1 \leq k \leq |\alpha|$. 
Note that $\sigma_{0}(I;\alpha)=\sigma(I,I;\alpha)=p_{I}(\xi)^{\alpha}$ is a well-defined homogeneous 
polynomial of degree $|\alpha|$ on $X^{*}$. 
Thus, the above equations \eqref{symbolS0}, \eqref{symbolS1} define rational functions $\sigma(I,J;\alpha)$, $\sigma_{i}(I;\alpha)$ 
for $\emptyset \neq I \subset J \subset E$, $\alpha \in \mb{Z}_{+}^{I}$, $|J| \leq |\alpha|+|I|$, $0 \leq i \leq |\alpha|$, which are homogeneous 
of degree $|\alpha|-|J|+|I|$, $|\alpha|$, respectively. 
Note also that the functions $\sigma(I,J;\alpha)$ satisfy the second line of \eqref{symbol1}.

\begin{lem}
\label{polyL}
The functions defined by $\eqref{symbolS0}$, $\eqref{symbolS1}$ are homogeneous polynomials. 
\end{lem}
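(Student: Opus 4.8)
The plan is to prove, by induction on $k=|J|-|I|$, the stronger assertion that $\sigma(I,J;\alpha)$ is a homogeneous polynomial of degree $|\alpha|-k$ (equivalently, that each $\sigma_i(I;\alpha)$ is a homogeneous polynomial of degree $|\alpha|$ for $i\le k$). The base case $k=0$ is immediate, since $\sigma(I,I;\alpha)(\xi)=p_I(\xi)^{\alpha}$ and $\sigma_0(I;\alpha)=\sigma(I,I;\alpha)$ are manifestly polynomial. For the inductive step, assume $\sigma(I,J';\alpha)$ is polynomial whenever $|J'|-|I|\le k-1$; then $\sigma_i(I;\alpha)$ is polynomial for $i\le k-1$, so for $|J|=|I|+k$ the numerator $N_J(\xi):=p_J(\xi)^{\alpha}-\sum_{i=0}^{k-1}\sigma_i(I;\alpha)(p_J(\xi))$ in \eqref{symbolS1} is a polynomial, and it remains only to show it is divisible by $\prod_{e\in J\setminus I}\ispa{p_J(\xi),e}$.

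Since $E$ is a basis of $X$, the images of the vectors $e\in J$ form a basis of $X/X(J)$; under the identification $(X/X(J))^{*}\cong X(J)^{\perp}$ the linear forms $\xi\mapsto\ispa{p_J(\xi),e}$ $(e\in J)$ are the corresponding coordinate forms, hence linearly independent, hence pairwise non-proportional. By unique factorization in the polynomial ring it therefore suffices to show that $N_J$ vanishes on each hyperplane $H_{e_0}:=\{\,\xi\,;\,\ispa{p_J(\xi),e_0}=0\,\}$ with $e_0\in J\setminus I$.

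Fix such an $e_0$ and put $J_0=J\setminus\{e_0\}$, so $I\subset J_0\subset J$ and $|J_0|-|I|=k-1$. Because $X(J_0)=X(J)\oplus\mb{R}e_0$, one has $X(J_0)^{\perp}=X(J)^{\perp}\cap e_0^{\perp}$ and $p_{J_0}=p_{J_0}\circ p_J$; hence on $H_{e_0}$ the element $\eta:=p_J(\xi)$ lies in $X(J_0)^{\perp}$, and so $p_{J_0}(\xi)=\eta$. Next, clearing the denominator in the definition of $\sigma(I,J_0;\alpha)$ (legitimate by the induction hypothesis) gives the polynomial identity $p_{J_0}(\xi)^{\alpha}=\sum_{i=0}^{k-2}\sigma_i(I;\alpha)(p_{J_0}(\xi))+\sigma(I,J_0;\alpha)(\xi)\prod_{e\in J_0\setminus I}\ispa{p_{J_0}(\xi),e}$, valid for all $\xi$; evaluating it at $\xi\in H_{e_0}$ turns its left side into $\eta^{\alpha}$. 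Finally, since $\eta\in X(J_0)^{\perp}$ annihilates every $e\in E\setminus J_0$, in the defining sum $\sigma_{k-1}(I;\alpha)(\eta)=\sum_{J'}\sigma(I,J';\alpha)(\eta)\prod_{e\in J'\setminus I}\ispa{\eta,e}$ (over $J'\supset I$ with $|J'|=|I|+k-1$) every term with $J'\ne J_0$ vanishes, because such a $J'$ necessarily has some $e\in(J'\setminus I)\setminus J_0\subset E\setminus J_0$; thus $\sigma_{k-1}(I;\alpha)(\eta)=\sigma(I,J_0;\alpha)(\eta)\prod_{e\in J_0\setminus I}\ispa{\eta,e}$. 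Combining the last two identities yields $\eta^{\alpha}=\sum_{i=0}^{k-1}\sigma_i(I;\alpha)(\eta)$, i.e.\ $N_J(\xi)=0$ on $H_{e_0}$, which completes the induction; the degree and homogeneity claims are tracked automatically along the way.

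I expect the only real difficulty to be organizational: cleanly arranging the induction on $k=|J|-|I|$, and in the step verifying the two facts that make it work, namely the identification $p_{J_0}(\xi)=p_J(\xi)$ on the vanishing locus $H_{e_0}$ and the observation that exactly one summand survives in $\sigma_{k-1}(I;\alpha)(\eta)$. The commutative-algebra input (divisibility from pairwise-coprime linear factors) and the base case are routine.
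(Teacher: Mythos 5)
Your proof is correct and follows essentially the same route as the paper's: induction on $|J|-|I|$, showing the numerator vanishes on each hyperplane $\{\ispa{p_J(\xi),e_0}=0\}$ by invoking the cleared-denominator identity one level down together with the observation that only the $J'=J_0$ term survives in $\sigma_{k-1}(I;\alpha)(\eta)$, and then concluding divisibility by the product of the (linearly independent, hence coprime) linear forms. The only cosmetic difference is that the paper handles the case $|J|=|I|+1$ separately via the explicit decomposition $p_{I\cup\{u\}}=p_I+q$, whereas your uniform inductive step (starting from $k=0$) covers it automatically.
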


\begin{proof}
First of all, let us examine the function $\sigma(I,J;\alpha)$ with $I \subset J$, $|J|=1+|I|$. 
In this case, we can write $J=I \cup \{u\}$ with some $u \in E \setminus I$. 
By $\eqref{symbolS1}$, we have 
\[
\sigma(I,I \cup \{u\};\alpha)(\xi)=\frac{p_{I \cup \{u\}}(\xi)^{\alpha}-p_{I}(\xi)^{\alpha}}{\ispa{p_{I \cup \{u\}}(\xi), u}}. 
\]
Take $\xi \in X(I \cup \{u\})^{\perp}$, which means that $\ispa{\xi,e}=0$ for each $e \in E \setminus I$, $e \neq u$. 
Thus, there exists an $\eta \in X(I \cup \{u\})^{\perp}$ perpendicular to $X(I)^{\perp}$ with respect to the inner product $Q$ 
such that $Q(\eta,\eta)=1$. Note that $\ispa{\eta,u} \neq 0$. 
Let $q(\xi)=Q(\xi,\eta)\eta$ denote the orthogonal projection onto the one-dimensional subspace $\mb{R}\eta$. 
Then, we have $p_{I \cup \{u\}}=p_{I} +q$, and hence 
\[
\sigma(I,I \cup\{u\};\alpha)(\xi)=
\frac{(p_{I}(\xi)+q(\xi))^{\alpha}-p_{I}(\xi)^{\alpha}}{\ispa{q(\xi),u}}, 
\]
which is a homogeneous polynomial of degree $|\alpha|-1$. Next, to use the induction, suppose that 
the functions $\sigma(I,J;\alpha)$ with $I \subset J$, $|J| \leq k+|I|$ ($1 \leq k \leq |\alpha|-1$) are 
homogeneous polynomials of degree $|\alpha|-|J|+|I|$. By \eqref{symbolS1}, the functions $\sigma_{i}(I;\alpha)$ ($i=0,\ldots,k$) are 
homogeneous polynomials of degree $|\alpha|$. 
Take $J_{0} \subset E$ such that $I \subset J_{0}$, $|J_{0}|=k+1+|I|$. 
Set $f(\xi)=p_{J_{0}}(\xi)^{\alpha}-\sum_{i=1}^{k}\sigma_{i}(I;\alpha)(p_{J_{0}}(\xi))$. 
Note that the polynomial $f$ is determined on $X(J_{0})^{\perp}$. So, let $\xi \in X(J_{0})^{\perp}$. 
Assume that $\ispa{\xi,e}=0$ for some $e \in J_{0} \setminus I$, which means that $\xi \in X(K)^{\perp}$ with $K=J_{0} \setminus \{u\}$. 
Note that $I \subset K \subsetneq J$, $|K|=k+|I|$. Since $\xi \in X(K)^{\perp} \subset X(J_{0})^{\perp}$, we have $p_{J_{0}}(\xi)=\xi$, and hence 
by $\eqref{symbolS1}$, 
\[
\sigma_{k}(I;\alpha)(\xi)=\sigma(I,K;\alpha)(\xi)\prod_{e \in K \setminus I}\ispa{\xi,e}
=\xi^{\alpha}-\sum_{i=0}^{k-1}\sigma_{i}(I;\alpha)(\xi). 
\]
This shows that $f(\xi)=0$ for $\xi \in X(K)^{\perp}$. Thus, the homogeneous polynomial $f(\xi)$ is divisible by the linear function $\ispa{p_{J_{0}}(\xi),e}$ for 
each $e \in J_{0} \setminus I$. Note that the elements in $J_{0} \setminus I$ are linearly independent. 
Therefore, the homogeneous polynomial $f(\xi)$ is divisible by $\prod_{e \in J_{0} \setminus I}\ispa{p_{J_{0}}(\xi),e}$, and hence $\sigma(I,J_{0};\alpha)$ is a 
homogeneous polynomial. This completes the proof. 
\end{proof}

\vspace{5pt}

\noindent{{\it Proof of the uniqueness in} Proposition \ref{Uibp}.}\hspace{5pt}
Fix $\alpha$, $I$ such that $0 \neq \alpha \in \mb{Z}_{+}^{I}$, $\emptyset \neq I \subsetneq E$. 
Suppose that the set of functions $\{s(J)\}_{I \subset J \subset E,|J| \leq |\alpha|+|I|}$, where 
each $s(J)$ is a homogeneous function on $X^{*}$ of degree $|\alpha|-|J|+|I|$, satisfy 
the equation \eqref{symbol1}. 
Let $\sigma(I,J;\alpha)$ denote the homogeneous polynomials defined by \eqref{symbolS0}, \eqref{symbolS1}. 
We need to prove $s(J)=\sigma(I,J;\alpha)$. 
Let $\xi \in X(I)^{\perp}$. Then, we have $\ispa{\xi,e}=0$ for each $e \in E \setminus I$. Thus, by $\eqref{symbol1}$, 
we have $s(I)(\xi)=\sigma(I,I;\alpha)(\xi)$ for each $\xi \in X(I)^{\perp}$. Since $s(I)$ satisfies the 
second equation of \eqref{symbol1}, we have $s(I)=\sigma(I,I;\alpha)$ on $X^{*}$. 
Next, take $J_{0} \subset E$ such that $I \subset J_{0}$, $|J_{0}|=1+|I| \leq |\alpha|+|I|$. 
We write $J_{0}=I \cup \{u\}$. Take $\xi \in X(J_{0})^{\perp}$. 
Then $\ispa{\xi,e}=0$ for each $e \in E \setminus J_{0}$, and hence the function $s(J_{0})$ must satisfy
\[
\xi^{\alpha}=\sigma_{0}(I;\alpha)(\xi)+s(J_{0})(\xi)\ispa{\xi,u},\quad \xi \in X(J_{0})^{\perp}. 
\]
Since the function $s(J_{0})$ satisfies the second line of \eqref{symbol1}, we have
\[
s(J_{0})(\xi)=
\frac{(p_{I}(\xi)+q(\xi))^{\alpha}-p_{I}(\xi)^{\alpha}}{\ispa{q(\xi),u}},\quad J_{0}=I \cup \{u\}, 
\]
where $q$ is the orthogonal projection onto the one-dimensional subspace of $X(J_{0})^{\perp}$ perpendicular to $X(I)^{\perp}$. 
This equation shows $s(J_{0})=\sigma(I,J_{0};\alpha)$ for $J_{0}=I \cup \{u\}$. 
Now, suppose that for any $J \subset E$ with $I \subsetneq J$, $|J| \leq k+|I|$, $k+1 \leq |\alpha|$, 
we have $s(J)=\sigma(I,J;\alpha)$. 
Take $J_{0} \subset E$ with $I \subset J_{0}$, $|J_{0}|=k+1+|I| \leq |\alpha|+|I|$. Take $\xi \in X(J_{0})^{\perp}$. 
Since $\ispa{\xi,e}=0$ for each $e \in E \setminus J_{0}$, the equation \eqref{symbol1} shows
\begin{equation}
\label{symbol2}
\xi^{\alpha}=s_{k}(\xi)+s(J_{0})(\xi)\prod_{e \in J_{0} \setminus I}\ispa{\xi,e}, \quad 
s_{k}(\xi)=\sum_{I \subset J \subsetneq J_{0},\,|J| \leq k+|I|}\sigma(I,J;\alpha)(\xi)
\prod_{e \in J \setminus I}\ispa{\xi,e}, 
\end{equation}
for each $\xi \in X(J_{0})^{\perp}$. If $J \subset E$ with $I \subset J$ satisfy $J \setminus J_{0} \neq \emptyset$, 
then we have $\prod_{e \in J\setminus I}\ispa{\xi,e}=0$ and hence $s_{k}(\xi)=\sum_{i=0}^{k}\sigma_{i}(I;\alpha)(\xi)$. 
Since $s(J_{0})$ satisfies the second line of \eqref{symbol1}, we have $s(J_{0})=\sigma(I,J_{0};\alpha)$. 
\hfill$\square$

\vspace{10pt}

\begin{rem}
One can prove directly that the polynomials $\sigma(I,J;\alpha)$ defined by \eqref{symbolS0}, \eqref{symbolS1} actually a solution to \eqref{symbol1}. 
However, its proof is similar to the proof of the existence in Proposition \ref{Uibp}, and hence we omit it. 
\end{rem}

In the above, we have defined the differential operators $L(E;I,J;\alpha)$ for each $\emptyset \neq I \subset J \subset E$ and 
$\alpha \in \mb{Z}_{+}^{I}$ satisfying $|J| \leq |\alpha|+|I|$. But we need to work on the quotient space 
$X/L$ and the unimodular cone $\pi(C)$ which is the image of a unimodular cone $C$ under 
the natural projection $\pi:X \to X/L$ where $L$ is a subspace spanned by a subset of $E$. 
To state the next lemma, we need to fix some notation. 
Let $E$ be an integral basis of $\Lambda$. 
For $\emptyset \neq K \subset E$, we set, as before, $X(K)=\bigoplus_{v \in E \setminus K}\mb{R}v$. 
Let $\pi_{K}:X \to X/X(K)$ be the natural projection. For each $e \in E$, we set $\ol{e}=\pi_{K}(e)$. 
Then, we have $\pi_{K}(E)=\pi_{K}(K)=\{\ol{e}\,;\,e \in K\}$, and the set $\pi_{K}(K)$ is an integral basis of the lattice $\pi_{K}(\Lambda)$ in $X/X(K)$. 
Note that $\pi_{K}$ is a bijective map from $K$ onto $\pi(E)$. 
For each $\emptyset \neq I \subset K$ and $\alpha \in \mb{Z}_{+}^{I}$, 
denote $\pi_{K}(\alpha) \in \mb{Z}_{+}^{\pi(I)}$ the $\mb{Z}_{+}$-valued function on $\pi_{K}(I)$ defined by 
\begin{equation}
\label{quotientA}
\pi_{K}(\alpha)(\ol{e}):=\alpha(e),\quad e \in I. 
\end{equation}
We note that, for each $\ol{\alpha} \in \mb{Z}_{+}^{\pi(I)}$, there is a unique $\alpha \in \mb{Z}_{+}^{I}$ with 
the property that $\pi_{K}(\alpha)=\ol{\alpha}$. 

\begin{lem}
\label{UD}
In the notation as above, we identify $X/X(K)$ with $X(K)^{\perp_{Q}}$ 
to give $X/X(K)$ the inner product induced by the inner product $Q$ on $X$. 
Then, for each $\emptyset \neq I \subset J \subset K$ and $\alpha \in \mb{Z}_{+}^{I}$ with $|J| \leq |\alpha| +|I|$, we have 
\begin{equation}
\label{updown1}
L(E;I,J;\alpha)=L(\pi_{K}(K);\pi_{K}(I),\pi_{K}(J);\pi_{K}(\alpha))
\end{equation}
where the operator $L(\pi_{K}(K);\pi_{K}(I),\pi_{K}(J);\pi_{K}(\alpha))$ is 
regarded as an operator on $X$ by the identification $X/X(K) \cong X(K)^{\perp_{Q}}$. 
\end{lem}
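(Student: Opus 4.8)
The plan is to prove \eqref{updown1} by induction on $|\alpha|$, reading off the inductive construction \eqref{opdef0}, \eqref{opdef1}, \eqref{opdef2} of the operators on both sides and matching the terms. The one preliminary that must be in place is that every piece of linear--algebraic data entering that construction --- the orthogonal projections $u(I;e)$ and the rational coefficients $c(I;e,v)$ of \eqref{decoI} --- is literally unchanged when $(X,\Lambda,E)$ is replaced by $(X/X(K),\pi_K(\Lambda),\pi_K(K))$. Write $\pi:=\pi_K$, $\ol S:=\pi(S)$ for a subset $S$, $\ol e:=\pi(e)$, and let $p_K:X\to X(K)^{\perp_{Q}}$ be the orthogonal projection; under the identification $X/X(K)\cong X(K)^{\perp_{Q}}$ in the statement, $\pi$ becomes $p_K$, and the subspace of $X/X(K)$ playing the role that $X(I)$ plays in $X$ becomes $\bigoplus_{v\in K\setminus I}\R\,p_K(v)$.

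First I would settle the relevant linear algebra. Since $I\subset K$ forces $X(K)\subset X(I)$, hence $X(I)^{\perp_{Q}}\subset X(K)^{\perp_{Q}}$, I claim that
\[
X(K)^{\perp_{Q}}=X(I)^{\perp_{Q}}\oplus\textstyle\bigoplus_{v\in K\setminus I}\R\,p_K(v)
\]
is an \emph{orthogonal} direct sum: the summands are orthogonal because for $w\in X(I)^{\perp_{Q}}$ and $v\in K\setminus I\subset E\setminus I$ one has $w\perp X(K)$ and $w\perp v$, so $Q(w,p_K(v))=Q(w,v)=0$; and a dimension count ($\dim X(I)^{\perp_{Q}}=|I|$, $\dim X(K)^{\perp_{Q}}=|K|$, $|K\setminus I|=|K|-|I|$) shows the sum exhausts $X(K)^{\perp_{Q}}$. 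Consequently $X(I)^{\perp_{Q}}$ is exactly the orthogonal complement of $\bigoplus_{v\in K\setminus I}\R\,p_K(v)$ inside $X(K)^{\perp_{Q}}$, which under the identification is precisely the space into which the ``$u$--part'' of \eqref{decoI} falls when that decomposition is carried out in the quotient. Applying $p_K$ to \eqref{decoI} --- and noting that $p_K$ fixes $u(I;e)\in X(I)^{\perp_{Q}}$, kills each $v\in E\setminus K$, and sends $v\in K\setminus I$ to $p_K(v)$ --- gives
\[
\ol e=p_K(e)=u(I;e)+\textstyle\sum_{v\in K\setminus I}c(I;e,v)\,p_K(v),\qquad e\in I.
\]
Since $\{p_K(v):v\in K\setminus I\}=\{\ol v:v\in K\setminus I\}$ is linearly independent (it is a subset of the integral basis $\ol E$ of $\pi(\Lambda)$) and $u(I;e)\in X(I)^{\perp_{Q}}$, this is the orthogonal decomposition of $\ol e$ computed in $X/X(K)$ relative to $\ol E$; by uniqueness of that decomposition, the quotient data coincide with $u(I;e)$ and $c(I;e,v)$ for all $e\in I$ and $v\in K\setminus I$.

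With the data matched, the induction on $|\alpha|$ is routine. For $|\alpha|=0$ we have $J=I$ and both sides equal $1$ by \eqref{opdef0}; for $|\alpha|=1$ the two cases of \eqref{opdef1} give $\nabla_{u(I;e)}$ and $c(I;e,v)$ ($v\in J\setminus I\subset K$), which agree by the previous step. For the inductive step, take $\emptyset\neq I\subset J\subset K$ and $\alpha\in\Z_{+}^{I}$ with $2\leq|\alpha|$ and $|J|\leq|\alpha|+|I|$, and choose $e\in I$ with $\alpha(e)\geq1$; then $\ol\alpha(\ol e)=\alpha(e)\geq1$, so $\ol e$ is an admissible choice for the quotient recursion, and $\alpha=\lambda_e+\beta$ corresponds to $\ol\alpha=\lambda_{\ol e}+\ol\beta$ with $|\ol\beta|=|\beta|=|\alpha|-1$. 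Each ingredient of \eqref{opdef2} --- the direction $u(I;e)$, the scalars $c(I;e,v)$ for $v\in J\setminus I\subset K$, and the operators $L(E;I,J;\beta)$ and $L(E;I\cup\{v\},J;\beta)$ (note $I\cup\{v\}\subset K$) --- equals its quotient counterpart: the operators by the induction hypothesis, and the remaining data by the matched projections and coefficients just established. Hence $L(E;I,J;\alpha)=L(\ol E;\ol I,\ol J;\ol\alpha)$. Here one uses that a constant--coefficient operator involving only derivatives perpendicular to $X(J)$, hence along $X(J)^{\perp_{Q}}\subset X(K)^{\perp_{Q}}$, is literally the same object whether read on $X/X(K)\cong X(K)^{\perp_{Q}}$ or on $X$, so the asserted equality is meaningful exactly as written.

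The only genuinely delicate point is the middle step: one must be sure that the orthogonal splitting of $X(K)^{\perp_{Q}}$ implicit in the definition of $L(\ol E;\ol I,\ol J;\ol\alpha)$ is the $p_K$--image of the splitting $X=X(I)^{\perp_{Q}}\oplus X(I)$ used to define $L(E;I,J;\alpha)$ --- that is, that forming orthogonal complements commutes with passing to the quotient in the way required. Once this compatibility of inner--product data is in hand, everything else is bookkeeping off of \eqref{opdef0}--\eqref{opdef2}. (Trying instead to invoke the uniqueness part of Proposition \ref{Uibp} would be more awkward here, since the defining identities \eqref{intbyp1} in $X$ and in $X/X(K)$ live over genuinely different cones.)
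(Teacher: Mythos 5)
Your proof is correct, but it takes a genuinely different route from the paper's. The paper argues entirely at the level of symbols: it observes that for $\xi\in X(K)^{\perp}$ the identity \eqref{symbol1} collapses to
\[
\xi^{\alpha}=\sum_{J\,;\,I\subset J\subset K,\ |J|\leq|\alpha|+|I|}\sigma(I,J;\alpha)(\xi)\prod_{e\in J\setminus I}\ispa{\xi,e},
\]
which is precisely the symbol equation \eqref{symbol1} for the rational space $X(E\setminus K)\cong X/X(K)$; since the symbols of the quotient operators satisfy the same equation, the uniqueness part of Proposition \ref{Uibp} forces the two families to coincide. So your closing parenthetical --- that invoking uniqueness ``would be more awkward here'' because the identities \eqref{intbyp1} live over different cones --- is not quite right: the paper sidesteps the cones entirely by working with the polynomial identity on $X^{*}$ and restricting it to the subspace $X(K)^{\perp}$, where no integration is involved. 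Your alternative is to match the recursive construction \eqref{opdef0}--\eqref{opdef2} step by step, which requires the one piece of linear algebra you correctly isolate and prove: that the orthogonal splitting $X(K)^{\perp_{Q}}=X(I)^{\perp_{Q}}\oplus\bigoplus_{v\in K\setminus I}\mb{R}\,p_{K}(v)$ identifies the data $u(I;e)$, $c(I;e,v)$ of \eqref{decoI} with their quotient counterparts. This costs a little more work up front but buys strictly more information --- the invariance of the raw decomposition data, not just of the assembled operators --- and it makes no appeal to the uniqueness theorem beyond the (already established) fact that the recursion is independent of the choice of $e$ at each step, which you implicitly use when you pick corresponding admissible $e$ and $\ol{e}$ on the two sides. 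Both arguments are sound; the paper's is shorter given the uniqueness machinery already in place, while yours is more self-contained and explicit.
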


\begin{proof}
For simplicity, we set $\pi=\pi_{K}$. 
Let $\sigma_{K}(I,J;\alpha)$ denote the symbol of the differential operator $L(\pi(K);\pi(I),\pi(J);\pi(\alpha))$ 
which is a homogeneous polynomial on $(X/X(K))^{*}$, and note that we have the identification $(X/X(K))^{*} \cong X(K)^{\perp}$ 
under the transpose $\!\,^{t}\pi:(X/X(K))^{*} \to X^{*}$ of $\pi$. 
Then, the symbol of the lift of the differential operator $L(\pi(K);\pi(I),\pi(J);\pi(\alpha))$ is given by 
$\sigma_{K}(I,J;\alpha)(p_{K}(\xi))$, $\xi \in X^{*}$. The symbols of the operators $L(E;I,J;\alpha)$ for $J \subset K$, which 
are as above denoted by $\sigma(I,J;\alpha)$, are determined on $X(K)^{\perp}$. 
By \eqref{symbol1}, we have 
\begin{equation}
\label{symbolQ}
\xi^{\alpha}=\sum_{\stackrel{J\,;\,I \subset J \subset K}{|J| \leq |\alpha| + |I|}}
\sigma(I,J;\alpha)(\xi)\prod_{e \in J \setminus I}\ispa{\xi,e},\quad \xi \in X(K)^{\perp}. 
\end{equation}
In Proposition \ref{Uibp}, we can replace $X$ by $X(E \setminus K)$ which is identified, as a rational space, with $X/X(K)$. 
With this identification, the symbols $\sigma_{K}(I,J;\alpha)$ also satisfy the equation \eqref{symbolQ}. 
Noting that the equation \eqref{symbolQ} is nothing but the equation \eqref{symbol1} on $X(E \setminus K)$, 
and using the uniqueness in Proposition \ref{Uibp}, we conclude the assertion. 
\end{proof}

\subsection{Berline-Vergne operators over unimodular cones}
\label{DOC}

We use the results obtained in the previous subsections and Theorem \ref{BerVer} to find 
an explicit expression of Berline-Vergne operators for unimodular cones.

\begin{defin}
\label{opcone}
\noindent{\rm (1)}\hspace{5pt} Let $C$ be a unimodular cone in a rational space $(X,\Lambda)$ with 
a rational inner product $Q$. 
Assume that $\dim(C)=\dim(X)$. Let $E$ be the integral basis of $\Lambda$ generating $C$. 
For each $F \in \fcal(C)$, we take, as before, a unique $I_{F} \subset E$ such that $F=C(I_{F})$. 
Then, for each $F \in \fcal(C)$ and $n \in \mb{Z}_{+}$ with $\dim(F) \geq \dim(C)-n$, 
we define a homogeneous differential operator $\dcal_{n}^{X}(C;F)$ 
of order $n-\dim(C)+\dim(F)$ with rational constant coefficients 
which involves derivatives only in directions perpendicular to the face $F$ by 
\begin{equation}
\label{ConeD}
\dcal_{n}^{X}(C;F)
:=(-1)^{n-\dim(C)+\dim (F)}\sum_{I \subset I_{F}}
\sum_{\nu \in \mb{Z}_{>0}^{I},\,|\nu|=n}
p_{I}(\nu)L(E;I,I_{F};\nu-e(I)),  
\end{equation}
and $\dcal_{0}^{X}(C;C):=1$, $\dcal_{n}^{X}(C;C):=0$ $(n \geq 1)$. 

\vspace{5pt}

\noindent{\rm (2)}\hspace{5pt} Let $C \subset X$ be a unimodular cone. 
For any $F \in \fcal(C)$ and $n \in \mb{Z}_{+}$ with $n-\dim(C)+\dim(F)$, let $\dcal_{n}^{X}(C;F)$ be the differential operator $\dcal_{n}^{L(C)}(C;F)$ 
regarded as an operator on $X$ through the inclusion $\iota_{C}:L(C) \hookrightarrow X$, where the operator $\dcal_{n}^{L(C)}(C;F)$ is defined as in {\rm (1)} 
replacing $(X,\Lambda)$ by $(L(C),L(C) \cap \Lambda)$. 
\end{defin}

For unimodular cones $C$ in $X$ with $\dim(C) < \dim(X)$, the differential operator $\dcal_{n}^{X}(C;F)$ 
is characterized by the identity $\iota_{C}^{*}\dcal_{n}^{X}(C;F)\varphi=\dcal_{n}^{L(C)}(C;F)\iota_{C}^{*}\varphi$ for $\varphi \in C^{\infty}(X)$. 
Thus, a direct computation using Definition \ref{opcone}, \eqref{intbyp1}, \eqref{diffop1} 
combined with Proposition \ref{uC} (replacing $(X,\Lambda)$ by $(L(C),L(C) \cap \Lambda)$ if necessary) shows the following. 
\begin{theorem}
\label{coneA}
Let $C$ be a unimodular cone in the rational space $(X,\Lambda)$ with a 
rational inner product. Then, for any $\varphi \in \scal(X)$, we have 
\[
R_{N}(C;\varphi) \sim \sum_{n \geq 0}N^{-n}
\sum_{F \in \fcal(C),\,\dim (F) \geq \dim(C)-n}
\int_{F}\dcal_{n}^{X}(C;F)\varphi.  
\]
\end{theorem}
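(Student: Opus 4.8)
The plan is to obtain the formula by substituting the integration by parts identity of Proposition \ref{Uibp} into the asymptotic expansion of Proposition \ref{uC} and collecting the resulting terms by faces. First, since $R_N(C;\varphi)$ depends only on the restriction $\iota_C^*\varphi \in \scal(L(C))$, and since by part (2) of Definition \ref{opcone} (together with the characterization $\iota_C^*\dcal_n^X(C;F)\varphi = \dcal_n^{L(C)}(C;F)\iota_C^*\varphi$ recorded just after it) one has $\int_F \dcal_n^X(C;F)\varphi = \int_F \dcal_n^{L(C)}(C;F)\iota_C^*\varphi$ for every face $F$ of $C$, it is enough to prove the statement in the full-dimensional case $\dim(C)=\dim(X)$. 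So let $E$ be the integral basis of $\Lambda$ generating $C$, identify $X$ with $\mb{R}^E$, and label the faces of $C$ by the sets $C(I)$, $I \subset E$, as in \eqref{facesC}.

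Next I would expand each term of \eqref{aRiemC1}. For $n\geq 1$, \eqref{diffop1} gives $L_n(C;I)\varphi = (-1)^n \sum_{\nu\in\mb{Z}_{>0}^{I},\,|\nu|=n}p_I(\nu)\nabla^{\nu-e(I)}\varphi$, where $\nu-e(I)\in\mb{Z}_{+}^{I}$ and $|\nu-e(I)|+|I| = n$, so \eqref{intbyp1} applies with $\alpha=\nu-e(I)$ and yields
\[
\int_{C(I)}\nabla^{\nu-e(I)}\varphi = \sum_{J\,;\,I\subset J,\,|J|\leq n}(-1)^{|J|-|I|}\int_{C(J)}L(E;I,J;\nu-e(I))\varphi .
\]
Substituting this into \eqref{aRiemC1} and interchanging the finite sums, the coefficient of $N^{-n}$ for $n\geq 1$ takes the form $\sum_{J}\int_{C(J)}(\,\cdot\,)\varphi$, where the operator attached to $C(J)$ is
\[
\Bigl(\sum_{\emptyset\neq I\subset J}(-1)^{|I|}(-1)^n(-1)^{|J|-|I|}\sum_{\nu\in\mb{Z}_{>0}^{I},\,|\nu|=n}p_I(\nu)\,L(E;I,J;\nu-e(I))\Bigr).
\]
Here the first sign comes from the factor $(-1)^{|I|}$ in \eqref{aRiemC1}, the second from $L_n(C;I)$, and the third from \eqref{intbyp1}; they collapse to $(-1)^{n+|J|}$, and since $\dim C(J) = \dim(C)-|J|$ this equals $(-1)^{\,n-\dim(C)+\dim(C(J))}$.

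Comparing with \eqref{ConeD} finishes the full-dimensional case: the operator attached to $F=C(J)$ is precisely $\dcal_n^X(C;F)$ (for $n\geq 1$ the index $I=\emptyset$ in \eqref{ConeD} contributes nothing, since there is no $\nu\in\mb{Z}_{>0}^{\emptyset}$ with $|\nu|=n$, so that sum is effectively over $\emptyset\neq I\subset I_F$); the constraint $|J|\leq n$ is exactly $\dim(F)\geq\dim(C)-n$; the $n=0$ term of \eqref{aRiemC1} is $\int_C\varphi = \int_C\dcal_0^X(C;C)\varphi$; and the face $F=C$ (i.e.\ $J=\emptyset$) never arises from \eqref{intbyp1}, in accordance with $\dcal_n^X(C;C)=0$ for $n\geq 1$. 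The orders also match: $L(E;I,J;\nu-e(I))$ has order $|\nu-e(I)|-|J|+|I| = n-|J| = n-\dim(C)+\dim(F)$, and all its derivatives are perpendicular to $X(J)=L(C(J))$, as they should be.

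Since Proposition \ref{uC} already carries a remainder estimate and \eqref{intbyp1} is an exact identity valid for all $\varphi\in\scal(X)$, the term-by-term substitution preserves the asymptotic expansion and introduces no new analytic issue — the whole proof is a combinatorial reorganization. Accordingly, the only point requiring genuine care is the bookkeeping: the cancellation of the $(-1)^{|I|}$ factors, the matching of index ranges ($\mb{Z}_{>0}^{I}$ versus $\mb{Z}_{+}^{I}$, and $|J|\leq n$ versus $\dim(F)\geq\dim(C)-n$), the correct handling of the boundary faces and of the $n=0$ term, and the clean invocation of part (2) of Definition \ref{opcone} for the reduction to $\dim(C)=\dim(X)$. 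I would carry out these steps explicitly rather than leaving them to the reader.
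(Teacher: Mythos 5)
Your proposal is correct and follows exactly the route the paper intends: the paper's ``proof'' of Theorem \ref{coneA} is the single sentence preceding it, namely that the result follows by a direct computation combining Proposition \ref{uC}, the integration by parts identity \eqref{intbyp1}, the definition \eqref{diffop1}, and Definition \ref{opcone}, together with the reduction to $\dim(C)=\dim(X)$. You have simply carried out that computation explicitly --- the sign bookkeeping $(-1)^{|I|}(-1)^{n}(-1)^{|J|-|I|}=(-1)^{n-\dim(C)+\dim(C(J))}$, the index-range matching, and the treatment of the $n=0$ term and the face $F=C$ are all as the paper requires.
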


Let $C$ be a unimodular cone in the rational space $(X,\Lambda)$, and let $F \in \fcal(C)$. 
The order of the differential operator $\dcal_{n}^{X}(C;F)$ is $n-(\dim(C)-\dim(F))$, and which is equal to the order of 
the differential operator $\dcal_{n}^{X/L(F)}(\pi_{F}(C);0)$. Moreover, we have the following. 

\begin{lem}
\label{TransC1}
Let $C$ be a unimodular cone in $(X,\Lambda)$, and let $F \in \fcal(C)$. 
Then, the operator $\dcal_{n}^{X}(C;F)$ coincides with the lift of the operator $\dcal_{n}^{X/L(F)}(\pi_{F}(C);0)$ on $X$ through 
the identification $X/L(F) \cong L(F)^{\perp_{Q}}$. 
\end{lem}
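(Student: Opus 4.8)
The plan is to reduce the statement of Lemma~\ref{TransC1} to the defining formula \eqref{ConeD} together with the factorization property \eqref{updown1} of Lemma~\ref{UD}. First I would set up notation: let $E$ be the integral basis of $\Lambda$ generating $C$, and let $F=C(I_F)$ for the corresponding $I_F\subset E$. Put $K=I_F$, so that $X/L(F)$ is naturally identified with $X/X(K)\cong X(K)^{\perp_Q}$, and under $\pi_K$ the set $K$ maps bijectively onto an integral basis $\pi_K(K)$ of $\pi_K(\Lambda)$ generating the unimodular cone $\pi_F(C)$ in $X/L(F)$. The key point is that the face $F$ of $C$ corresponds, under this identification, to the zero face of $\pi_F(C)$, i.e. $C(I_F)\mapsto \pi_K(C)(\pi_K(I_F))=\{0\}$, and more generally the subsets $I\subset I_F$ indexing the inner sum in \eqref{ConeD} correspond bijectively to the subsets $\pi_K(I)\subset \pi_K(K)$, with $\mb{Z}_{>0}^{I}\ni\nu\leftrightarrow\pi_K(\nu)\in\mb{Z}_{>0}^{\pi_K(I)}$ preserving $|\nu|$ and hence $p_I(\nu)=p_{\pi_K(I)}(\pi_K(\nu))$, since the Bernoulli-type constants $p_I(\nu)=\prod_{e\in I}p(\nu(e))$ depend only on the multi-index values.

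Next I would compare the two operators term by term. By Definition~\ref{opcone}(1) applied to the unimodular cone $\pi_F(C)$ in $X/L(F)$ with its zero face,
\[
\dcal_{n}^{X/L(F)}(\pi_F(C);0)=(-1)^{n}\sum_{I'\subset \pi_K(K)}\ \sum_{\substack{\nu'\in\mb{Z}_{>0}^{I'}\\ |\nu'|=n}}p_{I'}(\nu')\,L(\pi_K(K);I',\pi_K(K);\nu'-e(I')),
\]
using that here the relevant $I_F$ for the zero face is all of $\pi_K(K)$ and $\dim(\pi_F(C))-\dim(\{0\})=\dim(C)-\dim(F)$, so the sign $(-1)^{n-\dim(C)+\dim(F)}$ in \eqref{ConeD} matches $(-1)^{n}$ here because $|\nu|=n$ forces $n\geq|I|$ and the orders align. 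Re-indexing $I'=\pi_K(I)$, $\nu'=\pi_K(\nu)$ and invoking $p_{\pi_K(I)}(\pi_K(\nu))=p_I(\nu)$, each summand becomes $p_I(\nu)\,L(\pi_K(K);\pi_K(I),\pi_K(I_F);\pi_K(\nu-e(I)))$, noting $e(\pi_K(I))=\pi_K(e(I))$. By Lemma~\ref{UD} (with $K=I_F$, so $X(K)^{\perp_Q}\cong X/L(F)$), this equals $L(E;I,I_F;\nu-e(I))$ regarded as an operator on $X$. Summing, the lift of $\dcal_{n}^{X/L(F)}(\pi_F(C);0)$ to $X$ is exactly the right-hand side of \eqref{ConeD}, i.e. $\dcal_{n}^{X}(C;F)$. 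The degenerate cases ($F=C$, giving $\dcal_0=1$ and $\dcal_n=0$ for $n\geq1$ on both sides) are immediate from the conventions, and the case $\dim(C)<\dim(X)$ follows from part (2) of Definition~\ref{opcone} by replacing $X$ with $L(C)$, since $L(F)\subset L(C)$ and the construction is compatible with the inclusion $\iota_C$.

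The step I expect to be the main obstacle is the bookkeeping that all the identifications are mutually compatible: that the inner product $Q$ restricted to $X(K)^{\perp_Q}$ is the one that defines the operators $L(\pi_K(K);\cdot,\cdot;\cdot)$ on $X/L(F)$ (this is exactly the content of the first sentence of Lemma~\ref{UD}), that the orthogonal projections $p_J$ for $I\subset J\subset I_F$ computed in $X$ agree, after the identification $X/L(F)\cong X(K)^{\perp_Q}$, with those computed in $X/L(F)$, and that $L(f)$ for the face $F=C(I_F)$ is precisely $X(I_F)$ so that $\pi_F=\pi_{I_F}=\pi_K$ — all of which is a matter of unwinding the definitions in Subsection~\ref{NOT1} and Subsection~\ref{IBP} but requires care with the several layers of quotient and orthogonal-complement identifications. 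Once these are in place, the proof is a direct substitution using Lemma~\ref{UD}, and I would present it essentially as the short chain of equalities above, remarking that the only non-formal input is the uniqueness statement in Proposition~\ref{Uibp} which underlies Lemma~\ref{UD}.
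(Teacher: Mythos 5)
Your argument is essentially the paper's own proof: both apply Definition \ref{opcone}(1) to the unimodular cone $\pi_{F}(C)$ in $X/L(F)$ with its zero face, re-index the resulting sum through the bijections $I\mapsto\pi_{K}(I)$, $\nu\mapsto\pi_{K}(\nu)$ (which preserve $|\nu|$ and the constants $p_{I}(\nu)$), and then invoke Lemma \ref{UD} with $K=I_{F}$ to identify $L(\pi_{K}(K);\pi_{K}(I),\pi_{K}(I_{F});\pi_{K}(\nu-e(I)))$ with $L(E;I,I_{F};\nu-e(I))$ as operators on $X$. One correction: the prefactor in your display for $\dcal_{n}^{X/L(F)}(\pi_{F}(C);0)$ should be $(-1)^{n-\dim(\pi_{F}(C))}$ (Definition \ref{opcone} applied with the face $\{0\}$), not $(-1)^{n}$; since $\dim(\pi_{F}(C))=\dim(C)-\dim(F)$, this is precisely the sign $(-1)^{n-\dim(C)+\dim(F)}$ appearing in \eqref{ConeD}, which is the actual reason the two sides agree --- your stated justification for the sign match (that $|\nu|=n$ forces $n\geq|I|$ and ``the orders align'') is not relevant to the sign. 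With that prefactor corrected, the rest of your chain of equalities goes through exactly as in the paper.
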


\begin{proof}
Let $K$ be the integral basis of $L(C) \cap \Lambda$ generating $C$, and let $F=C(I_{F})$ with a subset $I_{F}$ of $K$. 
Then, the cone $\pi_{F}(C)$ in the rational space $(X/L(F),\pi_{F}(\Lambda))$ is a unimodular cone with 
the generator $\ol{I_{F}}=\{\ol{e}\,;\,\ol{e}=\pi_{F}(e),\ e \in I_{F}\}$. Thus, by Definition \ref{opcone}, we have 
\[
\dcal_{n}^{X/L(F)}(\pi_{F}(C);0)=(-1)^{n-\dim(\pi_{F}(C))}\sum_{\ol{I} \subset \ol{I_{F}}}
\sum_{\ol{\nu} \in \mb{Z}_{>0}^{\ol{I}}\,;\,|\ol{\nu}|=n}
p_{\ol{I}}(\ol{\nu})L(\ol{I_{F}};\ol{I},\ol{I_{F}};\ol{\nu}-e(\ol{I})). 
\]
The subsets $\ol{I}$ of $\ol{K}$ correspond to the subsets $I$ of $I_{F}$ by the projection $\pi_{F}$, and 
the elements $\ol{\nu}$ in $\mb{Z}_{+}^{\ol{I}}$ corresponds to the elements $\nu$ in $\mb{Z}_{+}^{I}$. 
Therefore, Lemma \ref{UD} shows 
\[
L(\ol{I_{F}};\ol{I},\ol{I_{F}};\ol{\nu}-e(\ol{I}))=L(\pi_{F}(I_{F});\pi_{F}(I),\pi_{F}(I_{F});\pi_{F}(\nu-e(I)))=L(K;I,I_{F};\nu-e(I))
\]
as an operator on $X$. From this, the assertion follows. 
\end{proof}

\begin{example}
In one dimension, it is easy to compute the differential operators $\dcal_{n}^{X}(C;F)$. 
Let $X$ be a $1$-dimensional vector space with the lattice $\Lambda$. 
Let $u \in \Lambda$ be a generator and set $C=\mb{R}_{+}u$. The faces of $C$ are $0$ and $C$ itself. 
Then, $E=\{u\}$. 
By definition, we have $\dcal_{0}^{X}(C;C)=1$, $\dcal_{n}^{X}(C;C)=0$ $(n \geq 1)$. 
By \eqref{opdef2}, we have $L(E;\{u\},\{u\};k)=\nabla_{u}^{k}$ for $k \in \mb{Z}_{+}$. 
Thus, by Definition \ref{opcone}, we have 
\begin{equation}
\label{1DconeN}
\dcal_{n}^{X}(C;0)=(-1)^{n-1}p(n)\nabla_{u}^{n-1}=-\frac{b_{n}}{n!}\nabla_{u}^{n-1},\quad n \geq 1, 
\end{equation}
and its symbol is given by $-\frac{b_{n}}{n!}\ispa{\xi,u}^{n-1}$, and 
we have $\displaystyle R_{N}(C;\varphi) \sim \int_{C}\varphi -\sum_{n \geq 1}\frac{b_{n}}{n!}\nabla_{u}^{n-1}\varphi(0)$.
\end{example}

\begin{theorem}
\label{BVvsT}
For each unimodular cone $C$ in a rational space $(X,\Lambda)$, each face $F \in \fcal(C)$ and 
each non-negative integer $n$ such that $\dim(F) \geq \dim(C)-n$, we have 
\[
\dcal_{n}^{X}(C;F)=D_{n}^{X}(C;F), 
\]
where $\dcal_{n}^{X}(C;F)$ is the differential operator defined in {\rm Definition \ref{opcone}} and 
$D_{n}^{X}(C;F)$ is the Berline-Vergne operator defined in {\rm Definition \ref{opconeG}}. 
\end{theorem}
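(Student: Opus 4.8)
The plan is to derive Theorem~\ref{BVvsT} from the uniqueness statement of Theorem~\ref{BerVer}. Concretely, I would consider the family $\{\dcal_n^X(C)\}$ indexed by unimodular cones $C$ in rational quotients $X/L$ of $X$, where $\dcal_n^X(C)$ is the operator produced by Definition~\ref{opcone} (applied with $(X,\Lambda,Q)$ replaced by $(X/L,\pi_L(\Lambda),Q)$, with the face taken to be the apex $\{0\}$, and then lifted to $X$ via $X/L\cong L^{\perp_{Q}}$); write $\nu_n^X(C)$ for its symbol. Since a differential operator with rational constant coefficients is determined by its symbol, it is enough to check that this family satisfies hypotheses (1)–(4) of Theorem~\ref{BerVer}: the theorem then forces $\nu_n^X(C)=\mu_X^{n-\dim(C)}(C)$, which is exactly the symbol of the Berline–Vergne operator $D_n^X(C;0)$, so $\dcal_n^X(C;0)=D_n^X(C;0)$ whenever $C$ spans $X$.

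Next I would verify the four hypotheses in turn. Hypothesis~(1), lifting from a rational quotient, holds by the way the family is constructed, with Lemma~\ref{UD} ensuring consistency under iterated quotients (the operators $L(E;I,J;\alpha)$ agree with their analogues over $X/X(K)$). Hypothesis~(2), that a non-spanning cone $C\subset X$ gives the operator obtained from $\dcal_n^{L(C)}(C)$ through the inclusion $\iota_C:L(C)\hookrightarrow X$, i.e. symbol ${}^t\iota_C^*\nu_n^{L(C)}(C)$, is built into part~(2) of Definition~\ref{opcone}. Hypothesis~(3) is the normalization: in dimension $0$ one has $\dcal_0^X(\{0\})=1$ and $\dcal_n^X(\{0\})=0$ for $n\ge 1$ by definition, while in dimension $1$ the computation \eqref{1DconeN} gives precisely $\dcal_n^X(\R_{+}u;0)=-\frac{b_n}{n!}\nabla_u^{n-1}$. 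Hypothesis~(4) is the asymptotic Euler–Maclaurin expansion over unimodular cones, which is Theorem~\ref{coneA}; here one invokes Lemma~\ref{TransC1} to recognise the operator $\dcal_n^X(C;F)$ appearing there as the lift of $\dcal_n^{X/L(F)}(\pi_F(C);0)$, i.e. as the operator written $\dcal_n^X(\pi_F(C))$ in Theorem~\ref{BerVer}.

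Once Theorem~\ref{BerVer} yields $\dcal_n^X(C;0)=D_n^X(C;0)$ for every unimodular cone $C$ in every rational quotient of $X$, I would finish with a general face $F\in\fcal(C)$: Lemma~\ref{TransC1} identifies $\dcal_n^X(C;F)$ with the lift through $X/L(F)\cong L(F)^{\perp_{Q}}$ of $\dcal_n^{X/L(F)}(\pi_F(C);0)$, which by the previous step equals $D_n^{X/L(F)}(\pi_F(C);0)$, and the remark following Definition~\ref{opconeG} identifies $D_n^X(C;F)$ with the same lift of $D_n^X(\pi_F(C);0)$; when $\dim(C)<\dim(X)$ one first runs the argument inside $L(C)$, using that the Berline–Vergne symbols satisfy the analogous subspace-restriction property recorded at the start of the proof of Theorem~\ref{BerVer}. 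I do not expect a deep obstacle here — the substantive content is already carried by Theorems~\ref{BerVer} and \ref{coneA} — but the delicate point will be the bookkeeping: pinning down the family $\{\nu_n^X\}$ so that conditions (1) and (2) of Theorem~\ref{BerVer} hold simultaneously over all rational quotients and all non-spanning cones, which is exactly where Lemmas~\ref{UD} and \ref{TransC1} are used.
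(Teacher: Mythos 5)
Your proposal is correct and follows essentially the same route as the paper: define the lifted family $\dcal_{n}^{X}(C)$, verify hypotheses (1)--(4) of Theorem \ref{BerVer} using respectively the construction (with Lemma \ref{UD}), Definition \ref{opcone}(2), the one-dimensional \emph{Example} \eqref{1DconeN}, and Theorem \ref{coneA} together with Lemma \ref{TransC1}, and then invoke the uniqueness. Your explicit final paragraph reducing a general face $F$ to the apex case via Lemma \ref{TransC1} spells out a step the paper leaves implicit, but it is the same argument.
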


\begin{proof}
For any rational space $(X,\Lambda)$, any rational subspace $L$ in $X$, any unimodular cone $C$ in $X/L$ and 
any non-negative integer $n$ satisfying $n \geq \dim(C)$, define the operator $\dcal_{n}^{X}(C)$ on $X$ by 
the lift of $\dcal_{n}^{X/L}(C;0)$ to $X$ under the identification $X/L \cong L^{\perp_{Q}}$. 
We need to check that these operators satisfy the conditions in Theorem \ref{BerVer}. 
The condition (1) in Theorem \ref{BerVer} follows from this definition. 
The condition (4) in Theorem \ref{BerVer} follows from Theorem \ref{coneA} and Lemma \ref{TransC1}. 
The condition (3) follows from {\it Example} above. The condition (2) follows from Definition \ref{opcone}. 
Therefore, the assertion follows from Theorem \ref{BerVer}. 
\end{proof}

\section{Asymptotic Euler-Maclaurin formula over rational cones}
\label{DRCONE}

In this section, we derive an asymptotic Euler-Maclaurin formula of $R_{N}(C;\varphi)$ for general rational cone $C$. 
To discuss asymptotic expansion of $R_{N}(C;\varphi)$ for pointed rational cones $C$, 
we define, for such a cone $C$ and non-negative integer $n$, the distribution $A_{n}(C;\cdot) \in \scal'(X)$ by 
\begin{equation}
\label{distA}
A_{n}(C;\varphi):=\sum_{F \in \fcal(C),\dim(F) \geq \dim(C)-n}\int_{F}D_{n}^{X}(C;F)\varphi,  
\end{equation}
where $D_{n}^{X}(C;F)$ is the Berline-Vergne operator defined in Definition \ref{opconeG}. 

\begin{lem}
\label{VdistA}
Let $\{C_{i}\}_{i=1}^{d}$ be a family of pointed rational cones in a rational space $(X,\Lambda)$ satisfying $\sum_{i}r_{i}\chi(C_{i})=0$, 
where, for each subset $S \subset X$, $\chi(S)$ denotes the characteristic function of $S$. 
We set $m=\max_{i}\dim(C_{i})$. 
Suppse further that there exists a vector $\eta \in X^{*}$ such that $\ispa{\eta,x}<0$ for each $\displaystyle 0 \neq x \in \cup_{i}C_{i}$. 
Then, for each $\varphi \in \scal(X)$, we have 
\begin{equation}
\label{VdistAeq}
\sum_{i,\dim(C_{i}) \geq m-n}r_{i}A_{n-m+\dim(C_{i})}(C_{i};\varphi)=0. 
\end{equation} 
\end{lem}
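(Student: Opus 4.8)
The statement asserts a "valuation" (inclusion–exclusion) property for the distributions $A_n$ attached to Berline–Vergne operators: if $\sum_i r_i\chi(C_i)=0$ as functions on $X$, then the corresponding sum of $A_{n-m+\dim(C_i)}(C_i;\cdot)$ vanishes on Schwartz functions. The natural strategy is to realize $A_n(C;\varphi)$ as the coefficient of an asymptotic expansion of a linear functional that is \emph{manifestly} additive under the relation $\sum_i r_i\chi(C_i)=0$, and then extract the vanishing coefficientwise. The obvious candidate functional is the Riemann sum $R_N(C;\varphi)=N^{-\dim(C)}\sum_{\gamma\in C\cap\Lambda}\varphi(\gamma/N)$, since $\gamma\mapsto\sum_i r_i\chi(C_i)(\gamma)=0$ gives $\sum_i r_i N^{\dim(C_i)}R_N(C_i;\varphi)=0$ identically in $N$ (for $\varphi\in\scal(X)$, convergence is not an issue because the $C_i$ are pointed and the summand decays rapidly).

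\textbf{Key steps.} First I would invoke the asymptotic Euler–Maclaurin formula for pointed rational cones — this is exactly what Section~\ref{DRCONE} establishes, using Theorem~\ref{coneA}/\ref{BVvsT} on unimodular pieces together with a subdivision of each $C_i$ into unimodular cones (and the canonical decomposition of characteristic functions). Concretely, for each pointed rational cone $C$,
\[
N^{\dim(C)}R_N(C;\varphi)\sim \sum_{n\ge 0} N^{\dim(C)-n}\,A_n(C;\varphi)\qquad(N\to\infty),
\]
with $A_n(C;\varphi)$ defined by \eqref{distA}. Next, multiply by $r_i$, sum over $i$, and use $\sum_i r_i N^{\dim(C_i)}R_N(C_i;\varphi)=0$ for every $N$. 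Reindexing the right-hand side by the total power $N^{m-n}$ (where $m=\max_i\dim(C_i)$, so $\dim(C_i)-n_i = m-n$ forces $n_i = n-m+\dim(C_i)\ge 0$, i.e.\ only cones with $\dim(C_i)\ge m-n$ contribute), the coefficient of $N^{m-n}$ in the asymptotic expansion of $0$ is
\[
\sum_{i:\,\dim(C_i)\ge m-n} r_i\, A_{n-m+\dim(C_i)}(C_i;\varphi).
\]
Since an asymptotic expansion of the zero function has all coefficients zero, this sum vanishes for every $n$ and every $\varphi\in\scal(X)$, which is \eqref{VdistAeq}.

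\textbf{Main obstacle.} The delicate point is whether the asymptotic expansion for a single pointed rational cone $C$ is actually available at this stage of the paper in the exact form above, valid for all Schwartz $\varphi$ — this is the content of Section~\ref{DRCONE} and presumably rests on Lemma~\ref{VdistA} itself being used to \emph{glue} the unimodular-cone expansions across a subdivision (so there is a potential circularity to avoid). The clean way around this is to prove Lemma~\ref{VdistA} \emph{before} invoking the general cone expansion, using only the unimodular result: pick a common refinement of all the $C_i$ into unimodular cones $\{U_\alpha\}$; express each $\chi(C_i)$, via the canonical decomposition \eqref{Eu}/\eqref{decoC}, as an integer combination of $\chi(U_\alpha)$ and $\chi(U_\alpha')$ where $U_\alpha'$ are cones obtained by adjoining faces (these contribute lower-dimensional, hence $N^{-\infty}$-negligible, terms to the Riemann sum, cf.\ Lemma~\ref{notuC}); then $\sum_i r_i\chi(C_i)=0$ forces a corresponding linear relation among the $\chi(U_\alpha)$, and Theorem~\ref{coneA} gives a genuine asymptotic expansion term-by-term for unimodular cones. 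Equating coefficients of $N^{m-n}$ in the identity $\sum_i r_i N^{\dim(C_i)}R_N(C_i;\varphi)=0$, now expanded purely through unimodular pieces, yields \eqref{VdistAeq}; the remaining bookkeeping is to check that the unimodular-cone contributions reassemble into $A_{n-m+\dim(C_i)}(C_i;\cdot)$, which is exactly the additivity already built into the Berline–Vergne operators (their symbols $\mu^k$ satisfy the valuation property, Theorem~20 in \cite{BeV}). The only real care needed is uniformity of remainder estimates so that coefficient extraction from the asymptotic expansion is legitimate — but that uniformity is supplied by the Szász-function estimate \eqref{error1} underlying Proposition~\ref{uC}.
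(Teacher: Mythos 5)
Your high-level idea --- exploit the exact identity $\sum_i r_i N^{\dim(C_i)}R_N(C_i;\varphi)=0$ and equate coefficients of asymptotic expansions --- is natural, and you correctly sense the danger of circularity, but your workaround does not escape it. After refining all the $C_i$ into unimodular cones $U_\alpha$ and writing $\chi(C_i)=\sum_\alpha s_{i\alpha}\chi(U_\alpha)$, Theorem \ref{coneA} gives an asymptotic expansion of $N^{\dim(C_i)}R_N(C_i;\varphi)$ whose coefficient of $N^{\dim(C_i)-n}$ is $\tilde A_n(C_i;\varphi):=\sum_{\alpha}s_{i\alpha}A_{n-\dim(C_i)+\dim(U_\alpha)}(U_\alpha;\varphi)$; equating coefficients in the vanishing sum then yields \eqref{VdistAeq} with $\tilde A$ in place of $A$. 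The ``remaining bookkeeping'' of showing $\tilde A_n(C_i;\cdot)=A_n(C_i;\cdot)$ is precisely Lemma \ref{VdistA} applied to the relation $\chi(C_i)-\sum_\alpha s_{i\alpha}\chi(U_\alpha)=0$ --- that is, it is an instance of the statement you are trying to prove (and it is also exactly how the paper later deduces Theorem \ref{coneAG} from this lemma). Appealing to ``the additivity built into the Berline--Vergne operators'' at that point is not a proof: the operators $D^X_n(C_i;F)$ are defined via the $\mu$-functions of the transverse cones of $C_i$, and nothing you have established relates these to the $\mu$-functions of the cells $U_\alpha$ of the subdivision. A further slip: the lower-dimensional cells produced by inclusion--exclusion are not $O(N^{-\infty})$-negligible (Lemma \ref{notuC} concerns cones containing straight lines, not faces); they contribute at every order $N^{\dim(U_\alpha)-k}$ and cannot be discarded.

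The non-circular route, which is the one the paper takes, works exactly rather than asymptotically and never mentions Riemann sums. The valuation property holds for the generating function $S$: $\sum_i r_iS(C_i)(\xi)=0$ (see \cite{BP}), and combining this with the Berline--Vergne decomposition \eqref{ind1} gives $\sum_i\sum_{G\in\fcal(C_i)}r_i\,\mu(\pi_G(C_i))(\xi)I(G)(\xi)=0$. Replacing $\xi$ by $t\xi$ and Taylor-expanding in $t$ isolates, for each $n$, the identity $\sum_{i,G}r_i\,\mu^{n-m+\dim(G)}(\pi_G(C_i))(\xi)I(G)(\xi)=0$. Evaluating at $\xi=i\xi'+\eta$ --- this is where the hypothesis on $\eta$ enters, making $e_{i\xi'+\eta}$ integrable on every face $G$ --- and using Fourier inversion to write $\int_GD_{n-m+\dim(C_i)}(C_i;G)\varphi$ as $(2\pi)^{-m}\int_{X^*}I(G)(i\xi'+\eta)\,\mu^{n-m+\dim(G)}(\pi_G(C_i))(i\xi'+\eta)\,\hat\varphi(\xi'-i\eta)\,d\xi'$ converts this into the distributional identity \eqref{VdistAeq}. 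Note that your argument never uses the hypothesis on $\eta$ at all, which is another indication that it is not tracking the mechanism that actually makes the lemma true.
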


\begin{proof}
Since $A_{k}(C_{i};\cdot)$ are distributions and $C_{0}^{\infty}(X)$ is dense in $\scal(X)$, 
it is enough to prove \eqref{VdistAeq} for each $\varphi \in C_{0}^{\infty}(X)$. 
Note that the function $S(C)$ defined in \eqref{SandI} have a valuation property (see \cite{BP}). 
By this and the equation \eqref{ind1}, we have 
\[
\sum_{i}\sum_{G \in \fcal(C_{i})}r_{i}\mu (\pi_{G}(C_{i}))I(G)=0, 
\]
where the subscript $X/L(G)$ in $\mu_{X/L(G)}$ is dropped since these functions are lift to $X^{*}$. 
Substituting $t\xi$ ($t \in \mb{R}$, $\xi \in X^{*}$) in these functions and taking the Taylor expansion of each function, we have 
\[
\sum_{k \geq -m}\sum_{i,G \in \fcal(C_{i}),\dim(G)+k \geq 0}t^{k}\mu^{\dim(G)+k}(\pi_{G}(C_{i}))(\xi)I(G)(\xi)=0.  
\]
Thus, each coefficient of $t^{k}$ in the above vanishes, and hence we have 
\begin{equation}
\label{isym}
\sum_{i,G \in \fcal(C_{i}),\dim(G) \geq m-n}r_{i}\mu^{n-m+\dim(G)}(\pi_{G}(C_{i}))(i\xi +\eta)I(G)(i\xi +\eta)=0 
\end{equation}
for each $n \geq 0$, where, $\eta \in X^{*}$ is as in the statement of the lemma and $\xi \in X^{*}$ is arbitrary. 
Let $\varphi \in C_{0}^{\infty}(X)$. We have 
\begin{equation}
\label{chcont}
D_{n-m+\dim(C_{i})}(C_{i};G)\varphi (x)
=\frac{1}{(2\pi)^{m}}\int_{X^{*}}e_{i\xi+\eta}(x)\mu^{n-m+\dim(G)}(\pi_{G}(C_{i}))(i\xi+\eta)\hat{\varphi}(\xi-i\eta)\,d\xi,
\end{equation}
where the Lebesgue measure $d\xi$ on $X^{*}$ is normalized as in Subsection \ref{Heu}. 
Taking the integral over $G$, we have 
\begin{equation}
\label{dverI}
\int_{G}D_{n-m+\dim(C_{i})}(C_{i};G)\varphi=(2\pi)^{-m}\int_{X^{*}}I(G)(i\xi+\eta)\mu^{n-m+\dim(G)}(\pi_{G}(C_{i}))(i\xi+\eta)\hat{\varphi}(\xi-i\eta)\,d\xi, 
\end{equation}
where we have used the fact that $e_{i\xi+\eta}$ is integrable on $G$ for each $i$ and $G \in \fcal(C_{i})$. 
Thus, multiplying \eqref{dverI} by $r_{i}$, taking the sum over all $i$ and $G \in \fcal(C_{i})$ with $\dim(G) \geq m-n$ and 
using the equation \eqref{isym}, we have \eqref{VdistAeq}. 
\end{proof}

\begin{theorem}
\label{coneAG}
Let $C$ be a pointed rational cone in a rational space $(X,\Lambda)$ with a rational inner product $Q$. 
Then, for any $\varphi \in \scal(X)$, we have 
\[
R_{N}(C;\varphi) \sim \sum_{n \geq 0}N^{-n}A_{n}(C;\varphi),
\]
where $A_{n}(C;\varphi)$ is defined in $\eqref{distA}$. 
Furthermore, the uniqueness statement of {\rm Theorem \ref{BerVer}} still true if we replace 
the unimodular cones in the statement of {\rm Theorem \ref{BerVer}} with the pointed rational cones. 
\end{theorem}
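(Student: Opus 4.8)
The plan is to obtain the expansion from the unimodular case treated in Section~\ref{ASCONE} by means of a unimodular subdivision of $C$, and then to use Lemma~\ref{VdistA} to recognize the resulting finite combination of the distributions $A_{k}(\,\cdot\,)$ as $A_{n}(C;\cdot)$. The uniqueness assertion will follow by rerunning the proof of Theorem~\ref{BerVer} with ``unimodular cone'' replaced everywhere by ``pointed rational cone''.

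For the first step I would recall that, by \cite{F}, Section~2.6, the pointed rational cone $C$ admits a subdivision into unimodular cones $\sigma_{1},\dots,\sigma_{r}$ of dimension $m:=\dim(C)$; these cones together with all of their faces are unimodular cones contained in $C$. Applying inclusion-exclusion on the face posets (the M\"obius function of a cone is $(-1)^{\dim\sigma-\dim\tau}$) to the set decomposition $C=\bigcup_{j}\operatorname{relint}(\sigma_{j})\ \cup\ (\text{proper faces})$ produces a finite identity of characteristic functions
\[
\chi(C)=\sum_{i}r_{i}\,\chi(C_{i}),\qquad r_{i}\in\Z,
\]
in which the $C_{i}$ are unimodular cones contained in $C$, so $\dim(C_{i})\le m$. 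Since $\varphi\in\scal(X)$ the relevant sums converge absolutely, and evaluating at $\gamma/N$ and summing over $\Lambda$ gives $R_{N}(C;\varphi)=\sum_{i}r_{i}N^{\dim(C_{i})-m}R_{N}(C_{i};\varphi)$. Each $C_{i}$ being unimodular, Theorem~\ref{coneA} together with Theorem~\ref{BVvsT} yields $R_{N}(C_{i};\varphi)\sim\sum_{k\ge0}N^{-k}A_{k}(C_{i};\varphi)$ with $A_{k}(C_{i};\cdot)$ as in \eqref{distA}.

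Substituting this into the preceding identity (a finite sum of asymptotic expansions) and putting $n=k+m-\dim(C_{i})$, one obtains
\[
R_{N}(C;\varphi)\sim\sum_{n\ge0}N^{-n}\sum_{i\,;\,\dim(C_{i})\ge m-n}r_{i}\,A_{n-m+\dim(C_{i})}(C_{i};\varphi).
\]
It remains to identify the inner sum with $A_{n}(C;\varphi)$. Since $C$ is pointed there is $\eta\in X^{*}$ with $\ispa{\eta,x}<0$ for all $0\ne x\in C$, and the same $\eta$ separates every $C_{i}\subset C$; hence Lemma~\ref{VdistA}, applied to the relation $\chi(C)-\sum_{i}r_{i}\chi(C_{i})=0$ (whose cones have dimensions $\le m$, with $C$ occurring with coefficient $1$), gives exactly $A_{n}(C;\varphi)=\sum_{i\,;\,\dim(C_{i})\ge m-n}r_{i}A_{n-m+\dim(C_{i})}(C_{i};\varphi)$, which is the claimed expansion. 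For the uniqueness statement I would reproduce the proof of Theorem~\ref{BerVer} verbatim, replacing ``unimodular cone'' by ``pointed rational cone'' throughout and inducting on $\dim(X)$. This is legitimate because every structural fact used there survives the replacement: the transverse cone $\mf{t}(C,F)=\pi_{F}(C)$ of a cone along a face is again a pointed rational cone, lying in the rational quotient $X/L(F)$ of dimension $<\dim(X)$ when $\dim(F)>0$, so the class of pointed rational cones in rational quotients is stable under the operations $C\mapsto\mf{t}(C,F)$ and $C\mapsto C\subset L(C)$ on which the induction rests; a pointed cone always admits a separating $\xi$, giving $S(C)(\xi/N)=N^{\dim(C)}R_{N}(C;e_{\xi})$; and the identity \eqref{ind1}, with the analyticity of $\mu$ at the origin, holds for arbitrary rational affine cones. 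The base case $\dim(X)\le1$ is unchanged, since in dimension one a pointed rational cone is $\{0\}$ or $\R_{+}u$ for a generator $u$ of $\Lambda$. Alternatively one may combine the first part with Theorem~\ref{BerVer} for unimodular cones: the hypothesized operators satisfy condition~(4) in particular for unimodular cones, hence agree there with the Berline-Vergne operators, and comparing condition~(4) for a general pointed rational $C$ with the expansion just established, together with a further induction on $\dim(C)$ to cancel the contributions of the proper faces $\pi_{F}(C)$, pins down the remaining symbol as $\mu_{X}^{n-\dim(C)}(C)$.

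I expect the only genuine difficulty to be bookkeeping: arranging that the power shifts $N^{\dim(C_{i})-m}$ line up with the index shift $k\mapsto n=k+m-\dim(C_{i})$ in the expansions, and verifying that the identity furnished by Lemma~\ref{VdistA} is exactly $A_{n}(C;\varphi)$ rather than a partial sum. No new analytic input is needed beyond the Szasz estimates of Section~\ref{SZASZF} and Lemma~\ref{VdistA}.
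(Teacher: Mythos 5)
Your proposal is correct and follows essentially the same route as the paper: reduce to $\dim(C)=\dim(X)$, take a unimodular subdivision, write $\chi(C)=\sum_{\sigma}r_{\sigma}\chi(\sigma)$, apply Theorem \ref{coneA} to each unimodular piece, and invoke Lemma \ref{VdistA} on the relation $\chi(C)-\sum_{\sigma}r_{\sigma}\chi(\sigma)=0$ to identify the resulting coefficient with $A_{n}(C;\varphi)$; the uniqueness part is likewise handled in the paper by rerunning the argument of Theorem \ref{BerVer}. Your explicit remarks on the separating functional $\eta$ and on the stability of the class of pointed rational cones under passage to transverse cones are exactly the points the paper leaves implicit.
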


\begin{proof}
By replacing $X$ with $L(C)$, we may assume that $m:=\dim(C)=\dim(X)$. 
It is well-known that, for any pointed rational cone $C$, one can find a finite set of unimodular cones $\ccal=\{\sigma_{i}\}_{i=1}^{d}$ 
such that $\ccal$ is a subdivision of the pointed cone $C$, namely, the collection $\ccal$ satisfies the following. 
\[
(1) \hspace{5pt} \ccal=\bigcup_{\sigma \in \ccal}\fcal(\sigma) \qquad
(2) \hspace{5pt} \sigma,\tau \in \ccal \Longrightarrow \sigma \cap \tau \in \fcal(\sigma) \cap \fcal(\tau)  \qquad 
(3) \hspace{5pt} C=\bigcup_{\sigma \in \ccal}\sigma 
\]
(For a proof of this fact, see \cite{F}, Section 2.6.) 
By the inclusion-exclusion principle, there is a relation $\chi(C)=\sum_{\sigma \in \ccal}r_{\sigma}\chi(\sigma)$ with some $r_{\sigma} \in \mb{Z}$. 
Then, we have 
\[
R_{N}(C;\varphi)
=\frac{1}{N^{m}}\sum_{\sigma \in \ccal}\sum_{\gamma \in \Lambda}r_{\sigma}\chi(\sigma)(\gamma)\varphi(\gamma/N) 
=\sum_{\sigma \in \ccal}N^{-m+\dim(\sigma)}r_{\sigma}R_{N}(\sigma;\varphi). 
\]
Each cone $\sigma \in \ccal$ is a unimodular cone in $X$, and hence 
we can apply Theorem \ref{coneA} to $R_{N}(\sigma;\varphi)$ for each $\sigma \in \ccal$. 
By a direct computation, we have 
\[
R_{N}(C;\varphi) \sim \sum_{n \geq 0}N^{-n}\sum_{\sigma \in \ccal,\dim(\sigma) \geq m-n}r_{\sigma}A_{n-m+\dim(\sigma)}(\sigma;\varphi), 
\]
and hence Lemma \ref{VdistA} shows the first part of the assertion. 
The last assertion on the uniqueness follows from the same discussion as in Theorem \ref{BerVer}, and 
hence we omit the proof. 
\end{proof}

In the next section, we need the following lemma, which generalizes Lemma \ref{notuC}. 

\begin{lem}
\label{notpt}
Let $C$ be a rational cone in a rational space $(X,\Lambda)$. Let $L=C \cap (-C)$. 
Then, for any $\varphi \in \scal(X)$, we have 
\[
R_{N}(C;\varphi)=R_{N}(\pi_{L}(C);(\pi_{L})_{*}\varphi)+O(N^{-\infty}). 
\]
\end{lem}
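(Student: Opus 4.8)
The statement generalizes Lemma \ref{notuC} from the special cones \eqref{genC} (those generated by part of an integral basis together with a coordinate subspace) to an arbitrary rational cone $C$ with lineality space $L=C\cap(-C)$. The natural approach is to reduce to the case already handled, using the subdivision machinery from the proof of Theorem \ref{coneAG}. First I would replace $X$ by $L(C)$, so that we may assume $\dim(C)=\dim(X)$; this does not change either side. Then I would choose a finite subdivision $\ccal=\{\sigma_i\}$ of $C$ into unimodular cones (possibly containing lines), as in \cite{F}, Section 2.6, arranged so that each $\sigma_i$ contains $L$ in its lineality space — equivalently, each $\sigma_i$ is of the form \eqref{genC} with $L$ spanned by a subset of the relevant integral basis. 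Such a subdivision exists because one can first quotient by $L$, subdivide the resulting pointed cone $\pi_L(C)$ into unimodular cones, and pull back. Writing $\chi(C)=\sum_i r_i\chi(\sigma_i)$ via inclusion-exclusion, linearity of the Riemann sum gives $R_N(C;\varphi)=\sum_i N^{-\dim(X)+\dim(\sigma_i)}r_i R_N(\sigma_i;\varphi)$, and since each $\sigma_i$ has full dimension (its lineality space $L$ is nontrivial but $\dim\sigma_i=\dim X$ when $\sigma_i$ appears with the top power; lower-dimensional faces contribute genuinely lower powers of $N$), the dominant terms are the full-dimensional $\sigma_i$.

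For each full-dimensional $\sigma_i$ of the form \eqref{genC}, Lemma \ref{notuC} gives $R_N(\sigma_i;\varphi)=R_N(\pi_L(\sigma_i);(\pi_L)_*\varphi)+O(N^{-\infty})$. Summing over $i$ with the weights $r_i$, the error terms are each $O(N^{-\infty})$ and there are finitely many, so they combine to $O(N^{-\infty})$. It remains to identify $\sum_i r_i R_N(\pi_L(\sigma_i);(\pi_L)_*\varphi)$ with $R_N(\pi_L(C);(\pi_L)_*\varphi)$. This follows because $\{\pi_L(\sigma_i)\}$ is a subdivision of the pointed cone $\pi_L(C)$ in $(X/L,\pi_L(\Lambda))$, and applying $\pi_L$ to the identity $\chi(C)=\sum_i r_i\chi(\sigma_i)$, followed by a fiber integration argument identical to the one used in the proof of Lemma \ref{notuC} (the relation $\chi(\pi_L(C))=\sum_i r_i\chi(\pi_L(\sigma_i))$ holds because the fibers of $\pi_L$ restricted to $C$ and to each $\sigma_i$ are translates of $L$, so characteristic functions push forward compatibly), one obtains the desired matching of Riemann sums on $X/L$.

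\textbf{Main obstacle.} The delicate point is the existence of a subdivision of $C$ into unimodular cones each of which retains all of $L$ in its lineality space, rather than merely subdividing $L$ itself into lower-dimensional pieces; the standard toric subdivision algorithm subdivides pointed cones, so one must apply it to $\pi_L(C)$ and lift, checking that the lifted cones are indeed of the form \eqref{genC} with respect to a common integral basis of $\Lambda$ adapted to $L$. This requires that $L$ be a rational subspace spanned by lattice vectors that extend to an integral basis, which holds since $L=C\cap(-C)$ is rational and $L\cap\Lambda$ is a primitive sublattice. A secondary, purely bookkeeping obstacle is ensuring that the contributions of the non-top-dimensional $\sigma_i$ (those with $\dim\sigma_i<\dim X$) are absorbed correctly: these are pointed or lower-dimensional rational cones appearing with strictly smaller powers of $N$, and one must argue — as in Theorem \ref{coneAG} via Lemma \ref{VdistA} — that their total contribution matches the corresponding lower-order terms on the $X/L$ side, or more simply observe that the whole computation can be organized so that only the identity $\chi(C)=\sum r_i\chi(\sigma_i)$ with full-dimensional $\sigma_i$, pushed forward by $\pi_L$, is needed. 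I would favor the latter, cleaner route: choose the subdivision so that $\ccal$ consists only of the maximal cones together with their faces, use Lemma \ref{notuC} only on the maximal cones, and invoke the pushforward identity for characteristic functions directly.
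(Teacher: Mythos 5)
Your plan is essentially the paper's proof: the paper picks a rational complement $W$ with $\Lambda=(L\cap\Lambda)\oplus(W\cap\Lambda)$, subdivides the pointed rational cone $G=C\cap W$ into unimodular cones $\sigma$ so that $\chi(C)=\sum_{\sigma}r_{\sigma}\chi(L+\sigma)$, applies Lemma \ref{notuC} to each $C_{\sigma}=L+\sigma$ (each being of the form \eqref{genC}), and reassembles using the valuation identity $\chi(\pi_{L}(C))=\sum_{\sigma}r_{\sigma}\chi(\pi_{L}(\sigma))$ — exactly your ``quotient, subdivide, lift'' route. The obstacle you flag about non-maximal pieces does not arise: every cone $L+\sigma$ in the decomposition, maximal or not, is of the form \eqref{genC} inside its own linear span, so Lemma \ref{notuC} applies to all of them uniformly and the powers of $N$ match term by term on the $X/L$ side, with no need for a Lemma \ref{VdistA}-type cancellation argument.
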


\begin{proof}
If $L=\{0\}$, we have the conclusion without the term $O(N^{-\infty})$. 
So, we assume that $L \neq \{0\}$. For simplicity, we write $\pi=\pi_{L}:X \to X/L$, the natural projection. 
Take $\varphi \in C_{0}^{\infty}(X)$. 
Since $L$ is rational, one can take a complementary rational subspace $W$ to $L$ such that $X=L \oplus W$ and $\Lambda=(L \cap \Lambda) \oplus (W \cap \Lambda)$. 
Set $G=C \cap W$, which is a pointed rational cone in $W$. We have $C=L +G$. 
Take a subdivision $\ccal$ of $G$ into unimodular cone in $W$. The set $\{C_{\sigma}=L +\sigma\,;\,\sigma \in \ccal\}$ 
is a subdivision of $C$ into rational cones. 
Then, there is a relation $\chi(C)=\sum_{\sigma \in \ccal}r_{\sigma}\chi(C_{\sigma})$, and hence 
\[
R_{N}(C;\varphi)=\sum_{\sigma \in \ccal}N^{-m+\dim(C_{\sigma})}r_{\sigma}R_{N}(C_{\sigma};\varphi). 
\]
Note that the cones $C_{\sigma}$ is of the form discussed in Lemma \ref{notuC}. 
By Lemma \ref{notuC}, we have $R_{N}(C_{\sigma};\varphi)=R_{N}(\pi(\sigma);\pi_{*}\varphi)+O(N^{-\infty})$, and hence 
\[
R_{N}(C;\varphi)=\sum_{\sigma \in \ccal}N^{-m+\dim(\sigma)+\dim(L)}r_{\sigma}R_{N}(\pi(\sigma);\pi_{*}\varphi)+O(N^{-\infty}). 
\]
The set $\{\pi(\sigma)\,;\,\sigma \in \ccal\}$ is a subdivision of the pointed rational cone $\pi(C)$ in $X/L$ into unimodular cones. 
Furthermore, since $\chi(C)=\sum_{\sigma \in \ccal}r_{\sigma}\chi(C_{\sigma})$ we have $\chi(\pi(C))=\sum_{\sigma \in \ccal}r_{\sigma}\chi(\pi(C_{\sigma}))$. 
($\pi$ defines a valuation. See \cite{BP}.) 
Thus, the sum in the right hand side of the last equation is $R_{N}(\pi(C);\pi_{*}\varphi)$, which proves the assertion. 
\end{proof}

\section{Results and their proofs}
\label{RPC}

In this section, we restate Theorem \ref{lAEMT} on the asymptotic Euler-Maclaurin formula of the Riemann sum 
\[
R_{N}(P;\varphi):=\frac{1}{N^{\dim(P)}}\sum_{\gamma \in (NP) \cap \Lambda}\varphi(\gamma/N), 
\]
for a lattice polytope $P$ in a rational space $(X,\Lambda)$ and a smooth function $\varphi$ on $P$ 
in the abstract notation we used as before 
and give its proof. We also state and give proofs of its corollaries.

\subsection{Main theorems and their proofs}
\label{MTP}

\begin{theorem}
\label{AEMTh}
Let $P$ be a lattice polytope in a rational space $(X,\Lambda)$ with a rational inner product. 
For each $f \in \fcal(P)$ and $n \in \mb{Z}_{+}$ satisfying $\dim(f) \geq \dim(P)-n$, let $D_{n}^{X}(P;f)$ 
be the differential operator defined in {\rm Definition \ref{opconeG}}. 
Then, for each $\varphi \in C^{\infty}(P)$, we have the following asymptotic expansion$:$
\begin{equation}
\label{AEMc}
R_{N}(P;\varphi) \sim \sum_{n \geq 0}A_{n}(P;\varphi)N^{-n}, \qquad 
A_{n}(P;\varphi) =\sum_{f \in \fcal(P)\,;\,\dim(f) \geq \dim(P)-n}\int_{f}D_{n}^{X}(P;f)\varphi. 
\end{equation}
\end{theorem}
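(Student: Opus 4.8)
The plan is to reduce the polytope case to the already-established cone case (Theorem~\ref{coneAG}) using the canonical decomposition of the characteristic function $\chi(P)$ into characteristic functions of the supporting (tangent) cones at the vertices, combined with the valuation property of the Berline-Vergne distributions proved in Lemma~\ref{VdistA}. Concretely, I would first localize: since $P$ is compact and $\varphi \in C^\infty(P)$, extend $\varphi$ to $\varphi \in C_0^\infty(X)$, and by replacing $X$ with $L(P)$ assume $m := \dim(P) = \dim(X)$. The starting point is the Brianchon-Gram / Lawrence-Varchenko type identity
\[
\chi(P) = \sum_{v \in \vcal(P)} \chi(C_v), \qquad C_v := v + C_P(\{v\}),
\]
the decomposition of the characteristic function of $P$ into the tangent cones at its vertices, taken modulo characteristic functions of cones containing lines (this is the decomposition alluded to in the introduction via the equations labelled \eqref{Eu}, \eqref{decoC}; more precisely one uses a polarization so that the cones containing lines drop out, or one keeps them and invokes that their Riemann sums are $O(N^{-\infty})$ by Lemma~\ref{notpt}). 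Each $C_v$ is a pointed rational affine cone with vertex $v \in \Lambda$.

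Next I would apply Theorem~\ref{coneAG} to each pointed rational cone $C_v - v = C_P(\{v\})$ and translate: since $v \in \Lambda$, the dilation and translation behave well, $(NP) \cap \Lambda$ restricted near $Nv$ corresponds to $(NC_P(\{v\})) \cap \Lambda$ shifted by $Nv$, and the translation-invariance of the Berline-Vergne operators (noted after Definition~\ref{opconeG}, citing \cite{BeV}) gives
\[
R_N(C_v; \varphi) \sim \sum_{n \geq 0} N^{-n} A_n(C_v; \varphi), \qquad A_n(C_v;\varphi) = \sum_{F \in \fcal(C_v),\, \dim F \geq m-n} \int_F D_n^X(C_v; F)\varphi,
\]
where now the faces $F$ of $C_v$ are exactly the translated tangent cones $C_P(\{v\}) + v$ along the faces $f \ni v$ of $P$, and $\mf{t}(C_v, F)$ coincides with $\mf{t}(P,f)$ for the corresponding face $f$, so $D_n^X(C_v;F) = D_n^X(P;f)$. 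Summing over $v$ and the decomposition identity then formally yields
\[
R_N(P;\varphi) \sim \sum_{n \geq 0} N^{-n} \sum_{v \in \vcal(P)} A_n(C_v;\varphi).
\]

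The key combinatorial step is then to show that regrouping the right-hand side by faces $f$ of $P$ (rather than by vertices $v$) produces exactly $\sum_f \int_f D_n^X(P;f)\varphi$, i.e.\ $\sum_v A_n(C_v;\varphi) = A_n(P;\varphi)$. A face $f$ of $P$ appears in $A_n(C_v;\varphi)$ for every vertex $v \in f$, contributing $\int_{\langle f\rangle} D_n^X(P;f)\varphi$ each time — but the integration in $A_n(C_v;\varphi)$ is over the whole affine cone $\langle f\rangle + C_v$-face, not over the bounded face $f$ itself, so there is a genuine mismatch of integration domains that must be absorbed. This is precisely where the valuation identity of Lemma~\ref{VdistA} enters: the family of affine cones $\{\langle f \rangle + C_v\text{-faces}\}$ over the vertices $v$ of a fixed face $f$ satisfies a Brianchon-Gram relation within the affine hull $\langle f\rangle$ (the tangent cones of the polytope $f$ at its own vertices decompose $\chi(f)$), and applying Lemma~\ref{VdistA} to that relation (in the rational space $L(f)$, with the operators $D_n^{L(f)}$) collapses the sum of integrals over the unbounded cones to the single integral over $f$. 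Carrying this out face-by-face, in increasing order of dimension, and bookkeeping the degree shifts $n - \dim(P) + \dim(f)$ correctly, gives the claimed formula. I would also record, as in the last line of Theorem~\ref{coneAG}, that the corresponding uniqueness statement carries over verbatim.

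The main obstacle is the last step: making the Brianchon-Gram regrouping rigorous at the level of the distributions $A_n$, including correctly handling the cones containing lines that appear in the naive vertex decomposition (these contribute nothing to the Riemann sum by Lemma~\ref{notpt}, but one must check they also contribute nothing to the $A_n$, or else use a polarized decomposition from the outset to avoid them). A secondary subtlety is verifying the identification $\mf{t}(C_v, F) = \mf{t}(P,f)$ and $D_n^X(C_v;F) = D_n^X(P;f)$ cleanly, together with the measure-normalization conventions on the affine hulls $\langle f\rangle$ matching those fixed in Subsection~\ref{NOT1}; these are bookkeeping but need care because the operators live on quotients $X/L(f)$ and must be lifted consistently.
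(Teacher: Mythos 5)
Your overall architecture (decompose into tangent cones, apply Theorem \ref{coneAG} to each, identify the operators via the transverse cones, regroup by faces) is the paper's, but there is a genuine gap in the decomposition step. The identity $\chi(P)=\sum_{v}\chi(C_v)$ holds only modulo characteristic functions of cones containing lines, and your two proposed ways of dealing with those extra cones both fail. The claim that ``their Riemann sums are $O(N^{-\infty})$ by Lemma \ref{notpt}'' is false: Lemma \ref{notpt} does not kill $R_N(C;\varphi)$ for a cone $C$ with lineality space $L=C\cap(-C)$, it reduces it to $R_N(\pi_L(C);(\pi_L)_*\varphi)$, a Riemann sum over a pointed cone in $X/L$ which contributes at \emph{every} order. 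These contributions are not optional: for a positive-dimensional face $f$, summing only the vertex cones gives $\sum_{v\in\vcal(f)}\int_{C_f^+(v)}D_n^X(P;f)\varphi$, a sum of integrals over unbounded overlapping cones in $\ispa{f}$, and it is precisely the (signed) contributions of the line-containing tangent cones $C_P^+(g)$, $0<\dim(g)\le\dim(f)$, that cancel the overcount and collapse this to $\int_f D_n^X(P;f)\varphi$. The alternative of polarizing (Lawrence--Varchenko) to get pointed vertex cones only would destroy the identification of the faces of $C_v$ with the tangent cones $C_f(v)$ of $P$ (some facets get flipped), so $D_n^X(C_v;F)=D_n^X(P;f)$ would no longer hold; you have not worked this out and it is not a small repair.

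The paper instead starts from the exact Euler relation \eqref{Eu}, $\delta((NP)\cap\Lambda)=\sum_{g\in\fcal(P)}(-1)^{\dim(g)}\delta(C_{NP}^{+}(Ng)\cap\Lambda)$, over \emph{all} faces with alternating signs; reduces each non-pointed $C_P(g)$ to the pointed cone $\pi_g(C_P(g))\subset X/L(g)$ by Lemma \ref{notpt}; applies Theorem \ref{coneAG} there; identifies $D_n^X(\pi_g(C_P(g));\pi_g(C_f(g)))=D_n^X(P;f)$ (Lemma \ref{KEY}, which is the clean form of your transverse-cone identification); and then collapses the regrouped sum using the exact characteristic-function identity \eqref{decoC}, $\sum_{g\in\fcal(f)}(-1)^{\dim(g)}\chi^{f}(C_{f}^{+}(g))=\chi^{f}(f)$, applied inside each affine hull $\ispa{f}$. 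Note that this last collapse is an elementary identity of functions integrated against the fixed integrand $D_n^X(P;f)\varphi$; it is not an application of Lemma \ref{VdistA}, which requires an exact relation $\sum_i r_i\chi(C_i)=0$ among \emph{pointed} cones admitting a common separating functional $\eta$ --- hypotheses your family $\{C_f^+(v)\}_{v\in\vcal(f)}$ does not satisfy without reintroducing the line-containing cones you discarded. (Lemma \ref{VdistA} is used earlier, inside the proof of Theorem \ref{coneAG}, to show independence of the unimodular subdivision.) So the fix is to replace your vertex-only decomposition by \eqref{Eu} and carry the positive-dimensional face terms through Lemma \ref{notpt} rather than dropping them.
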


To prove Theorem \ref{AEMTh}, we need the following lemma. 

\begin{lem}
\label{KEY}
Let $f \in \fcal(P)$ and let $n \in \mb{Z}_{+}$ satisfy $\dim(f) \geq \dim(P)-n$. Then, for any $g \in \fcal(P)$ such that $g \subset f$, we have 
\begin{equation}
\label{key10}
D_{n}^{X}(P;f)=D_{n}^{X}(\pi_{g}(C_{P}(g));\pi_{g}(C_{f}(g))). 
\end{equation}
\end{lem}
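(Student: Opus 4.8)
The plan is to show that both sides of \eqref{key10} are the \emph{same} Berline-Vergne operator by exhibiting the transverse cones that define them as the same pointed rational cone, after making the natural identification of ambient quotient spaces. By Definition \ref{opconeG} the symbol of $D_{n}^{X}(P;f)$ is the $Q$-orthogonal pullback to $X^{*}$ of $\mu_{X/L(f)}^{k}(\mf{t}(P,f))$ with $k=n-\dim(P)+\dim(f)$; since $P$ is a lattice polytope and $\mu$ is invariant under translations by lattice vectors (Theorem 21 of \cite{BeV}), one may replace $\mf{t}(P,f)$ by $\pi_{f}(C_{P}(f))$, i.e. $D_{n}^{X}(P;f)=D_{n}^{X}(\pi_{f}(C_{P}(f));0)$, as already recorded after Definition \ref{opconeG}. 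On the other side, $\pi_{g}(C_{P}(g))$ is a pointed rational cone in $X/L(g)$ and $\pi_{g}(C_{f}(g))$ is a face of it, so its transverse cone is just $\pi_{L(\pi_{g}(C_{f}(g)))}(\pi_{g}(C_{P}(g)))$, and the symbol of the right-hand operator (lifted to $X^{*}$) is the corresponding pullback of $\mu^{k'}$ of this cone. Using $L(C_{P}(g))=L(P)$ and $L(C_{f}(g))=L(f)$ one gets $k'=n-\dim(P)+\dim(f)=k$, so both operators have the same order and it remains only to match the cones (together with the ambient lattices and inner products).

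First I would set up the canonical identification. Since $L(C_{f}(g))=L(f)\supseteq L(g)$, we have $L(\pi_{g}(C_{f}(g)))=\pi_{g}(L(f))=L(f)/L(g)$, and there is a canonical isomorphism
\[
\psi\colon\bigl(X/L(g)\bigr)\big/\bigl(L(f)/L(g)\bigr)\xrightarrow{\ \sim\ }X/L(f)
\]
with $\psi\circ\pi_{L(f)/L(g)}\circ\pi_{g}=\pi_{f}$. I would then check that $\psi$ carries the lattice $\pi_{L(f)/L(g)}(\pi_{g}(\Lambda))$ onto $\pi_{f}(\Lambda)$ and is an isometry once each quotient is identified with the $Q$-orthogonal complement of the subspace it kills: this holds because $L(g)\subseteq L(f)$ forces the annihilators to be nested, $L(f)^{\perp_{Q}}\subseteq L(g)^{\perp_{Q}}$, and the composite of the orthogonal projections onto these nested subspaces is the orthogonal projection onto the smaller, i.e. $p_{L(f)/L(g)}\circ p_{L(g)}=p_{L(f)}$ on $X^{*}$. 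Consequently $\mu$ is transported faithfully, $\psi$ identifies $\pi_{L(\pi_{g}(C_{f}(g)))}(\pi_{g}(C_{P}(g)))$ with $\pi_{f}(C_{P}(g))$, and the two ways of lifting the symbol to $X^{*}$ agree; hence the right-hand side of \eqref{key10} equals $D_{n}^{X}(\pi_{f}(C_{P}(g));0)$.

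It then remains to prove $\pi_{f}(C_{P}(g))=\pi_{f}(C_{P}(f))$ as cones in $X/L(f)$. One inclusion is immediate: since $g\subseteq f$, every generator $y-x$ with $y\in P$, $x\in g$ of $C_{P}(g)$ is also a generator of $C_{P}(f)$, so $C_{P}(g)\subseteq C_{P}(f)$ and $\pi_{f}(C_{P}(g))\subseteq\pi_{f}(C_{P}(f))$. For the reverse inclusion I would fix $x_{0}\in g$; for any generator $y-x$ of $C_{P}(f)$ (with $y\in P$, $x\in f$) we have $x-x_{0}\in L(f)=\ker\pi_{f}$, hence $\pi_{f}(y-x)=\pi_{f}(y-x_{0})$, and $y-x_{0}\in C_{P}(g)$; so $\pi_{f}(C_{P}(f))\subseteq\pi_{f}(C_{P}(g))$. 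Combining the three paragraphs, both sides of \eqref{key10} equal $D_{n}^{X}(\pi_{f}(C_{P}(f));0)$.

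The cone manipulations ($L(C_{P}(g))=L(P)$, $L(C_{f}(g))=L(f)$, $C_{P}(g)\subseteq C_{P}(f)$, the projection identities) and the reductions to cones based at the origin are routine, as is the observation that $X/L(f)$ with the $Q$-induced inner product is again a rational inner product space. The step that needs genuine care — and which I expect to be the main, though not deep, obstacle — is the identification in the middle paragraph: one must verify that $(X/L(g))/(L(f)/L(g))\cong X/L(f)$ really respects both the lattice $\pi_{g}(\Lambda)$ and the induced rational inner product, so that the Berline-Vergne function $\mu$ (which depends on these data) is carried correctly, and that pulling a symbol on this quotient back to $X^{*}$ directly agrees with doing it in two stages through $(X/L(g))^{*}$. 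This is precisely the nested-orthogonal-projection observation recorded above.
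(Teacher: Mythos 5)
Your proposal is correct and follows essentially the same route as the paper's proof: reduce both sides to Berline--Vergne operators of cones based at the origin, identify $(X/L(g))/(L(f)/L(g))$ with $X/L(f)$ (the paper does this via $\pi_{G}\comp\pi_{g}=\pi_{f}$), and observe $\pi_{f}(C_{P}(g))=\pi_{f}(C_{P}(f))$ (the paper derives this from $C_{P}(f)=L(f)+C_{P}(g)$, whereas you verify the two inclusions directly). The extra care you take with the lattices, the induced inner products and the nested orthogonal projections is left implicit in the paper but is exactly the right point to check.
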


\begin{proof}
First of all, as in Subsection \ref{BVOP}, note that we have $D_{n}^{X}(P;f)=D_{n}^{X}(\pi_{f}(C_{P}(f));0)$. 
We set $C=\pi_{g}(C_{P}(g))$ and $G=\pi_{g}(C_{f}(g))$. Then, $C$ is a pointed rational cone in $X/L(g)$ and $G \in \fcal(C)$. 
Furthermore, we have $D_{n}^{X}(\pi_{g}(C_{P}(g));\pi_{g}(C_{f}(g)))=D_{n}^{X}(\pi_{G}(C);0)$, 
where $\pi_{G}:X/L(g) \to (X/L(g))/L(G)=X/L(f)$ is the natural projection. 
Since $\pi_{G} \comp \pi_{g}=\pi_{f}:X \to X/L(f)$ and $C_{P}(f)=L(f)+C_{P}(g)$, 
we have $\pi_{G}(C)=\pi_{f}(C_{P}(g))=\pi_{f}(C_{P}(f))$, and hence the equation \eqref{key10} follows. 
\end{proof}

\noindent{{\it Proof of} Theorem \ref{AEMTh}.}\hspace{5pt}
For any $g \in \fcal(P)$ and $v \in g$, we set $C_{P}^{+}(g)=C_{P}(g)+v$ which 
does not depend on the choice of $v \in g$. 
Then, we use the following version of Euler's formula (\cite{BrV}, Proposition 3.2, (1)): 
\begin{equation}
\label{Eu}
\delta((NP) \cap \Lambda)=\sum_{g \in \fcal(P)}(-1)^{\dim(g)}\delta(C_{NP}^{+}(Ng) \cap \Lambda), 
\end{equation}
where, $N$ is a positive integer and, for any subset $S$ of $\Lambda$, $\delta(S)$ is a distribution defined by 
\[
\ispa{\delta(S),\varphi}=\sum_{s \in S}\varphi(s),\quad \varphi \in C_{0}^{\infty}(X). 
\]
For each $N \in \mb{Z}_{>0}$ and $\varphi \in C^{\infty}(X)$, 
we set $(D_{1/N}^{*}\varphi)(x)=\varphi(x/N)$. For each $g \in \fcal(P)$, we fix $v_{g} \in g \cap \Lambda$. 
Clearly we have 
\[
\begin{gathered}
\ispa{\delta(NP \cap \Lambda),D_{1/N}^{*}\varphi}=N^{\dim(P)}R_{N}(P;\varphi), \\
\ispa{\delta(C_{NP}^{+}(Nf) \cap \Lambda),\,D_{1/N}^{*}\varphi}
=N^{\dim(P)}R_{N}(C_{P}(g);T_{v_{g}}\varphi), 
\end{gathered}
\]
where, for $v \in X$, we set $T_{v}\varphi(x)=\varphi(v+x)$. 
Take $\varphi \in C^{\infty}(P)$ and extend $\varphi$ as a compactly supported smooth function on $X$. 
Then, by \eqref{Eu} and Lemma \ref{notpt}, we have 
\begin{equation}
\label{aux51}
\begin{split}
R_{N}(P;\varphi) &=\sum_{g \in \fcal(P)}(-1)^{\dim(g)}R_{N}(C_{P}(g);T_{v_{g}}\varphi) \\
& \sim \sum_{g \in \fcal(P)}(-1)^{\dim(g)}R_{N}(\pi_{g}(C_{P}(g));(\pi_{g})_{*}T_{v_{g}}\varphi). 
\end{split}
\end{equation}
Since $\pi_{g}(C_{P}(g))$ is a pointed rational cone in $X/L(g)$ with respect to the lattice $\pi_{g}(\Lambda)$, 
we can use Theorem \ref{coneAG} for $R_{N}(\pi_{g}(C_{P}(g));(\pi_{g})_{*}T_{v_{g}}\varphi))$ and hence 
\begin{equation}
\label{coeff}
\begin{gathered}
R_{N}(P;\varphi) \sim 
\sum_{n \geq 0}A_{n}(P;\varphi)N^{-n},\\
A_{n}(P;\varphi)=\sum_{g \in \fcal(P)}
\sum_{
\stackrel{G \in \fcal(\pi_{g}(C_{P}(g))),}{\dim(G) \geq \dim(P)-n-\dim(g)}
}
(-1)^{\dim(g)}\int_{G}D_{n}^{X}(\pi_{g}(C_{P}(g));G)(\pi_{g})_{*}T_{v_{g}}\varphi. 
\end{gathered}
\end{equation}
Each faces $G \in \fcal(\pi_{g}(C_{P}(g)))$ with $\dim(G) \geq \dim(P)-n-\dim(g)$ 
can be written as $G=\pi_{g}(C_{f}(g))$ with a face $f \in \fcal(P)$ 
such that $g \subset f$ and $\dim(f) \geq \dim(P)-n$. 
Furthermore, the correspondence 
\[
\{f \in \fcal(P)\,;\,g \subset f\} \ni f \mapsto \pi_{g}(C_{f}(g)) \in \fcal(\pi_{g}(C_{P}(g)))
\]
defines a bijective correspondence between the above two sets. 
Thus, by Lemma \ref{KEY} and the definition of 
the function $(\pi_{g})_{*}T_{v_{f}}\varphi$, we can write 
\[
\begin{split}
A_{n}(P;\varphi)&=
\sum_{g \in \fcal(P)}\sum_{
\stackrel{f \in \fcal(P),\,g \subset f}{\dim(f) \geq \dim(P)-n}
}
(-1)^{\dim(g)}\int_{\pi_{g}(C_{f}(g))}D_{n}^{X}(P;f)(\pi_{g})_{*}T_{v_{g}}\varphi \\
&=\sum_{g \in \fcal(P)}\sum_{
\stackrel{f \in \fcal(P),\,g \subset f}{\dim(f) \geq \dim(P)-n}
}
(-1)^{\dim(g)}\int_{C_{f}^{+}(g)}D_{n}^{X}(P;f)\varphi \\
&=\sum_{f \in \fcal(P),\,\dim(f) \geq \dim(P)-n}\sum_{g \in \fcal(f)}
(-1)^{\dim(g)}\int_{\ispa{f}}\chi^{f}(C_{f}^{+}(g))D_{n}^{X}(P;f)\varphi,  
\end{split}
\]
where $\ispa{f}$ is the affine hull of $f$, and for each $S \subset \ispa{f}$, we denote $\chi^{f}(S)$ the 
characteristic function of $S$ on $\ispa{f}$. 
In the first line above, we used an obvious identity $D_{n}^{X}(P;f)(\pi_{g})_{*}\psi=(\pi_{g})_{*}D_{n}^{X}(P,f)\psi$ 
for $\psi \in C_{0}^{\infty}(X)$. To simplify the above, we use the formula (Proposition 3.1, (1) in \cite{BrV})
\begin{equation}
\label{decoC}
\sum_{g \in \fcal(P)}(-1)^{\dim(g)}\chi(C_{P}^{+}(g))=\chi(P).  
\end{equation}
Note that in \cite{BrV}, the above formula 
is proved for $P$ with non-empty interior. Replacing $P$ by $f \in \fcal(P)$, 
which is regarded as a polytope in the affince subspace $\ispa{f}$ with non-empty relative interior, we have 
\[
\sum_{g \in \fcal(f)}(-1)^{\dim(g)}\chi^{f}(C_{f}^{+}(g))=\chi^{f}(f). 
\]
Therefore, we obtain the formula \eqref{AEMc} for $A_{n}(P;\varphi)$, 
which complete the proof of Theorem \ref{AEMTh}. 
\hfill$\square$

\vspace{10pt}

\begin{rem}
In the proof above, we used Theorem \ref{coneAG}. However, if the lattice polytope $P$ is Delzant, 
then the cone $\pi_{f}(C_{P}(f))$ for each $f \in \fcal(P)$ is a unimodular cone in $X/L(f)$. 
Therefore, we only need to use Theorem \ref{coneA}. Hence, for Delzant lattice polytopes, 
it turns out that our proof of Theorem \ref{AEMTh} is independent of \cite{BeV}. 
However, for general lattice polytopes, it does not seem to be easy to 
construct the operator $D_{n}^{X}(P;f)$ in such a way given in Definition \ref{opcone}. 
Indeed, Definition \ref{opcone} is based on Proposition \ref{uC}. This means that 
if we could obtain a result like Proposition \ref{uC} for general rational cones, then 
one might be able to find such an expression as in Definition \ref{opcone}. Hence, it might be 
better to prove a result like Proposition \ref{uC} for rational cones without using a subdivision of 
rational cones into unimodular cones. However, to do this, it seems that one need to find a different method. 
\end{rem}

Next, we show that, under some assumptions, the asymptotic expansion 
of $R_{N}(P;\varphi)$ of the form $\eqref{AEMc}$ is unique. 

\begin{theorem}
\label{UofP}
Suppose that, for any rational space $(X,\Lambda)$ with a rational inner product, 
rational subspace $L$ of $X$, pointed rational cone $C$ in $X/L$ 
and non-negative integer $n$ such that $n \geq \dim(C)$, there exists a homogeneous differential operator $\dcal_{n}^{X}(C)$ 
of order $n-\dim(C)$ with symbol $\nu_{n}^{X}(C)$ such that they satisfy the conditions $(1)$, $(2)$ and $(3)$ in {\rm Theorem \ref{BerVer}}. 
Furthermore, suppose that, 
for any lattice polytope $P$ in $X$ and $\varphi \in C^{\infty}(P)$, the following holds$:$
\begin{equation}
\label{AAP}
R_{N}(P;\varphi) \sim \sum_{n \geq 0}N^{-n}\sum_{f \in \fcal(P)\,;\,\dim(f) \geq \dim(P) -n}
\int_{f}\dcal_{n}^{X}(\pi_{f}(C_{P}(f)))\varphi.   
\end{equation}
Then, we have $\dcal_{n}^{X}(C)=D_{n}^{X}(C;0)$ for any pointed rational cone $C$ in $X$ and 
non-negative integer $n$ with $n \geq \dim(C)$, 
where the operator $D_{n}^{X}(C;0)$ is defined in {\rm Definition \ref{opconeG}}. 
\end{theorem}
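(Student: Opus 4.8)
The plan is to mimic the inductive argument used in the proof of Theorem \ref{BerVer}, now working with pointed rational cones in place of unimodular cones, and to feed the hypothesis \eqref{AAP} the same family of exponential test functions. First I would reduce to the case $\dim(C)=\dim(X)$: by condition (2) applied to $\dcal_n^X(C)$ and to $D_n^X(C;0)$ (the latter satisfies (2) by Proposition 13 of \cite{BeV}, as recalled at the start of the proof of Theorem \ref{BerVer}), together with the identification $X\cong L(C)\oplus L(C)^{\perp_Q}$, it suffices to prove $\nu_n^X(C)=\mu_X^{n-\dim(C)}(C)$ when $C$ is full-dimensional in $X$. The base case $\dim(X)=0$ (and the easy $1$-dimensional case) is forced by condition (3), exactly as in Theorem \ref{BerVer}.

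For the inductive step, suppose the identity $\nu_n^{X'}(C')=D_n^{X'}(C';0)$ holds for all rational spaces $X'$ of dimension $<m$ and all pointed rational cones $C'$ in a quotient of $X'$; let $\dim(X)=m$ and let $C\subset X$ be a full-dimensional pointed rational cone. The key device is to specialize \eqref{AAP} to a lattice polytope whose transverse cone along some face is $C$. Concretely I would pick a lattice point $v_0$ in the interior region so that, for a suitable lattice polytope $P$, one of its vertices $p$ has $\pi_p(C_P(p))=C$, and all other faces $f$ of $P$ have $\dim(f)>0$ or are lower-dimensional cones already covered by the induction hypothesis; alternatively, and more cleanly, I would avoid polytopes entirely and instead invoke the already-proved Theorem \ref{coneAG} as the analogue of assumption (4): since Theorem \ref{coneAG} gives $R_N(C;\varphi)\sim\sum_n N^{-n}A_n(C;\varphi)$ with $A_n$ built from the Berline-Vergne operators, and since \eqref{AAP} is assumed to produce an expansion of the same Riemann sums in the same shape (after the reduction via Euler's formula \eqref{Eu}, Lemma \ref{notpt} and the cancellation in Lemma \ref{VdistA}), one compares the two. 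The comparison is made rigid by testing against $\varphi$ agreeing with $e_\xi(x)=e^{\langle\xi,x\rangle}$ on $C$ for $\xi\in X^*$ with $\langle\xi,x\rangle<0$ on $C$, which converts the asymptotic expansions into genuine identities of meromorphic functions: $S(C)(\xi/N)=N^mR_N(C;e_\xi)$ admits the finite expansion coming from \eqref{ind1} with coefficients $\mu_X^{n-m+\dim(F)}(\pi_F(C))(\xi)I(F)(\xi)$, while \eqref{AAP} (or Theorem \ref{coneAG}) yields the corresponding expansion with $\nu$'s; equating coefficients of $N^{m-n}$ for $n\ge m$ and using the induction hypothesis to identify all terms indexed by faces $F$ with $\dim(F)>0$ leaves $\nu_n^X(C)(\xi)=\mu_X^{n-m}(C)(\xi)$ on the cone of such $\xi$, hence everywhere by density/polynomiality. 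Then $\nu_n^X(C)=\mu_X^{n-m}(C)$ means precisely $\dcal_n^X(C)=D_n^X(C;0)$ by Definition \ref{opconeG}.

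The step I expect to be the main obstacle is making precise the reduction that lets me apply \eqref{AAP} — stated only for lattice polytopes — to extract information about a single pointed rational cone $C$. Unlike Theorem \ref{BerVer}, where assumption (4) is itself stated for cones, here the hypothesis is polytopal, so I must either (i) realize $C$ as the transverse cone of a lattice polytope along a face and disentangle the contributions of the other faces by the valuation/cancellation machinery (Lemma \ref{VdistA}, \eqref{decoC}, \eqref{Eu}), carefully tracking that the lower-dimensional pieces are handled by induction and that the top piece isolates $\nu_n^X(C)$; or (ii) observe that the proof of Theorem \ref{AEMTh} already shows any family $\dcal_n^X(C)$ satisfying (1)--(3) and \eqref{AAP} must, after running that proof verbatim, produce $A_n(P;\varphi)=\sum_f\int_f\dcal_n^X(\pi_f(C_P(f)))\varphi$, and then uniqueness of the coefficients of an asymptotic expansion forces agreement with the Berline-Vergne version termwise over all lattice polytopes, from which one recovers the cone-level identity by choosing polytopes whose transverse cones exhaust all pointed rational cones. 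I would carry out route (ii): it is the shortest, since it reuses the computation in the proof of Theorem \ref{AEMTh} as a black box and reduces the whole matter to the cone-level uniqueness already established in the last sentence of Theorem \ref{coneAG}. The routine part — checking that the exponential test functions lie in $\scal(X)$ after extension and that the termwise comparison is legitimate — is exactly as in Theorem \ref{BerVer} and needs no repetition.
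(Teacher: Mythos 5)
There is a genuine gap, and it sits exactly where you predicted the main obstacle would be. Your route (ii) does not close it. The ``cone-level uniqueness'' in the last sentence of Theorem \ref{coneAG} is the uniqueness statement of Theorem \ref{BerVer} with unimodular cones replaced by pointed rational cones, and its hypothesis is an asymptotic expansion of $R_{N}(C;\varphi)$ \emph{over cones} (condition (4) of Theorem \ref{BerVer}). The hypothesis \eqref{AAP} you are given is purely polytopal, and nothing in your argument derives a cone-level expansion for the $\dcal$-family from it; the reductions via \eqref{Eu}, Lemma \ref{notpt} and Lemma \ref{VdistA} run in the opposite direction (from polytopes to cones for the \emph{known} Berline-Vergne operators). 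Likewise, comparing \eqref{AAP} with Theorem \ref{AEMTh} only yields, for every lattice polytope $P$ and every $n$, the identity $\sum_{f}\int_{f}\dcal_{n}^{X}(\pi_{f}(C_{P}(f)))\varphi=\sum_{f}\int_{f}D_{n}^{X}(P;f)\varphi$ of \emph{sums over all faces}; ``choosing polytopes whose transverse cones exhaust all pointed rational cones'' does not by itself separate the contribution of a single face, which is precisely the content of the theorem. A second, smaller error: in your route (i) you say the other faces are ``lower-dimensional cones already covered by the induction hypothesis,'' but the transverse cone at \emph{any other vertex} of a full-dimensional $P$ is again $m$-dimensional, so the induction on $\dim X$ does not cover it.

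The paper's actual proof supplies the missing separation by a localization of the test function rather than by valuation machinery. Inducting on $\dim X$ and assuming $\dim(C)=m$, one truncates $C$ by a rational hyperplane to produce a lattice polytope $P=qP_{1}$ with $0$ as a vertex and $\pi_{0}(C_{P}(0))=C$, then takes $\varphi\in C_{0}^{\infty}(U)$ for a small ball $U$ around $0$ containing no other vertex. In the coefficient identity, faces $f$ with $\dim(f)>0$ have transverse cones in quotients of dimension $<m$ and are identified by the induction hypothesis; the other vertices contribute nothing because $\varphi$ vanishes near them; what remains is $[\dcal_{n}^{X}(C)\varphi](0)=[D_{n}^{X}(P;0)\varphi](0)$ for $n\ge m$. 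One then upgrades this pointwise evaluation to an equality of operators by inserting a cutoff $\rho\equiv 1$ near $0$ and translating by $T_{x}$. Your proposal contains most of the surrounding scaffolding (the induction, the correct identification of the obstacle, the comparison of the two expansions), but it lacks both the truncated-cone polytope with the support-localized test function and the final cutoff-and-translate step, and the shortcut you elect to take in their place is circular.
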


\begin{proof}
Let us prove the assertion by the induction on $\dim(X)$. For $\dim(X)=0,1$, the assertion is true by the condition (3) in Theorem \ref{BerVer}. 
Suppose that for each $(X,\Lambda)$ with $\dim(X) \leq m-1$, the assertion holds. 
Let $\dim(X)=m$. Take a pointed rational cone $C$ in $X$. We may assume that $\dim(C)=m$. 
Take a vector $\xi \in \Lambda^{*}$ such that $\ispa{\xi,x}>0$ for any $x \in C$. 
Set $P_{1}=C \cap \{x \,;\, \ispa{\xi,x} \leq 1\}$, which is a rational polytope in $X$. 
Hence, each vertex of $P_{1}$ is a rational point in $X$. 
We take a positive integer $q$ such that $P=qP_{1}$ is a lattice polytope. 
Let $U$ be a small open ball around the origin such that $U \cap \vcal(P)=\{0\}$ and $U \subset \{x\,;\,\ispa{\xi,x}<q\}$. 
Then, by the assumption, for each $\varphi \in C_{0}^{\infty}(U)$, 
the Riemann sum $R_{N}(P;\varphi)$ admits the asymptotic expansion \eqref{AAP}. 
In \eqref{AAP}, if $\dim(f)>0$, then since $\dim(\pi_{f}(C_{P}(f)))=m-\dim(f)<m$, the differential operators $\dcal_{n}^{X}(\pi_{f}(C_{P}(f)))$ 
coincide with $D_{n}^{X}(P,f)=D_{n}^{X}(\pi_{f}(C_{P}(f)),0)$ by the induction hypothesis. 
Take a vertex $v$ of $P$. Suppose $v \neq 0$. Since $\varphi$ is zero near $v$, the contribution from the vertex $v$ to the 
expansion \eqref{AAP} vanishes. Thus, by Theorem \ref{AEMTh}, we have 
\begin{equation}
\label{unop}
[\dcal_{n}^{X}(\pi_{0}(C_{P}(0)))\varphi] (0)=[D_{n}^{X}(P;0)\varphi](0)\quad (n \geq m). 
\end{equation}
Take $\rho \in C_{0}^{\infty}(U)$ such that $\rho=1$ near $0$. For any $\psi \in C^{\infty}(X)$, 
we have \eqref{unop} for $\varphi=\rho \psi$. But since $\rho=1$ near $0$, the equation \eqref{unop} holds for any $\varphi \in C^{\infty}(X)$. 
Take $\varphi \in C^{\infty}(X)$ and $x \in X$. Applying \eqref{unop} for the function $T_{x}\varphi$, we have 
\[
[\dcal_{n}^{X}(\pi_{0}(C_{P}(0)))\varphi] (x)=[D_{n}^{X}(P;0)\varphi](x)\quad (n \geq m) 
\]
for any $\varphi \in C^{\infty}(X)$. Since $\pi_{0}(C_{P}(0))=C$, we conclude the assertion. 
\end{proof}

\subsection{Computation in one and two dimensions}
\label{Comp2D}

A polytope $P$ in a rational space $(X,\Lambda)$ is said to be Delzant if for each vertex $v$ of $P$, the number 
of edges incident to $v$ is $\dim(X)$ and there exists an integral basis $E$ of $\Lambda$ such that 
each edge incident to $v$ is of the form $\{v +t e\,;\,t \geq 0\}$ with an $e \in E$. 
In this and the next subsection, we give explicit computations for Delzant lattice polytopes. 
To compute each coefficient $A_{n}(P;\varphi)$ in the asymptotic expansion of the Riemann sum $R_{N}(P;\varphi)$, 
it is important to compute in low dimensions. In this subsection, we perform these computation. 
In this and the next subsections, we drop the superscript $X$ in $D_{n}^{X}(P,g)$.

\subsubsection{In one dimension}
Let $X$ be a $1$-dimensional vector space 
with the lattice $\Lambda$. Let $u \in \Lambda$ be a generator and set $C=\mb{R}_{+}u$. 
We have computed the differential operator $D_{n}(C;0)$ in {\it Example} at the end of Subsection \ref{DOC}. 
Let $P$ be an interval given by $P=\{tu \in X\,;\,a \leq t \leq b\}$ with $a,b \in \mb{Z}$, $a<b$.  
Since $D_{n}(P;P)=0$ for $n \geq 1$, we have 
\[
A_{n}(P;\varphi)=D_{n}(P;\{a\})\varphi (a) +D_{n}(P;\{b\})\varphi (b)
=(-1)^{n-1}p(n)[\nabla_{u}^{n-1}\varphi (a) +\nabla_{-u}^{n-1}\varphi (b)]
\]
Identifying $X=\mb{R}$ and $u=1$ so that $\Lambda =\mb{Z}$, we have 
\[
A_{n}(P;\varphi)=-\frac{b_{n}}{n!}[\varphi^{(n-1)}(a)-(-1)^{n}\varphi^{(n-1)}(b)].   
\]
Substituting $b_{2m+1}=0$ $(m \geq 1)$ and $b_{2m}=(-1)^{m-1}B_{m}$ with the Bernoulli number $B_{m}$, 
we have 
\[
A_{2m+1}(P;\varphi)=0,\quad 
A_{2m}(P;\varphi)=(-1)^{m-1}\frac{B_{m}}{(2m)!}[\varphi^{(2m-1)}(b)-\varphi^{(2m-1)}(a)], 
\]
which shows the classical asymptotic Euler-Maclaurin formula. 

\subsubsection{In two dimension}
Next, we compute in two dimension. Let $(X,\Lambda)$ be a two dimensional rational vector space 
with a rational inner product $Q$. 
Let $E=\{e_{1},e_{2}\}$ be an integral basis of the lattice $\Lambda$, and set $C=\mb{R}_{+}e_{1}+\mb{R}_{+}e_{2}$.
Set
\[
e_{1}=u_{1}+c_{1}e_{2},\quad e_{2}=u_{2}+c_{2}e_{1}, 
\]
where the non-zero vectors $u_{1},u_{2} \in X$ satisfy $Q(u_{1},e_{2})=Q(u_{2},e_{1})=0$, and $c_{1}, c_{2} \in \mb{Q}$ are 
given by 
\begin{equation}
\label{ccc}
c_{1}=\frac{Q(e_{1},e_{2})}{Q(e_{2},e_{2})},\quad c_{2}=\frac{Q(e_{1},e_{2})}{Q(e_{1},e_{1})}. 
\end{equation}
Define $\lambda_{1},\lambda_{2} \in \mb{Z}^{E}$ by $\lambda_{i}(e_{j})=\delta_{i,j}$. 
A straightforward computation shows 
\[
\begin{gathered}
L(C;E,E;k\lambda_{1}+l\lambda_{2})=\nabla_{e_{1}}^{k}\nabla_{e_{2}}^{l},\\
L(C;\{e_{1}\},\{e_{1}\};k\lambda_{1})=\nabla_{u_{1}}^{k},\quad
L(C;\{e_{2}\},\{e_{2}\};l\lambda_{2})=\nabla_{u_{2}}^{l},\\
L(C;\{e_{1}\},E;k\lambda_{1})=c_{1}\sum_{s=0}^{k-1}\nabla_{u_{1}}^{s}\nabla_{e_{1}}^{k-1-s},\quad
L(C;\{e_{2}\},E;l\lambda_{2})=c_{2}\sum_{s=0}^{l-1}\nabla_{u_{2}}^{s}\nabla_{e_{2}}^{l-1-s}. 
\end{gathered}
\]
Set $F_{1}=\mb{R}_{+}e_{2}$, $F_{2}=\mb{R}_{+}e_{1}$. Then, we have 
\[
\begin{split}
D_{n}(C;F_{1})=&(-1)^{n-1}p(n)\nabla_{u_{1}}^{n-1},\quad 
D_{n}(C;F_{2})=(-1)^{n-1}p(n)\nabla_{u_{2}}^{n-1}\ \ (n \geq 1), \\
D_{n}(C;0)=&(-1)^{n}\sum_{k=1}^{n-1}p(k)p(n-k)\nabla_{e_{1}}^{k-1}\nabla_{e_{2}}^{n-1-k}\\
&+(-1)^{n}p(n)\left(
c_{1}\sum_{s=0}^{n-2}\nabla_{u_{1}}^{s}\nabla_{e_{1}}^{n-2-s}+c_{2}\sum_{s=0}^{n-2}\nabla_{u_{2}}^{s}\nabla_{e_{2}}^{n-2-s} 
\right) 
\quad (n \geq 2).
\end{split}
\]
Let $P$ be a Delzant lattice polytope in $(X,\Lambda)$. 
For each facet $f$ of $P$, $D_{n}(P;f)$ is the lift of $D_{n}(\pi_{f}(C_{P}(f));0)$. 
Let $\alpha_{f} \in \Lambda$ be the inward primitive normal of $f$. 
(Such a vector $\alpha_{f}$ exists because the dual basis of an integral basis of $\Lambda$ with 
respect to $Q$ is rational.) 
We identify $\pi_{f}(C_{P}(f))$ with $\mb{R}_{+}\alpha_{f}$ by the map
\[
\varphi_{f}:X/L(f) \ni x+L(f) \mapsto \frac{Q(x,\alpha_{f})}{Q(\alpha_{f},\alpha_{f})}\alpha_{f} \in \mb{R}\alpha_{f}. 
\]
Let $e_{1} \in \Lambda$ be a generator of $L(f) \cap \Lambda$. 
Since $P$ is Delzant, we can find $e_{2} \in C_{P}(f) \cap \Lambda$ such that $\{e_{1},e_{2}\}$ forms an integral basis of $\Lambda$. 
Then, the vector 
\begin{equation}
\label{genPP}
u_{f}:=\frac{Q(e_{2},\alpha_{f})}{Q(\alpha_{f},\alpha_{f})}\alpha_{f}
\end{equation}
is a generator of $\varphi_{f}(\pi_{f}(\Lambda))$ such that $\varphi_{f}(\pi_{f}(C_{P}(f)))=\mb{R}_{+}u_{f}$. 
Note that the definition of $u_{f}$ does not depend on the choice of $e_{2} \in C_{P}(f) \cap \Lambda$ 
whenever $e_{1},e_{2}$ forms an integral basis of $\Lambda$. 
Hence, by \eqref{1DconeN}, the differential operator $D_{n}(P;f)$ is given by 
\[
D_{n}(P;f)=(-1)^{n-1}p(n)\nabla_{u_{f}}^{n-1}=-\frac{b_{n}}{n!}\nabla_{u_{f}}^{n-1} \ \ (n \geq 1). 
\]
Therefore, we have the following. 
\begin{cor}
\label{2Dasympt}
Let $(X,\Lambda)$ be a two dimensional rational vector space with a rational inner product $Q$. 
Let $P$ be a Delzant lattice polytope in $(X,\Lambda)$. 
Then, the coefficients $A_{n}(P;\varphi)$ $(n \geq 2)$ in the asymptotic expansion $\eqref{AEMc}$ of the 
Riemann sum $R_{N}(P;\varphi)$ is given by 
\[
A_{n}(P;\varphi)=\sum_{f \in \fcal(P)_{1}}\int_{f}D_{n}(P;f)\varphi+
\sum_{v \in \vcal(P)}D_{n}(P;v)\varphi (v). 
\]
In the above, the differential operators $D_{n}(P;f)$ and $D_{n}(P;v)$ are given by 
\[
\begin{split}
D_{n}(P;f)&=-\frac{b_{n}}{n!}\nabla_{u_{f}}^{n-1},\\
D_{n}(P;v)&=\sum_{k=1}^{n-1}\frac{b_{k}b_{n-k}}{k!(n-k)!}\nabla_{e_{1}(v)}^{k-1}\nabla_{e_{2}(v)}^{n-1-k}\\
&+\frac{b_{n}}{n!}\left(
c_{1}(v)\sum_{s=0}^{n-2}\nabla_{u_{1}(v)}^{s}\nabla_{e_{1}(v)}^{n-2-s}+c_{2}(v)\sum_{s=0}^{n-2}\nabla_{u_{2}(v)}^{s}\nabla_{e_{2}(v)}^{n-2-s} 
\right), 
\end{split}
\]
where, for a face $f \in \fcal(P)_{1}$, $u_{f} \in X_{\mb{Q}}$ denotes the inward normal defined in \eqref{genPP}, 
and for a vertex $v \in \vcal(P)$, the vectors $e_{1}(v), e_{2}(v) \in \Lambda$ denote the integral basis of $\Lambda$ such that two facets meeting at $v$ 
lie on the half lines $v+te_{i}(v)$, $t \geq 0$, $i=1,2$, and $u_{1}(v),u_{2}(v) \in X$ satisfy
\[
\begin{gathered}
e_{1}(v)=u_{1}(v)+c_{1}(v)e_{2},\ \ Q(u_{1}(v),e_{2}(v))=0,\ \ c_{1}(v)=\frac{Q(e_{1}(v),e_{2}(v))}{Q(e_{2}(v),e_{2}(v))}, \\
e_{2}(v)=u_{2}(v)+c_{2}(v)e_{1},\ \ Q(u_{2}(v),e_{1}(v))=0,\ \ c_{2}(v)=\frac{Q(e_{1}(v),e_{2}(v))}{Q(e_{1}(v),e_{1}(v))}. 
\end{gathered}
\]
\end{cor}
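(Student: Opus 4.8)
The plan is to specialize the general machinery developed in Sections \ref{ASCONE}--\ref{RPC} to the case $\dim(X)=2$ and to a Delzant lattice polytope $P$. The starting point is Theorem \ref{AEMTh}, which already gives the asymptotic expansion $R_N(P;\varphi)\sim\sum_n A_n(P;\varphi)N^{-n}$ with $A_n(P;\varphi)=\sum_{f\in\fcal(P),\,\dim(f)\ge 2-n}\int_f D_n(P;f)\varphi$. For $n\ge 2$ the constraint $\dim(f)\ge 2-n$ is vacuous, so the sum runs over all faces: the $2$-dimensional face $P$ itself (contributing $D_n(P;P)=0$ for $n\ge 1$), the facets $f\in\fcal(P)_1$, and the vertices $v\in\vcal(P)$. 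Thus the structural form of $A_n(P;\varphi)$ claimed in the corollary is immediate; what remains is to compute $D_n(P;f)$ for facets and $D_n(P;v)$ for vertices explicitly.

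For a facet $f$, I would invoke Lemma \ref{TransC1} (together with the identity $D_n(P;f)=D_n^X(\pi_f(C_P(f));0)$ recorded after Definition \ref{opconeG}): $D_n(P;f)$ is the lift of the one-dimensional operator $\dcal_n^{X/L(f)}(\pi_f(C_P(f));0)$. Because $P$ is Delzant, $\pi_f(C_P(f))$ is a unimodular cone in the one-dimensional space $X/L(f)$, generated by $\pi_f(e_2)$ where $\{e_1,e_2\}$ is the integral basis of $\Lambda$ adapted to $f$ (with $e_1$ spanning $L(f)\cap\Lambda$). Identifying $X/L(f)$ with $L(f)^{\perp_Q}$ via $\varphi_f$, the generator becomes $u_f$ as in \eqref{genPP}, and the one-dimensional computation \eqref{1DconeN} gives $D_n(P;f)=(-1)^{n-1}p(n)\nabla_{u_f}^{n-1}=-\frac{b_n}{n!}\nabla_{u_f}^{n-1}$. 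I would also check the claimed independence of $u_f$ on the auxiliary choice of $e_2$: any two valid choices differ by an integer multiple of $e_1$, which lies in $L(f)$ and hence is killed by the projection/identification.

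For a vertex $v$, the operator $D_n(P;v)=D_n^X(\pi_v(C_P(v));0)=D_n^X(C_P(v);0)$, and since $P$ is Delzant, $C_P(v)$ is (a translate of) the unimodular cone $C=\mb{R}_+e_1(v)+\mb{R}_+e_2(v)$; translation-invariance of the Berline--Vergne operators (Theorem 21 in \cite{BeV}, as noted after Definition \ref{opconeG}) lets me replace $C_P(v)$ by $C$. Then I would quote Theorem \ref{BVvsT} to replace $D_n^X(C;0)$ by $\dcal_n^X(C;0)$, and apply the explicit formula \eqref{ConeD} of Definition \ref{opcone} with $I_F=E=\{e_1(v),e_2(v)\}$: this reduces the computation to the values of $p_I(\nu)$ and the integration-by-parts operators $L(E;I,E;\nu-e(I))$ for $I\subset E$. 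The latter are exactly the operators listed just before Corollary \ref{2Dasympt} (namely $L(C;E,E;k\lambda_1+l\lambda_2)=\nabla_{e_1}^k\nabla_{e_2}^l$, $L(C;\{e_i\},E;k\lambda_i)=c_i\sum_{s=0}^{k-1}\nabla_{u_i}^s\nabla_{e_i}^{k-1-s}$), which follow from the inductive definition \eqref{opdef2} via the orthogonal decompositions $e_i=u_i+c_ie_{3-i}$ with $c_i$ given by \eqref{ccc}. Plugging these in, using $p_I(\nu)=\prod p(\nu(\cdot))$ and $p(k)=(-1)^{k-1}c_k=(-1)^kb_k/k!$ from \eqref{EMn2}, \eqref{Catalan}, and collecting the three types of terms ($I=E$, $I=\{e_1\}$, $I=\{e_2\}$, while $I=\emptyset$ contributes nothing to $F=0$ for $n\ge 1$) yields the stated formula for $D_n(P;v)$ after routine sign bookkeeping.

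The main obstacle I anticipate is purely the bookkeeping in the vertex computation: correctly matching the index ranges in \eqref{ConeD} (the conditions $\nu\in\mb{Z}_{>0}^I$, $|\nu|=n$, and the shift $\nu-e(I)$) against the explicit $L$-operators, and tracking the overall sign $(-1)^{n-\dim(C)+\dim(F)}=(-1)^n$ together with the signs hidden in $p(k)=(-1)^kb_k/k!$, so that the final answer comes out with the $+\frac{b_kb_{n-k}}{k!(n-k)!}$ and $+\frac{b_n}{n!}c_i(v)$ coefficients as written. A secondary point requiring care is that each facet $f$ meeting $v$ must use the same adapted integral basis, so that the $u_f$ appearing in the facet term and the $u_i(v)$ appearing in the vertex term are compatibly defined; this is exactly where the Delzant hypothesis is used twice, and I would make the identification $\varphi_f$ explicit to confirm $u_f=u_i(v)$ for the appropriate $i$. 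Neither difficulty is conceptual; both are finite, deterministic calculations given the results already established.
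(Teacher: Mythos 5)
Your proposal is correct and follows essentially the same route as the paper: specialize Theorem \ref{AEMTh}, note $D_n(P;P)=0$, compute the facet operators from the one-dimensional case \eqref{1DconeN} via the identification $\varphi_f$ and \eqref{genPP}, and compute the vertex operators by evaluating \eqref{ConeD} with the explicit two-dimensional integration-by-parts operators $L(E;I,E;\cdot)$ obtained from \eqref{opdef2} and the decompositions $e_i=u_i+c_ie_{3-i}$. The sign bookkeeping you flag works out exactly as you anticipate ($(-1)^n$ from \eqref{ConeD} combined with $p(k)=(-1)^kb_k/k!$ yields the stated positive coefficients), and your extra checks (independence of $u_f$ from the choice of $e_2$, the vacuity of the $I=\emptyset$ contribution) are correct though the paper leaves them implicit.
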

Note that, in the following, we use $D_{2}(C;0)$ for two dimensional unimodular cone $C$. 
The explicit formula for $D_{2}(C;0)$ is given by 
\begin{equation}
\label{2Dcone2}
D_{2}(C;0)=p(1)^{2}+(c_{1}+c_{2})p(2)=\frac{1}{4}+(c_{1}+c_{2})\frac{1}{12},  
\end{equation}
where $c_{1},c_{2}$ are given in \eqref{ccc}. 

\subsection{Computation of the coefficient in the third term}
\label{Comp3}

Our main Theorem \ref{AEMTh}, or rather the construction of the operators $D_{n}(P;f)$, 
allows us to compute the coefficient $A_{2}(P;\varphi)$ in the third term of the asymptotic expansion 
\eqref{AEMc}. Before computing the third term, let us compute the first and second terms. 

\begin{cor}
\label{A0A1}
For any Delzant lattice polytope $P$ in a rational space $(X,\Lambda)$ 
with a rational inner product $Q$, we have 
\[
A_{0}(P;\varphi)=\int_{P}\varphi\,dx,\quad A_{1}(P;\varphi)=\frac{1}{2}\sum_{g \in \fcal(P)_{m-1}}\int_{g}\varphi, 
\]
where the integration on facets $g \in \fcal(P)_{m-1}$ is performed with respect to the measure on $g$ 
induced by the lattice $\Lambda$. 
\end{cor}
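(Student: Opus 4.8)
The plan is to specialize Theorem~\ref{AEMTh} to $n=0$ and $n=1$. In both cases only a handful of faces $f$ of $P$ satisfy $\dim(f)\geq\dim(P)-n$, and the Berline--Vergne operators attached to them can be read off from the one-dimensional computation in the \emph{Example} at the end of Subsection~\ref{DOC}, using the remark in Subsection~\ref{BVOP} that $D_n^X(P;f)=D_n^X(\pi_f(C_P(f));0)$ and the identification $\dcal_n^X=D_n^X$ of Theorem~\ref{BVvsT}.

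First I would treat $A_0$. The sum in \eqref{AEMc} defining $A_0(P;\varphi)$ reduces to the single term $f=P$; since $C_P(P)=L(P)$ we have $\pi_P(C_P(P))=\{0\}$ in $X/L(P)$, so by the remark in Subsection~\ref{BVOP} and the convention $\dcal_0^X(C;C)=1$ of Definition~\ref{opcone} (together with Theorem~\ref{BVvsT}), $D_0^X(P;P)=1$. Hence $A_0(P;\varphi)=\int_P\varphi\,dx$. This step uses nothing about $P$ beyond its being a lattice polytope.

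Next I would treat $A_1$. The faces contributing to $A_1(P;\varphi)$ are $f=P$ and the facets $g\in\fcal(P)_{\dim(P)-1}$. For $f=P$ one has $D_1^X(P;P)=\dcal_1^{X/L(P)}(\{0\};0)=0$, so $P$ drops out. For a facet $g$, the transverse cone $\pi_g(C_P(g))$ spans the one-dimensional quotient $X/L(g)\cong L(g)^{\perp_{Q}}$, and since $P$ is Delzant this cone is unimodular, say $\mb{R}_{+}u_g$. Combining $D_1^X(P;g)=D_1^X(\pi_g(C_P(g));0)$ with the one-dimensional formula $\dcal_n^{X}(\mb{R}_{+}u;0)=-\tf{b_n}{n!}\nabla_u^{\,n-1}$ from \eqref{1DconeN}, the operator $D_1^X(P;g)$ is the lift to $X$ of the order-zero operator $-b_1$; since ${\rm Todd}(-z)=1-\tf12 z+\cdots$ we have $b_1=-\tf12$, and lifting a constant through $X/L(g)\cong L(g)^{\perp_{Q}}$ returns the same constant, so $D_1^X(P;g)=\tf12$. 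Hence $A_1(P;\varphi)=\tf12\sum_{g\in\fcal(P)_{\dim(P)-1}}\int_g\varphi$, the integral over a facet $g$ being taken with respect to the measure on $\ispa{g}$ normalized by $L(g)\cap\Lambda$ (Subsection~\ref{NOT1}), which is precisely the measure on $g$ induced by $\Lambda$.

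I do not expect a genuine obstacle here; the only points requiring a word of care are the normalization of the measure on a facet and the observation that the identification $X/L(g)\cong L(g)^{\perp_{Q}}$ leaves a zeroth-order operator unchanged, both routine. One could avoid invoking Theorem~\ref{BVvsT} altogether by computing directly from Theorem~\ref{coneA} and Definition~\ref{opcone}, as in Subsection~\ref{Comp2D}, but the route above is the shortest.
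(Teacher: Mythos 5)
Your proposal is correct and follows essentially the same route as the paper: both specialize Theorem \ref{AEMTh}, kill the top-dimensional contribution via $D_{1}(P;P)=0$, and evaluate $D_{1}(P;g)$ for a facet $g$ as the constant $-b_{1}=\tfrac12$ by reducing to the one-dimensional formula \eqref{1DconeN} in the quotient $X/L(g)\cong L(g)^{\perp_{Q}}$. The only cosmetic difference is that the paper spells out the generator $u_{g}$ of the transverse cone via an integral basis adapted to $g$, which is immaterial since the operator is of order zero.
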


\begin{proof}
The first term is obvious. For the second term $A_{1}(P;\varphi)$, note that the dimension of faces which contribute to $A_{1}(P;\varphi)$ 
is $m-1$ and $m$. But the operator $D_{1}(P;P)$ is the lift of $D_{1}(0;0)$ (see Definition \ref{opcone}) which is zero. 
Thus, the contribution to $A_{1}(P;\varphi)$ comes from facets. Let $g \in \fcal(P)_{m-1}$. Then the operator $D_{1}(P;g)$ is the lift 
of $D_{1}(\pi_{g}(C_{P}(g));0)$, which is a rational constant. 
Let $\alpha_{g} \in \Lambda$ be inward primitive normal of the facet $g$. 
As in the computation in two dimension, let $\varphi_{g}:X/L(g) \to L(g)^{\perp_{Q}}$ be the isomorphism defined by 
\[
\varphi_{g}(x+L(g))=\frac{Q(x,\alpha_{g})}{Q(\alpha_{g},\alpha_{g})}\alpha_{g}. 
\]
We take an integral basis $e_{1},\ldots,e_{m-1}$ of $L(g) \cap \Lambda$. 
Since $P$ is Delzant, one can take $e_{m} \in C_{P}(g)$ such that $e_{1},\ldots,e_{m}$ form an integral basis of $\Lambda$. 
We set 
\begin{equation}
\label{genPPP}
u_{g}=\frac{Q(e_{m},\alpha_{g})}{Q(\alpha_{g},\alpha_{g})}\alpha_{g} \in L(g)^{\perp_{Q}}. 
\end{equation}
As before, the definition of $u_{g}$ above does not depend on the choice of $e_{m}$ above. 
By \eqref{1DconeN}, we have $D_{1}(\pi_{g}(C_{P}(g));0)=-\frac{b_{1}}{1!}=\frac{1}{2}$. 
Hence, we have
\[
A_{1}(P;\varphi)=\frac{1}{2}\sum_{g \in \fcal(P)_{m-1}}\int_{g}\varphi, 
\]
which completes the proof. 
\end{proof}

Note that the above formula for the second term $A_{1}(P;\varphi)$ coincides with that in \eqref{BB}. 
Indeed, if $X=\mb{R}^{m}$, $\Lambda=\mb{Z}^{m}$ and $Q$ is the standard Euclidean inner product, 
the primitive inward primitive normal $\alpha_{g}$ for each facet $g$ of a Delzant polytope $P$ 
is a part of an integral basis of $\mb{Z}^{m}$. 

Next, we compute the third term, which does not seem to have been obtained before. 
For simplicity, we work in the Euclidean space $X=\mb{R}^{m}$ with the standard lattice $\mb{Z}^{m}$ and 
the standard inner product. 

\begin{cor}
\label{thirdT}
Let $P$ be a Delzant lattice polytope in the Euclidean space $(\mb{R}^{m},\mb{Z}^{m})$ with 
the standard inner product $Q$.  
Then, we have the following: 
\[
\begin{split}
A_{2}(P;\varphi)=-&\frac{1}{12}\sum_{g \in \fcal(P)_{m-1}}\frac{1}{Q(\alpha_{g},\alpha_{g})}
\int_{g}\nabla_{\alpha_{g}}\varphi \\
+&
\sum_{g \in \fcal(P)_{m-2}}
\left[
\frac{1}{4}-\frac{1}{12}
\left(\frac{Q(\alpha_{1}(g),\alpha_{2}(g))}{Q(\alpha_{1}(g),\alpha_{1}(g))}
+\frac{Q(\alpha_{1}(g),\alpha_{2}(g))}{Q(\alpha_{2}(g),\alpha_{2}(g))}
\right)
\right]
\int_{g}\varphi, 
\end{split}
\]
where, for $g \in \fcal(P)_{m-1}$, the vector $\alpha_{g}$ is the inward primitive normal to $g$, 
and for $g \in \fcal(P)_{m-2}$, the vectors $\alpha_{1}(g),\alpha_{2}(g)$ are the inward primitive normal to 
the facets $g_{1},g_{2} \in \fcal(P)_{m-1}$ such that $g=g_{1} \cap g_{2}$. 
\end{cor}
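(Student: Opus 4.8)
The plan is to set $n=2$ in Theorem \ref{AEMTh} and sort the contributions by face dimension: since $A_2(P;\varphi)=\sum_{f\in\fcal(P),\,\dim(f)\ge m-2}\int_f D_2(P;f)\varphi$, the faces that matter are $P$ itself, the facets $g\in\fcal(P)_{m-1}$, and the codimension-two faces $g\in\fcal(P)_{m-2}$. The face $P$ contributes nothing: $D_2(P;P)$ is the lift of the operator on the zero-dimensional transverse cone $\pi_P(C_P(P))=\{0\}$, which is $\dcal_2(\{0\};0)=0$ by Definition \ref{opcone}. For the other two types the transverse cone $\pi_f(C_P(f))$ is a \emph{unimodular} cone of dimension $1$ or $2$ because $P$ is Delzant, so (as in the proof of Corollary \ref{A0A1}) $D_2(P;f)$ is the lift, under $X/L(f)\cong L(f)^{\perp_{Q}}$, of the explicit operator $D_2(\,\cdot\,;0)$ given in \eqref{1DconeN} or \eqref{2Dcone2}; the only real work is to rewrite those operators in terms of the inward primitive facet normals.

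\textbf{Facets.} For $g\in\fcal(P)_{m-1}$ the relevant operator on the line $X/L(g)\cong L(g)^{\perp_{Q}}$ is, by \eqref{1DconeN}, equal to $-\frac{b_2}{2!}\nabla_{u_g}=-\frac{1}{12}\nabla_{u_g}$ (since $b_2=\tfrac16$), where $u_g$ is the primitive generator of the ray $\pi_g(C_P(g))$, given by \eqref{genPPP}. To identify $u_g$ I would take, at a vertex of $g$, a Delzant integral basis $e_1,\dots,e_m$ of $\mb{Z}^m$ with $e_1,\dots,e_{m-1}$ spanning $L(g)$ and $e_m$ the transverse edge direction. Since the standard inner product self-dualizes $\mb{Z}^m$, the dual basis $e_1^{*},\dots,e_m^{*}$ is again a $\mb{Z}$-basis of $\mb{Z}^m$, and $e_m^{*}$ is the primitive inward normal $\alpha_g$ of $g$; hence $Q(e_m,\alpha_g)=\langle e_m^{*},e_m\rangle=1$, so \eqref{genPPP} gives $u_g=\alpha_g/Q(\alpha_g,\alpha_g)$. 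This yields $D_2(P;g)=-\frac{1}{12\,Q(\alpha_g,\alpha_g)}\nabla_{\alpha_g}$, which is the first sum in the claim.

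\textbf{Codimension-two faces.} For $g\in\fcal(P)_{m-2}$, \eqref{2Dcone2} gives that $D_2(P;g)$ is the constant $\frac14+(c_1+c_2)\frac{1}{12}$, where $c_1,c_2$ are as in \eqref{ccc} for the two generators $f_1,f_2$ of the unimodular cone $\pi_g(C_P(g))\subset L(g)^{\perp_{Q}}$; concretely, at a vertex $v$ of $g$ with Delzant basis $e_1,\dots,e_m$ ($e_1,\dots,e_{m-2}$ spanning $L(g)$, $e_{m-1},e_m$ the two transverse edges) one has $f_1=p_g(e_{m-1})$, $f_2=p_g(e_m)$. For a Delzant polytope the inward primitive facet normals at $v$ form the $Q$-dual basis of the edge directions $e_1,\dots,e_m$; in particular the inward primitive normals $\alpha_1(g),\alpha_2(g)$ to the two facets $g_1,g_2\supset g$ equal $e_m^{*},e_{m-1}^{*}$, which lie in $L(g)^{\perp_{Q}}$ and satisfy $Q(\alpha_1(g),f_1)=Q(\alpha_2(g),f_2)=0$, $Q(\alpha_1(g),f_2)=Q(\alpha_2(g),f_1)=1$ — that is, $\{\alpha_1(g),\alpha_2(g)\}$ is the $Q$-dual basis of $\{f_1,f_2\}$ in $L(g)^{\perp_{Q}}$. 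Since the Gram matrix of a basis and that of its $Q$-dual are mutually inverse, a $2\times 2$ inversion yields $\frac{Q(\alpha_1(g),\alpha_2(g))}{Q(\alpha_1(g),\alpha_1(g))}=-c_2$ and $\frac{Q(\alpha_1(g),\alpha_2(g))}{Q(\alpha_2(g),\alpha_2(g))}=-c_1$, hence $c_1+c_2=-\left(\frac{Q(\alpha_1(g),\alpha_2(g))}{Q(\alpha_1(g),\alpha_1(g))}+\frac{Q(\alpha_1(g),\alpha_2(g))}{Q(\alpha_2(g),\alpha_2(g))}\right)$, so $D_2(P;g)=\frac14-\frac{1}{12}\left(\frac{Q(\alpha_1(g),\alpha_2(g))}{Q(\alpha_1(g),\alpha_1(g))}+\frac{Q(\alpha_1(g),\alpha_2(g))}{Q(\alpha_2(g),\alpha_2(g))}\right)$. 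Inserting the three computations into $A_2(P;\varphi)=\sum_{\dim(f)\ge m-2}\int_f D_2(P;f)\varphi$ gives the stated formula.

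\textbf{Main obstacle.} I expect the main difficulty to be the bookkeeping in the codimension-two step: identifying the generators $f_1,f_2$ of the transverse cone $\pi_g(C_P(g))$ with the correct edge directions, matching the two rays of that cone with the facets $g_1,g_2$ containing $g$, and verifying that the given primitive normals $\alpha_1(g),\alpha_2(g)$ coincide with the $Q$-dual basis of $\{f_1,f_2\}$ — each of these uses the Delzant hypothesis together with the self-duality of $(\mb{Z}^m,Q)$, and getting the identifications (including orientations) exactly right is where care is needed. Everything after that is the elementary inversion of a $2\times 2$ symmetric matrix.
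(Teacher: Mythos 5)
Your proposal is correct and follows essentially the same route as the paper's proof: apply Theorem \ref{AEMTh} with $n=2$, note that only faces of dimension $m-1$ and $m-2$ contribute, identify the facet operator via \eqref{1DconeN} and \eqref{genPPP} as $-\frac{1}{12\,Q(\alpha_g,\alpha_g)}\nabla_{\alpha_g}$, and evaluate the codimension-two constant from \eqref{2Dcone2} by passing to the $Q$-dual basis of the transverse cone's generators. The only difference is presentational — the paper writes out the relations $Q(u_i,u_j)\leftrightarrow Q(\alpha_i,\alpha_j)$ explicitly where you invoke the mutual inversion of Gram matrices of dual bases — and your crossed labelling of which $\alpha_i$ pairs with which edge is harmless since the final expression is symmetric in the two normals.
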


\begin{proof}
By \eqref{AEMc}, the faces which contribute to $A_{2}(P;\varphi)$ is of $m-1$ or $m-2$ dimension. 
Let $g$ be a facet of $P$. Then, $D_{n}(P;g)$ is the lift of $D_{n}(\pi_{g}(C_{P}(g));0)$. 
Hence, as before, we have 
\[
D_{n}(P;g)=(-1)^{n-1}p(n)\nabla_{u_{g}}^{n-1}=-\frac{b_{n}}{n!}\nabla_{u_{g}}^{n-1} \ \ (n \geq 1),  
\]
where the rational vector $u_{g} \in L(g)^{\perp_{Q}}$ is given in \eqref{genPPP}. 
But, we are working in the standard Euclidean space with the integral lattice $\mb{Z}^{m}$ 
and the standard inner product. Since $P$ is Delzant, we can take an integral basis $e_{1},\ldots,e_{m}$ of $\mb{Z}^{m}$ 
such that $e_{1},\ldots,e_{m-1}$ is an integral basis of $L(g) \cap \mb{Z}^{m}$ and if we denote 
the dual basis of $e_{1},\ldots,e_{m}$ by $\alpha_{1},\ldots,\alpha_{m}$, then $\alpha_{m}=\alpha_{g}$. 
Thus, we have $u_{g}=\alpha_{g}/Q(\alpha_{g},\alpha_{g})$ and hence 
\[
D_{2}(P;g)=-\frac{b_{2}}{2!}\nabla_{\alpha_{g}/Q(\alpha_{g},\alpha_{g})}=-\frac{1}{12Q(\alpha_{g},\alpha_{g})}\nabla_{\alpha_{g}}. 
\]
Next, suppose that $g$ is a face of dimension $m-2$. Take two facets $g_{1}$, $g_{2}$ such that $g=g_{1} \cap g_{2}$. 
Denote $\alpha_{i}(g) \in \Lambda$ the primitive inward normal to $g_{i}$ $(i=1,2)$. 
Let $v$ be a vertex in $g$, and take $g_{3},\ldots,g_{m} \in \fcal(P)_{m-1}$ such that $\{v\}=g_{1} \cap \cdots \cap g_{m}$. 
Let $E=\{e_{1},\ldots,e_{m}\}$ be an integral basis of $\mb{Z}^{m}$ such that each vector $v+e_{j}$ defines 
an edge incident to $v$ and $v+e_{j} \not \in g_{j}$. We have 
\[
C_{P}(g)=\mb{R}_{+}e_{1}+\mb{R}_{+}e_{2}+L(g), 
\]
and $e_{3},\ldots,e_{m}$ is an integral basis of $L(g) \cap \mb{Z}^{m}$. 
Let $\alpha_{1},\ldots,\alpha_{m}$ be the dual basis of $e_{1},\ldots,e_{m}$. 
Then $\alpha_{i}=\alpha_{i}(g)$ for $i=1,2$, and $\alpha_{1},\alpha_{2}$ form a basis of $L(g)^{\perp}$. 
We write 
\[
e_{1}=u_{1}+v_{1},\ \ e_{2}=u_{2}+v_{2},\ \ u_{1},u_{2} \in L(g)^{\perp},\ \ v_{1},v_{2} \in L(g). 
\]
Under the identification 
\[
X/L(g) \ni x+L(g) \mapsto Q(x,\alpha_{1})u_{1}+Q(x,\alpha_{2})u_{2} \in L(g)^{\perp}, 
\]
the cone $\pi_{g}(C_{P}(g))$ is identified with $\mb{R}_{+}u_{1}+\mb{R}_{+}u_{2}$ and 
the generator of $\pi_{g}(\mb{Z}^{m})$ is identified with $u_{1},u_{2}$. 
Thus, by \eqref{2Dcone2}, we have 
\begin{equation}
\label{D2D2}
D_{2}(P;g)=D_{2}(\pi_{g}(C_{P}(g));0)=
\frac{1}{4}+\frac{1}{12}
\left(
\frac{Q(u_{1},u_{2})}{Q(u_{1},u_{1})}+
\frac{Q(u_{1},u_{2})}{Q(u_{2},u_{2})}
\right). 
\end{equation}
But then it is straight forward to show that 
\[
\begin{gathered}
Q(u_{1},u_{1})=\frac{Q(\alpha_{2},\alpha_{2})}{D},\ \ 
Q(u_{2},u_{2})=\frac{Q(\alpha_{1},\alpha_{1})}{D},\ \ 
Q(u_{1},u_{2})=-\frac{Q(\alpha_{1},\alpha_{2})}{D},\\ 
D=Q(u_{1},u_{1})Q(u_{2},u_{2})-Q(u_{1},u_{2})^{2}.  
\end{gathered}
\]
From this and \eqref{D2D2}, we conclude the assertion. 
\end{proof}

\bibliographystyle{amsalpha}

\end{document}